\newtheorem{theorem}{Theorem}
\newtheorem{proposition}[theorem]{Proposition}
\newtheorem{lemma}[theorem]{Lemma}
\newtheorem{corollary}[theorem]{Corollary}
\newtheorem{definition}{Definition}
\newtheorem{ejem}{Example}%[section]
\newcommand{\N}{\mathbb{N}}
\newcommand{\R}{\mathbb{R}}
\newcommand{\KK}{\mathcal{K}}
\newcommand{\Rcupinf}{\R\cup\{+\infty\}}
\newcommand{\f}{\varphi}
\newenvironment{proof}{\paragraph{Proof.}}{\hfill$\square$}
\DeclareMathOperator{\dist}{dist}
\DeclareMathOperator{\cl}{cl}
\DeclareMathOperator{\bd}{bd}
\DeclareMathOperator{\interior}{int}
\DeclareMathOperator{\dom}{dom}
\title{A stroll in the jungle of error bounds}
\author{Trong Phong Nguyen\footnote{TSE (Universit\'{e} Toulouse I Capitole), Manufacture des Tabacs, 21 all\'{e}e de Brienne, 31015 Toulouse, Cedex 06, France. E-mail: trong-phong.nguyen@ut-capitole.fr. The author thanks the Air Force Office of Scientific Research, Air Force Material Command, USAF, under grant number FA9550-14 -1-0056 and FA9550-14-1-0500 for their partial support.}}
\begin{document}
%\chapter{Error Bounds}
	\maketitle
	% \pdfoutput=1
%\paragraph{Abstract}	
\begin{abstract}
The aim of this paper is to give a short overview on error bounds and to provide the first bricks of a unified theory. Inspired by the works of \cite{AzeCor02,BolDanLeyMaz, BolDanLew1, eb&kl,AzeCor16}, we show indeed the centrality  of the \L ojasiewicz gradient inequality. For this, we review some necessary and sufficient conditions for global/local error bounds, both in the convex and nonconvex case. We also recall some results on quantitative error bounds which play a major role in convergence rate analysis and complexity theory of many optimization methods.
\end{abstract}

\noindent {\bf Key words:} Error bounds, Kurdyka--\L ojasiewicz inequality, \L ojasiewicz function inequality, descent method.
	\section{Introduction}
Let $X$ be a Banach space. Given a  function $f\colon X\rightarrow \Rcupinf$, an error bound is an inequality that bounds the distance from an arbitrary point  in a test set to the level set in terms of the function values. More precisely, we shall say that $f$ has an error bound on a set $K\subset X$ if there exists an increasing function $\varphi\colon [0, +\infty) \rightarrow [0,+\infty), \varphi(0)=0$ such that 
\begin{equation}\label{EB}	\dist(x,[f\leq 0])\leq \varphi(f(x)),\, \forall x\in K.\end{equation}
When $K=X$ then $f$ is said to possess global error bound; otherwise we say that $f$ has a local error bound.

Error bounds have a lot of applications in many fields. They may be used to establish the rate of convergence of many optimization methods: we can think to descent methods for solving minimization problems \cite{amirbeck, DruLew, LuoTseng, LuoTse922, LuoTseng93, TseYun}, to the cyclic projection algorithm \cite{BeckTeboulle, Borwein,LMNP}, to algorithms for solving  variational inequalities, see e.g., \cite{Tse}.  In \cite{eb&kl, WangLin} the error bound theory is used to estimate the complexity of a wealth of descent methods for convex problems. Error bounds have also played a major role in the context of metric regularity \cite{Aus, Iof79, Kru15} or within the field of exact penalty functions, see e.g., \cite{Dedieu00}.

\smallskip

Let us now present the two major mathematical results that are structuring the theory of error bounds and along which we will develop our own presentation. Hoffman seems to be the first to provide an error bound in the context of optimization theory. His result concerns affine function system:
	\begin{theorem}[Hoffman, 1952]\cite{Hof} 
Let $A, B\in\R^{m\times n}$ be some matrices and $a,b$ are the vectors in $\R^m$. Assume that 
		$$S=\left\lbrace x\in \R^n|Ax\leq a, Bx=b\right\rbrace.$$
is nonempty. Then, there exists a scalar $c>0$ such that
		$$\dist(x,S)\leq c\left(\|[Ax-a]_+\|+\|Bx-b\|\right), \forall x\in \R^n.$$
	\end{theorem}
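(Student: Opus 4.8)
The plan is to argue by contradiction, combining the optimality condition for the metric projection onto $S$ with a normalization/compactness argument and the finiteness of the possible active-constraint sets. Since all norms on a finite-dimensional space are equivalent, I may assume $\|\cdot\|$ is Euclidean and that $\dist(x,S)$ is attained at the unique projection $\bar x=\Proj_S(x)$, which exists because $S$ is nonempty, closed and convex. Write $g(x)=\|[Ax-a]_+\|+\|Bx-b\|$ for the residual. If no constant worked, then for each $k\in\N$ there would be a point $x_k$ with $\dist(x_k,S)>k\,g(x_k)$; as the right-hand side is nonnegative we get $x_k\notin S$, so $d_k:=x_k-\bar x_k\neq 0$, and setting $u_k:=d_k/\|d_k\|$ we have $g(x_k)/\|d_k\|<1/k\to 0$.

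First I would record the projection optimality condition. For a polyhedron no constraint qualification is needed, so the normal cone to $S$ at $\bar x_k$ is the finitely generated (hence closed) cone $N_S(\bar x_k)=\{\sum_{i\in I_k}\lambda_i A_i^\top+B^\top\mu:\lambda_i\ge 0,\ \mu\in\R^m\}$, where $A_i$ is the $i$-th row of $A$ and $I_k=\{i:(A\bar x_k)_i=a_i\}$ is the active set. The projection characterization gives $d_k=x_k-\bar x_k\in N_S(\bar x_k)$. Since there are only finitely many possible active sets, I pass to a subsequence along which $I_k\equiv I$ is constant and (by compactness of the unit sphere) $u_k\to u$ with $\|u\|=1$. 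As $N_S(\bar x_k)=C_I$ is then a fixed closed cone containing each $u_k$, the limit satisfies $u\in C_I$, i.e. $u=\sum_{i\in I}\lambda_i A_i^\top+B^\top\mu$ with $\lambda_i\ge 0$.

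Next I would read off the constraints satisfied by the limiting direction from $g(x_k)/\|d_k\|\to 0$. Because $\bar x_k\in S$, one has $Bx_k-b=Bd_k$, hence $\|Bu_k\|=\|Bx_k-b\|/\|d_k\|\le g(x_k)/\|d_k\|\to 0$, giving $Bu=0$. For an active index $i\in I$ one has $(A\bar x_k-a)_i=0$, so $(Ax_k-a)_i=(Ad_k)_i=\|d_k\|(Au_k)_i$, whence $[(Au_k)_i]_+\le\|[Ax_k-a]_+\|/\|d_k\|\le g(x_k)/\|d_k\|\to 0$; this yields $(Au)_i\le 0$ for every $i\in I$.

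Finally I would combine the two pieces. Using the representation $u=\sum_{i\in I}\lambda_i A_i^\top+B^\top\mu$,
\[
\|u\|^2=\langle u,u\rangle=\sum_{i\in I}\lambda_i (Au)_i+\mu^\top(Bu)\le 0,
\]
since $\lambda_i\ge 0$, $(Au)_i\le 0$ for $i\in I$, and $Bu=0$. This contradicts $\|u\|=1$, establishing the existence of $c$. The step I expect to be the main obstacle is the second paragraph: one must invoke the exact polyhedral normal-cone representation (valid with no constraint qualification) and exploit the finiteness of the active-set structure to place the limiting direction in a single fixed closed cone $C_I$; once the active set is frozen, the remaining arguments are routine homogeneity computations.
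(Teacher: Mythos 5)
The paper never proves this statement: Hoffman's theorem is quoted from \cite{Hof} as a classical result, so there is no in-paper proof to compare yours against, and your argument must be judged on its own merits. It is correct and complete. The chain of steps --- Euclidean projection giving $d_k=x_k-\bar x_k\in N_S(\bar x_k)$, the exact polyhedral normal-cone description $N_S(\bar x_k)=\{\sum_{i\in I_k}\lambda_i A_i^\top+B^\top\mu:\lambda_i\ge 0,\ \mu\in\R^m\}$ (valid with no constraint qualification, by Farkas' lemma), pigeonhole on the finitely many active sets so that all $u_k$ lie in one fixed finitely generated (hence closed) cone $C_I$, the homogeneity computations giving $Bu=0$ and $(Au)_i\le 0$ for $i\in I$, and finally $\|u\|^2=\sum_{i\in I}\lambda_i(Au)_i+\mu^\top(Bu)\le 0$ contradicting $\|u\|=1$ --- is airtight; note that the two limiting facts you extract are exactly the ones needed to annihilate the normal-cone representation of $u$, and the reduction to the Euclidean norm via norm equivalence is legitimate since the claim is only the existence of some $c$. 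The one trade-off worth recording is that your proof is purely existential: the contradiction/compactness scheme gives no information whatsoever about the size of $c$, whereas the proofs in the lineage of Hoffman's original argument (decomposition into basic subsystems, or linear-programming duality) yield explicit constants in terms of the rows of $A$ and $B$; since the statement as given asserts only existence, this costs you nothing here, but it is the reason quantitative refinements of Hoffman's lemma are proved by the other route.
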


Around the same time, a very general and powerful result was provided in \cite{Loja59} for semi-algebraic functions. This result was developed as a positive response to a conjecture of Schwartz in the distribution theory of functions (see \cite{LuoPang}). Later, in \cite{Hiro}, Hironaka extended this inequality to the case of subanalytic function.
	\begin{theorem}[\L ojasiewicz, 1959]\label{loja59}\cite{Hiro, Loja59}
Let $\phi,\, \psi\colon \R^n\longrightarrow \R $ be two continuous subanalytic functions. If $\phi^{-1}(0)\subset \psi^{-1} (0)$ then for each compact, subanalytic set $K\subset \R^n$, there exist a constant $c>0$ and a integer $N$ such that
		$$ c |\psi (x)|^N \leq |\phi(x)|, \, \forall x\in K.$$	\end{theorem}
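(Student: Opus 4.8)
The plan is to reduce the statement, which is global over the compact set $K$, to a purely local comparison of the vanishing rates of $\phi$ and $\psi$ near their common zeros, and then to settle that local question by a one-dimensional Puiseux argument. First I would dispose of the region away from $Z:=\phi^{-1}(0)\cap K$. Fix any open neighbourhood $U\supset Z$. On the compact set $K\setminus U$ the continuous function $|\phi|$ does not vanish, hence $|\phi|\ge m>0$ there, while $|\psi|\le M$ on $K$ by compactness; thus $c\,|\psi(x)|^N\le |\phi(x)|$ holds on $K\setminus U$ for any $N$ as soon as $c\le m/M^N$. Consequently everything takes place near $Z$, and since $\phi^{-1}(0)\subset\psi^{-1}(0)$ forces $\psi\equiv 0$ on $Z$, both functions tend to $0$ there, so the genuine issue is the comparison of rates. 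By compactness of $Z$ it suffices to prove a local inequality $|\psi|^{N_{x_0}}\le c_{x_0}|\phi|$ on $V_{x_0}\cap K$ for each $x_0\in Z$ and then patch finitely many of them (taking the largest $N$ and a common constant, which is legitimate because $|\psi|<1$ near $Z$).

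For the local step I would introduce the auxiliary one-variable function, defined for small $t>0$ on a compact neighbourhood $W=\cl(V_{x_0})\cap K$ by
$$ \mu(t)\;=\;\inf\{\,|\phi(x)| : x\in W,\ |\psi(x)|\ge t\,\}. $$
It is nondecreasing, and, crucially, $\mu(t)>0$ for every $t>0$: the constraint set is compact and $|\phi|$ continuous, so the infimum is attained at some $x^\ast$, and $\mu(t)=0$ would give $\phi(x^\ast)=0$, whence $\psi(x^\ast)=0$ by hypothesis, contradicting $|\psi(x^\ast)|\ge t>0$. A lower bound $\mu(t)\ge c\,t^{N}$ for small $t$ is exactly what we need: for arbitrary $x\in W$ with $\psi(x)\neq 0$, setting $t=|\psi(x)|$ gives $|\phi(x)|\ge\mu(t)\ge c\,|\psi(x)|^{N}$. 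Now $\mu$ is a subanalytic function of a single variable, so on a small enough interval $(0,\delta)$ its graph is a single Puiseux branch; being positive it satisfies $\mu(t)=a\,t^{r}(1+o(1))$ with $a>0$ and $r\in\Q_{\ge 0}$, and choosing any integer $N\ge r$ yields $\mu(t)\ge c\,t^{N}$ for $t$ small. This closes the local step.

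An alternative --- and perhaps more conceptual --- route for the local step is resolution of singularities. Hironaka's theorem produces, locally, a proper analytic map under which both $\phi$ and $\psi$ pull back to monomials times nonvanishing factors; the desired inequality is then elementary for monomials, and the hypothesis $\phi^{-1}(0)\subset\psi^{-1}(0)$ translates into the comparison of exponents that makes it hold with a finite $N$. This is in fact the mechanism behind the attribution to \cite{Hiro}.

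The \emph{hard part} is not the bookkeeping above but the structure theory of subanalytic sets that it silently invokes. Two facts carry the whole argument: that proper images (equivalently, fibre-wise infima over compact subanalytic families) of subanalytic sets are again subanalytic --- this is what makes $\mu$ subanalytic and, in the planar reformulation, what lets one project $(\phi,\psi)(K)$ into $\R^2$ --- and the local finiteness of subanalytic curves giving Puiseux branches. Both ultimately rest on Hironaka's resolution (in the semialgebraic case they would follow from the elementary Tarski--Seidenberg theorem, but the jump to the subanalytic category is exactly where the depth lies). I would also be careful that continuity of $\phi,\psi$, not merely subanalyticity, is used repeatedly to guarantee compact constraint sets and the attainment of the infima defining $\mu$.
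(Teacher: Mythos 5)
The paper does not actually prove this theorem: it is quoted as a classical result of \L ojasiewicz and Hironaka \cite{Hiro, Loja59}, so there is no internal proof to compare yours against. Judged on its own, your sketch is the standard route and is essentially sound: the reduction to a neighbourhood of $Z=\phi^{-1}(0)\cap K$, the finite patching of local exponents (valid since $|\psi|<1$ near $Z$), and the conversion of the problem into a lower bound $\mu(t)\geq c\,t^{N}$ for the one-variable extremal function $\mu(t)=\inf\{|\phi(x)|:x\in W,\ |\psi(x)|\geq t\}$ are all correct, with only routine loose ends (shrink $V_{x_0}$ so that $|\psi|<\min(\delta,1)$ on it; treat the case where the constraint set is empty, i.e.\ $\mu(t)=+\infty$; cover the range of $t$ bounded away from $0$ by monotonicity of $\mu$). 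It is worth noting that this is exactly the device the paper itself uses later, in the proof of Theorem~\ref{cond}, where the function $\varphi(t)=\sup\{\dist(x,[f\leq 0]):f(x)=t\}$ is bounded via the ``Puiseux Lemma''; your $\mu$ is the same idea with $\inf$ in place of $\sup$. The caveat, which you state explicitly and correctly, is that the two load-bearing facts --- subanalyticity of $\mu$ (stability of subanalytic sets under proper projection, i.e.\ fibrewise infima over bounded subanalytic families) and the Puiseux expansion of one-variable subanalytic germs --- are invoked rather than proved, and in the subanalytic (as opposed to semialgebraic) category they carry essentially all of the depth of the theorem; so what you have is a correct reduction of the \L ojasiewicz inequality to the structure theory of subanalytic sets, not a self-contained proof. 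That is the appropriate level of rigor for a result the paper itself only cites.
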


After those pioneering works, the study of error bounds has attracted numerous researches. In 1972, under a Slater's condition and a boundedness assumption on the level sets, Robinson \cite{Rob}  extended the result of Hoffman to systems of convex differentiable inequalities. Mangasarian \cite{Man} established the same result for the maximum of finitely many differentiable convex functions. Later on, Auslender and Crouzeix \cite{Aus}, extended Mangasarian's result to non-differentiable convex functions. Some other sufficient conditions were also given by Deng in \cite{Deng97, Deng98}, by using in particular a Slater's condition on the recession function. In \cite{LewPang}, Lewis and Pang gave a characterization of Lipschitz global error bound for convex functions in terms of the directional derivatives. The work of Lewis and Pang was further generalized by Ng and Yang \cite{NgFeng1}, by Wu and Yu in \cite{WuYe01, WuYe02}, by Klatte and Li in \cite{Klatte}. In a series papers \cite{Aze, AzeCor02, AzeCor2,AzeCor14, AzeCor16}, Az\'e and Corvellec presented some characterizations of  error bounds in terms of the strong slope in the context of metric spaces. 

The first fundamental works on quantitative error bounds seem to be those of Gwozdziewicz \cite{Gwo} and Koll\'ar \cite{Kollar} for polynomial functions. Inspired by these works many researchers have tried to provide more general types of quantitative error bounds. Li, Murdokhovich and Pham \cite{LiMorPham} established a local error bound for polynomial function systems in the nonconvex case. Li \cite{Li10}, and Yang \cite{Yang} obtained some error bounds for polynomial convex functions, the work of Li was extended for piecewise convex polynomial function in \cite{Li}, which has also improved the result of Li \cite{WLi95}. In \cite{Ngai}, Ngai gave some similar results on polynomial function systems. For the quadratic function systems, Luo and Luo \cite{LuoLuo} seem to be the first to have studied global error bounds for such class, under the assumption of convexity. This work has been improved by Pang and Wang \cite{Pangwang} and later by Luo and Sturm \cite{LuoSturm} who derived a 
 global error bound for such function without assuming convexity. 
 
The connection between error bound and Kurdyka--\L ojasiewicz inequality was first settled by Bolte, Daniilidis, Ley and Mazet, in \cite{BolDanLeyMaz}. Later some of these results were improved  \cite{eb&kl, AzeCor16}. 

\medskip

This paper is organized as follows:

In Section~3, based on the results in \cite{AzeCor02,BolDanLeyMaz,AzeCor16}, we give a characterization of error bounds, specifying this result in some particular cases. We establish the connection between this result and some other previous sufficient conditions for Lipschitz error bounds.  

In Section 4, we review some results on local error bounds and  global error bound, respectively. We focus on the class of polynomial functions whose error bounds are of H\"older type, which play a major role in complexity theory of many optimization methods.
 
	\section{Preliminaries}
Let $X$ be a Banach space, $X^*$ be topological dual space and $f \colon X \longrightarrow \R$ be a lower semicontinuous function. For any $\alpha,\, \beta\in \mathbb R$, we set $[f\leq  \alpha]=\left\lbrace x\in X| f(x)\leq  \alpha\right\rbrace$, $[f=\alpha]=\left\lbrace x\in X| f(x)= \alpha\right\rbrace$, $[\alpha\leq f\leq \beta]=\left\lbrace x\in X| \alpha\leq f(x)\leq \beta \right\rbrace$, and $[\alpha]_+=\max \{\alpha,0\}$. For any subset $S\subset X$, denote $\dist(x,S)=\inf_{u\in S} \|x-u\|$, and $\bd S, \, \cl S,\, \interior S$ respectively are the boundary, closure and interior set of $S$. For $x\in X, \,\delta>0$, set $B_\delta(x)=\{y\in X|\dist(x,y)<\delta \}$.

The Fr\'echet subdifferential of $f$ at $x\in \dom f$, denote $\partial^F f(x)$, is defined by
 $$\partial^F f(x)=\left\lbrace u\in X^*| \liminf_{y\to x}\frac{f(y)-f(x)-\langle u,y-x\rangle}{\|y-x\|}\geq 0\right\rbrace.$$
 The limiting-subdifferential of $f$ at $x\in \dom f$, written $\partial f(x)$ is defined as follows
 $$\partial f(x)=\left\lbrace u\in X^*|\exists x_k\rightarrow x,\, f(x_k)\rightarrow f(x),\, u_k\in \partial^F f(x_k)\rightarrow u\right\rbrace.$$
And
$$f'(x,d)=\liminf_{t\to 0^+}\frac{f(x+td)-f(x)}{t},$$
is called the derivative of $f$ at $x$ in the direction $d\in X$.

The strong slope of $f$ at $x$ is given by
$$|\nabla f|(x)=\begin{cases}
0&\text{ if $x$ is a local minimum point of $f$,}\notag\\
\limsup_{y\to x}\frac{f(x)-f(y)}{\|x-y\|}& \text{ otherwise}.\notag
\end{cases}$$
It is easy to see that
\begin{itemize}
	\item $\|d\| |\nabla f|(x)\geq -f'(x,d),\:\: \forall (x,d)\in X^2$.
	\item $|\nabla f| (x)\leq \dist(0,\partial^F f(x)), \:\: \forall x\in X.$
%	\item $\displaystyle |\nabla f| (x)\geq \dist(0,\partial f(x)), \forall x\in X$, see \cite[Remark 6.1]{AzeCor14}.
\end{itemize}
We recall the chain rule for the strong slope.
\begin{lemma}\cite{AzeCor16}\label{chain}
Let $-\infty<\alpha<\beta \leq +\infty $ and a function $\f\colon ]\alpha, \beta [\rightarrow \R$ with $\f\in C^1(\alpha,\beta)$ and $\f'(s)>0,\, \forall s\in ]\alpha,\beta[$. One has
$$|\nabla (\f\circ f)|(x)=\f'(f(x))|\nabla f|(x), \, \forall x\in [\alpha <f<\beta].$$	
\end{lemma}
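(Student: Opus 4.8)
The plan is to reduce to the essential computation of two $\limsup$'s and to pass between them with the mean value theorem for $\f$, using lower semicontinuity of $f$ to keep the relevant level values under control. First I would dispose of the trivial ingredients. Since $\f'(s)>0$ on $]\alpha,\beta[$, the map $\f$ is strictly increasing; hence for a fixed $x\in[\alpha<f<\beta]$ the point $x$ is a local minimum of $f$ if and only if it is a local minimum of $\f\circ f$. In that case both $|\nabla(\f\circ f)|(x)$ and $|\nabla f|(x)$ vanish by definition, and the asserted identity holds because its right-hand side is $\f'(f(x))\cdot 0=0$. So from now on assume $x$ is not a local minimum of $f$; then it is not a local minimum of $\f\circ f$ either, and both slopes are given by the $\limsup$ formula. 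Write $s=|\nabla f|(x)\in[0,+\infty]$, $L=|\nabla(\f\circ f)|(x)\in[0,+\infty]$ and $p=\f'(f(x))>0$; the goal is $L=ps$, which I would establish by proving the two inequalities separately.

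For $L\ge ps$ (nontrivial only when $s>0$) I would pick $y_k\to x$, $y_k\neq x$, realizing $\frac{f(x)-f(y_k)}{\|x-y_k\|}\to s$. Discarding finitely many terms I may assume this quotient is positive, i.e. $f(y_k)<f(x)$. The crucial point is that $f(y_k)\to f(x)$: lower semicontinuity gives $\limsup_k\big(f(x)-f(y_k)\big)\le 0$, while $f(x)-f(y_k)>0$, so $f(x)-f(y_k)\to 0$. This argument works even when $s=+\infty$, which is exactly why lower semicontinuity is needed rather than mere boundedness of the quotient. Now $f(y_k)<f(x)$ keeps us inside $]\alpha,\beta[$, so the mean value theorem yields $\xi_k$ between $f(y_k)$ and $f(x)$ with $\f(f(x))-\f(f(y_k))=\f'(\xi_k)\big(f(x)-f(y_k)\big)$, and $\xi_k\to f(x)$ forces $\f'(\xi_k)\to p$ by continuity of $\f'$. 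Dividing by $\|x-y_k\|$ and passing to the limit gives $\frac{\f(f(x))-\f(f(y_k))}{\|x-y_k\|}\to ps$, whence $L\ge ps$.

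For $L\le ps$ I would argue symmetrically, starting from a sequence $z_k\to x$, $z_k\neq x$, realizing $\frac{\f(f(x))-\f(f(z_k))}{\|x-z_k\|}\to L$; I may assume $L>0$ and hence, since $\f$ is strictly increasing, $f(z_k)<f(x)$. The same lower-semicontinuity estimate gives $f(z_k)\to f(x)$, the mean value theorem produces $\eta_k\to f(x)$ with $\f'(\eta_k)\to p$, and the identity $\frac{\f(f(x))-\f(f(z_k))}{\|x-z_k\|}=\f'(\eta_k)\,\frac{f(x)-f(z_k)}{\|x-z_k\|}$ lets me conclude $L=\lim_k\f'(\eta_k)\,\frac{f(x)-f(z_k)}{\|x-z_k\|}\le p\,\limsup_k\frac{f(x)-f(z_k)}{\|x-z_k\|}\le ps$, using $p>0$ and the definition of $s$ as the $\limsup$ over all $y\to x$. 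The main obstacle, and the only place where any care is required, is precisely the claim $f(y_k)\to f(x)$ (resp. $f(z_k)\to f(x)$): without it the intermediate point $\xi_k$ need not approach $f(x)$ and $\f'(\xi_k)$ cannot be replaced by $p$. Restricting to the sign $f(\cdot)<f(x)$ together with lower semicontinuity handles this uniformly, including the delicate case of an infinite slope.
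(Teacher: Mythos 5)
The paper itself gives no proof of this lemma: it is recalled verbatim from \cite{AzeCor16} as a known chain rule for the strong slope, so there is no internal argument to measure yours against; what follows is an assessment of your proof on its own terms. Your proof is correct and complete. The reduction to the non-local-minimum case (via strict monotonicity of $\varphi$), the two inequalities $L\ge ps$ and $L\le ps$ obtained from sequences realizing the respective $\limsup$'s, and the mean value theorem step are all sound, and you put your finger on the real crux: restricting to points with $f(y_k)<f(x)$ and invoking lower semicontinuity forces $f(y_k)\to f(x)$, hence $\varphi'(\xi_k)\to\varphi'(f(x))$, and this works uniformly, including when the slope is $+\infty$. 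Two details are glossed over, both harmless but worth recording. First, $\varphi\circ f$ is only defined on $[\alpha<f<\beta]$, so the $\limsup$ defining $|\nabla(\varphi\circ f)|(x)$ can only range over such points; lower semicontinuity gives a whole neighborhood of $x$ on which $f>\alpha$, while points with $f(y)\ge\beta$ produce negative difference quotients and therefore influence neither $\limsup$ (both being nonnegative at a non-minimum point) --- this observation is also what makes your claim that ``$x$ is a local minimum of $f$ if and only if it is a local minimum of $\varphi\circ f$'' legitimate despite the domain mismatch. Second, in the direction $L\ge ps$ you need $f(y_k)>\alpha$ eventually for $\varphi(f(y_k))$ to be defined; this follows from $f(y_k)\to f(x)>\alpha$, not merely from $f(y_k)<f(x)$ as your phrase ``keeps us inside $]\alpha,\beta[$'' suggests. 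Neither point requires any new idea, so the argument stands as a self-contained proof of the cited lemma.
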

As mentioned in the introduction, the theory of error bound can be developed, based on the theory of \L ojasiewicz on the subanalytic function. Let us recall the definition of such function class. 
\begin{definition}\cite{BolDanLew1}
\begin{description}
	\item[(i)] A function $f\colon\R^n \rightarrow \R$ is real-analytic on $S\subset \R^n$ if it can be represented locally on $S$ by a convergent power series, this means that, for any $\bar{x}=(\bar{x}_1,\ldots,\bar{x}_n)\in S$, there exists a neighborhood $U(\bar{x})$ such that 
	$$f(x)=\sum_{i_1,\ldots,i_n=0}^\infty a_{i_1,\ldots,i_n}(x_1-\bar{x}_1)^{i_1}\ldots (x_n-\bar{x}_n)^{i_n}, \,\forall x\in U(\bar{x}).$$ 
	\item[(ii)]
	A subset $S$ of $\R^n$ is called semianalytic if for each point $\bar{x}\in \R^n$ admits a neighborhood $U(\bar{x})$ for which $S\cap U(\bar{x})$ is represented in the following form
	$$S\cap U(\bar{x})=\bigcup_{i=1}^p\bigcap_{j=1}^q \left\lbrace x\in \R^n|f_{ij}(x)=0,\, g_{ij}(x)>0\right\rbrace,$$
where the functions $f_{ij},\,g_{ij}\colon\R^n\rightarrow \R$ are real-analytic for all $1\leq i \leq p,\, 1\leq j \leq q$. If the graph of $f$ is a semianalytic set in $\R^{n+1}$, we say that $f$ is a semianalytic function.

\item[(iii)] $S$ is called subanalytic if each point $\bar{x}\in \R^n$, there exist a neighborhood $U(\bar x)$ and a bounded semianalytic set $A\subset \R^{n+m}$, (for some $m\in \N^*$) such that $S\cap U(\bar{x})$ is the projection on $\R^n$ of $A$. The function $f$ is subanalytic if its graph is a subanalytic set in $\R^{n+1}$.
\end{description}	
\end{definition}
We give some elementary properties of subanalytic function and subanalytic set, see\cite{BolDanLew1}.
\begin{enumerate}
	\item If $S$ is subanalytic set then so are its boundary $\bd S$, its closure $\cl S$, its interior $\interior S$, and its complement set.
	\item The class of subanalytic sets is closed under finite union and intersection. The distance function to a subanalytic set is a subanalytic function.
	\item The image of a bounded subanalytic set under a subanalytic map is
	subanalytic. The inverse image of a subanalytic set under a subanalytic map is a subanalytic set.
	\item When $S$ is a closed, convex subanalytic set, the Euclidean
	projector onto $S$ is a subanalytic function.
\end{enumerate}
In this work, we focus on the H\"older-type error bound, which is very common in practice. 
\begin{definition}
Let $f$ be a function on the Banach space $X$ and $K$ be a subset of $X$. We say that $f$ admits a
\begin{enumerate}
	\item H\"older-type error bound on $K$ if there exists $\tau>0$ and $ a, b>0$ such that 
	\begin{equation}\label{Hoder}
	\dist(x,[f\leq 0])\leq  \tau\left( [f(x)]_+^a+[f(x)]_+^b\right),\quad \forall x \in K. 
		\end{equation}
	\item Lipschitz-type (or linear) error bound on $K$ if the inequality (\ref{Hoder}) holds with $a=b=1$, for all $x\in K$.
\end{enumerate}	
When $K\equiv X$ then $f$ is said to have a global error bound, otherwise we say that $f$ possesses a local error bound.	
\end{definition}

\section{Characterization of error bounds}
\subsection{Characterizing error bounds through Kurdyka--\L ojasiewicz ine\-qua\-lity}

The \L ojasiewicz gradient inequality was introduced in \cite{Loja63}. Let $f\colon\R^n\rightarrow \R$ be an analytic function. For any $\bar{x}\in \dom f$, there exist $\tau>0,\,\theta \in [0,1)$ and a neighbourhood $U(\bar{x})$ such that
	$$\|\nabla f(x)\|\geq \tau \left|f(x)-f(\bar{x})\right|^\theta, \, \forall x\in U(\bar{x}).$$
In \cite{Kur98}, Kurdyka generalized the above result to the class of $C^1$ functions whose graphs belong to an o-minimal structure (the definition of o-minimal structure can be seen in \cite[Definition 1]{Kur98}, \cite[Definition 6]{BolDanLewShi07}), this result was extended to the nonsmooth class by Bolte,  Danillidis, Lewis and Shiota \cite{BolDanLewShi07}. The corresponding generalized \L ojasiewicz gradient inequality is called the Kurdyka--\L ojasiewicz (KL inequality for short) inequality. In addition, the generalization for the class nonsmooth subanalytic functions has been obtained by Bolte, Danillidis and Lewis in \cite{BolDanLew1}. This has opened the road to many theoretical and algorithmic developments (see \cite{AbsMahAnd,attbol,AttBolRedSou, AttBolSva, BST,eb&kl,PierreGuiJuan}). We summarize the above extension by the following theorem.
\begin{theorem}\cite{BolDanLewShi07, BolDanLew1}
 Let $f\colon \R^n\rightarrow \R$ be a definable function in an arbitrary o-minimal structure over $\R^n$. Then for all $\bar{x}\in\dom f$, there exist $\delta>0$ and a neighbourhood $U(\bar{x})$ of $\bar{x}$ such that 
$$\varphi'\left( f(x)-f(\bar{x})\right)\dist\left(0, \partial f(x)\right)\geq 1,\, \forall x\in U(\bar{x})\cap [f(\bar{x})<f<f(\bar{x})+\delta],$$
where $\varphi\colon[0,\delta]\rightarrow [0,+\infty)$ is an increasing function, which vanishes at zero and $\varphi\in C^0[0,\delta]\cap C^1(0,\delta)$. The class of such functions $\varphi$ will be denoted by $\KK[0,\delta]$.
\end{theorem}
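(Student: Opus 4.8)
The plan is to collapse the statement to a one-dimensional integrability estimate: I will build $\varphi$ as the primitive of the reciprocal of a ``slope along the level sets'' and the entire difficulty will sit in showing that this reciprocal is integrable at the origin, a fact I will extract from the finiteness of the length of a definable curve. Translating so that $\bar x=0$ and $f(\bar x)=0$, fix a small closed ball $\bar B=\cl B_\varepsilon(0)$ and set, for $r\in(0,\delta)$,
$$m(r)=\inf\bigl\{\dist(0,\partial f(x)) : x\in \bar B,\ f(x)=r\bigr\}.$$
First I would check that $m$ is a well-defined definable function, positive for small $r$. Definability rests on the (nontrivial) fact that, for a definable $f$, the graph of the limiting subdifferential $\partial f$ is definable in the same structure; hence $x\mapsto\dist(0,\partial f(x))$ is definable and so is its infimum over the definable family $[f=r]\cap\bar B$. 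Positivity comes from the o-minimal Sard theorem: the set of critical values $\{r:\exists x,\ 0\in\partial f(x),\ f(x)=r\}$ is finite, so after shrinking $\delta$ every $r\in(0,\delta)$ is noncritical; since $[f=r]\cap\bar B$ is compact and $x\mapsto\dist(0,\partial f(x))$ is lower semicontinuous (the limiting subdifferential is outer semicontinuous), the infimum is attained and strictly positive.

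The Monotonicity Theorem then makes $m$ continuous and monotone on $(0,\delta)$ after one more shrinking, so $1/m$ is continuous there and the only issue is its integrability at $0$, which is the heart of the proof. I would use definable choice to select a definable curve $\gamma\colon(0,\delta)\to\bar B$, the \emph{talweg}, with $f(\gamma(r))=r$ and $\dist(0,\partial f(\gamma(r)))=m(r)$. To bound $\gamma'$ from below I pass to a $C^1$ definable stratification of $\bar B$ on which $f$ is $C^1$ along each stratum: being definable and one-dimensional, $\gamma$ meets the strata in finitely many subarcs, and on a subarc contained in a stratum $S$ the velocity $\gamma'(r)$ is tangent to $S$, so the smooth chain rule gives $\langle\nabla_S f(\gamma(r)),\gamma'(r)\rangle=1$ and hence $1\leq\|\nabla_S f(\gamma(r))\|\,\|\gamma'(r)\|$. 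The crucial projection formula of the nonsmooth theory, namely that the orthogonal projection onto $T_xS$ of every limiting subgradient equals $\nabla_S f(x)$, yields $\|\nabla_S f(x)\|\leq\dist(0,\partial f(x))$, whence $1\leq m(r)\,\|\gamma'(r)\|$. Integrating over $(0,\delta)$,
$$\int_0^\delta\frac{dr}{m(r)}\leq\int_0^\delta\|\gamma'(r)\|\,dr=\length(\gamma)<+\infty,$$
the finiteness holding because a bounded definable curve has finite length (each coordinate is piecewise monotone with finitely many pieces, hence of bounded variation).

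With integrability secured I set $\varphi(r)=\int_0^r ds/m(s)$ for $r\in[0,\delta]$. Then $\varphi(0)=0$, $\varphi$ is increasing (the integrand is positive), $\varphi\in C^0[0,\delta]$ by integrability, and $\varphi\in C^1(0,\delta)$ with $\varphi'=1/m$; thus $\varphi\in\KK[0,\delta]$. Finally, taking $U(\bar x)=B_\varepsilon(0)$, for any $x$ in this neighbourhood with $0<f(x)<\delta$ and $r=f(x)$ the definition of $m$ gives $\dist(0,\partial f(x))\geq m(r)$, so
$$\varphi'(f(x))\,\dist(0,\partial f(x))\geq\frac{1}{m(r)}\,m(r)=1,$$
which (after undoing the normalization) is the announced inequality.

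The step I expect to resist most is the integrability estimate, and within it the passage to the stratification together with the projection formula: producing a genuinely definable talweg, and above all controlling the derivative $\tfrac{d}{dr}f(\gamma(r))=1$ by the \emph{minimal-norm} subgradient rather than an arbitrary one, is exactly where the nonsmooth theory is delicate. Definability of $\Graph\partial f$, the o-minimal Sard theorem, and the finite length of definable curves are the remaining substantial inputs, but they are by now standard pillars of the o-minimal toolbox.
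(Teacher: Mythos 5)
The paper contains no proof to compare against: this theorem is quoted verbatim as a known result of \cite{BolDanLewShi07, BolDanLew1} (building on Kurdyka \cite{Kur98}), and the survey moves on immediately. What you have written is, in substance, a faithful reconstruction of the proof in those cited references: the level-set infimum $m(r)$, the definable talweg selection, the finite length of a bounded definable curve as the source of integrability of $1/m$, and the primitive $\varphi(r)=\int_0^r ds/m(s)$ are exactly Kurdyka's argument, nonsmoothed through the stratification and projection machinery of Bolte--Daniilidis--Lewis--Shiota. The architecture is sound, and you correctly identify the load-bearing external inputs (definability of $\Graph\partial f$, the definable Sard theorem, definable choice, finite length of definable curves).

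Two points need tightening before the sketch is complete. First, the projection formula you invoke --- $P_{T_xS}\,v=\nabla_S f(x)$ for \emph{every} $v\in\partial f(x)$ --- is not true for an arbitrary $C^1$ stratification on whose strata $f$ is smooth. For the Fr\'echet subdifferential it is elementary (restrict the defining liminf to directions inside the stratum), but the theorem, and your function $m(r)$, involve the \emph{limiting} subdifferential, whose elements arise as limits of Fr\'echet subgradients at points lying in other strata; to project such limits onto $T_xS$ one needs Whitney condition (a) for the stratification (always arrangeable in the definable category, and assumed in \cite{BolDanLewShi07}). Note that working with $\partial^F f$ throughout would not rescue you: since $\partial^F f(x)\subset\partial f(x)$, one has $\dist(0,\partial f(x))\leq\dist(0,\partial^F f(x))$, so the inequality with the limiting subdifferential is the strictly stronger statement and the projection formula must hold at that level. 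Second, your talweg presumes the infimum defining $m(r)$ is attained; the compactness-plus-lower-semicontinuity argument you give uses that $[f=r]\cap\cl B_\varepsilon(0)$ is compact and that $x\mapsto\dist(0,\partial f(x))$ is lower semicontinuous, both of which require $f$ continuous (the references work with lsc $f$ and argue on the graph). If attainment fails, replace the exact talweg by a definable selection satisfying $\dist\left(0,\partial f(\gamma(r))\right)\leq 2m(r)$, which costs only a harmless factor $2$ in $\varphi$; likewise, since definable choice yields a possibly discontinuous selection, the length bound should be read piecewise over the finitely many $C^1$ arcs of $\gamma$. None of this changes your route --- it is the route of the papers being cited --- but these are precisely the places where the nonsmooth o-minimal theory earns its keep.
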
   
The connection between error bounds and Kurdyka--\L ojasiewicz inequality was established in \cite{BolDanLeyMaz} (see also \cite{eb&kl}), this was further improved by Az\'e and Corvellec \cite{AzeCor16}.

Az\'e and Corvellec have series of researches on the characterization of global error bounds for lower semicontinuous functions in terms of the strong slope, see \cite{AzeCor02, AzeCor2, AzeCor14, AzeCor16, corv}. These works are of great help for this section. Let us now give the result in \cite{AzeCor02}, in which, the authors used Ekeland's variational principle to establish the connection between the strong slope and the linear error bound.

\begin{theorem}\cite{AzeCor02}\label{AzeCorLinear}
Let $f\colon X\rightarrow \R\cup \{+\infty\}$ be a lower semicontinuous function, and $-\infty<\alpha<\beta\leq \infty$. Then
$$\inf_{x\in [\alpha<f<\beta]} |\nabla f|(x)=\inf_{\alpha\leq \gamma<\beta}\left(\inf_{x\in [\gamma<f<\beta]}\frac{f(x)-\gamma}{\dist(x,[f\leq \gamma])}\right).$$
\end{theorem}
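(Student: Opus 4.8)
The plan is to prove the stated identity as two separate inequalities; write $s:=\inf_{x\in[\alpha<f<\beta]}|\nabla f|(x)$ for the left-hand side and $e$ for the right-hand side. The inequality $e\le s$ I would obtain directly from the definition of the strong slope, whereas $s\le e$ I would obtain from Ekeland's variational principle, exactly as announced by the sentence preceding the statement.

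For $e\le s$, fix any $z\in[\alpha<f<\beta]$ and set $s_0:=|\nabla f|(z)$. The idea is to test the right-hand side with $x:=z$ and $\gamma:=f(z)-t$ for small $t>0$: then $\gamma\in[\alpha,\beta)$ and $z\in[\gamma<f<\beta]$ as soon as $t<f(z)-\alpha$, so this pair is admissible. By definition of the strong slope, for every $\varepsilon>0$ there is $\delta_0>0$ with $f(y)\ge f(z)-(s_0+\varepsilon)\|y-z\|$ whenever $\|y-z\|<\delta_0$. I would then bound $\dist(z,[f\le\gamma])$ from below by a dichotomy: a competitor $y\in[f\le\gamma]$ with $\|y-z\|<\delta_0$ satisfies $(s_0+\varepsilon)\|y-z\|\ge f(z)-f(y)\ge t$, hence $\|y-z\|\ge t/(s_0+\varepsilon)$, while any competitor with $\|y-z\|\ge\delta_0$ is automatically at distance $\ge t/(s_0+\varepsilon)$ once $t$ is small. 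Thus $\dist(z,[f\le\gamma])\ge t/(s_0+\varepsilon)$ (the bound being vacuous, hence trivial, if the level set is empty), and the tested ratio is at most $s_0+\varepsilon$. Letting $\varepsilon\to0$ gives $e\le s_0=|\nabla f|(z)$, and taking the infimum over $z$ yields $e\le s$.

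For $s\le e$ it suffices to show $c\le e$ for every $c<s$. Fix such a $c$ (so that $|\nabla f|(z)>c$ for all $z\in[\alpha<f<\beta]$), pick $\gamma\in[\alpha,\beta)$ with $[f\le\gamma]\ne\emptyset$ and $x\in[\gamma<f<\beta]$, and argue by contradiction, assuming $\dist(x,[f\le\gamma])>\big(f(x)-\gamma\big)/c$. I would apply Ekeland's variational principle on the complete metric space $M:=[f\ge\gamma]$ (closed because $f$ is lower semicontinuous, and bounded below by $\gamma$ there), to $x\in M$ with $\varepsilon_0:=f(x)-\gamma$ and $\lambda_0:=(f(x)-\gamma)/c$, so that $\varepsilon_0/\lambda_0=c$. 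This yields $z\in M$ with $\|z-x\|\le\lambda_0$, $f(z)\le f(x)<\beta$, and $f(w)>f(z)-c\|w-z\|$ for all $w\in M\setminus\{z\}$. Since $\|z-x\|\le\lambda_0<\dist(x,[f\le\gamma])$, the point $z$ cannot lie in $[f\le\gamma]$, whence $f(z)>\gamma$ and $z\in[\alpha<f<\beta]$.

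The crux, and the step I expect to be the main obstacle, is to convert the Ekeland inequality, which a priori only compares $f(z)$ with competitors inside $M$, into a bound on the strong slope of $f$ computed over all of $X$. Here I would use that $f(z)>\gamma$ together with lower semicontinuity to produce a neighbourhood $V$ of $z$ on which $f>\gamma$, so that $V\subseteq M$; for $w\in V\setminus\{z\}$ the Ekeland inequality then gives $\big(f(z)-f(w)\big)/\|w-z\|<c$, and passing to the limsup yields $|\nabla f|(z)\le c$, contradicting $|\nabla f|(z)>c$ (the case where $z$ is a local minimum being immediate, since then $|\nabla f|(z)=0\le c$). This contradiction proves $\dist(x,[f\le\gamma])\le(f(x)-\gamma)/c$, i.e. the tested ratio is $\ge c$; taking infima gives $e\ge c$, and letting $c\uparrow s$ gives $s\le e$. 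I would finally dispose of the degenerate configurations, namely empty strips or levels $\gamma$ with $[f\le\gamma]=\emptyset$, with the convention $\inf\emptyset=+\infty$, checking case by case that both sides collapse consistently (for instance an empty level set at an admissible $\gamma$ forces, through Ekeland applied on all of $X$, slopes arbitrarily close to $0$ in the strip, which matches the vanishing of the corresponding ratio).
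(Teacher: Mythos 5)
Your two-inequality strategy (the easy direction by testing with $\gamma=f(z)-t$, the hard direction via Ekeland's variational principle) is exactly the route the paper points to: the paper itself gives no proof of this theorem, only the attribution to \cite{AzeCor02} together with the remark that Ekeland's principle is the key tool, and your first inequality as well as your ``crux'' localization step are sound. However, there is one genuine error in the hard direction: you apply Ekeland's principle on $M:=[f\ge\gamma]$, asserting that $M$ is ``closed because $f$ is lower semicontinuous.'' That is false. Lower semicontinuity makes the sublevel sets $[f\le\gamma]$ closed, equivalently the strict superlevel sets $[f>\gamma]$ open; it does \emph{not} make $[f\ge\gamma]$ closed. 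For instance, $f\colon\R\to\R$ with $f(0)=-1$ and $f\equiv 0$ elsewhere is lower semicontinuous, yet $[f\ge 0]=\R\setminus\{0\}$ is not closed, hence not a complete metric space. Since Ekeland's principle genuinely requires completeness, the step as written is unjustified; note also that passing to $\cl M$ does not repair it, because $f$ need not be bounded below by $\gamma$ (nor bounded below at all) on $\cl M$.

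The standard repair --- and, in substance, what Az\'e and Corvellec do --- is to apply Ekeland on all of $X$ (complete, since $X$ is a Banach space) to the truncated function $f_\gamma:=\max\{f,\gamma\}$, which is lower semicontinuous and satisfies $\inf_X f_\gamma\ge\gamma$, with your parameters $\varepsilon_0=f(x)-\gamma$ and $\lambda_0=(f(x)-\gamma)/c$; indeed $f_\gamma(x)=f(x)\le\inf_X f_\gamma+\varepsilon_0$, so Ekeland yields $z\in X$ with $\|z-x\|\le\lambda_0$, $f_\gamma(z)\le f_\gamma(x)$, and
$$f_\gamma(w)>f_\gamma(z)-c\,\|w-z\|,\qquad \forall w\ne z.$$
Your own argument then finishes verbatim: $\|z-x\|\le\lambda_0<\dist\left(x,[f\le\gamma]\right)$ forces $f(z)>\gamma$, hence $f(z)=f_\gamma(z)\le f(x)<\beta$ and $z\in[\alpha<f<\beta]$; moreover $f_\gamma$ coincides with $f$ on the open neighbourhood $[f>\gamma]$ of $z$, so the displayed inequality gives $|\nabla f|(z)\le c$, the desired contradiction. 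With this single substitution the proof is correct; everything else, including your treatment of empty level sets (where you do, correctly, invoke Ekeland on the complete space $X$ itself, using that $f>\gamma$ everywhere in that case), goes through.
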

We rewrite the latter theorem as a characterization of linear global error bound, which is a well known result since Ioffe's pioneering works \cite{Iof00}.
% {\color{red} verify and quote}
\begin{theorem}\label{Linear}\cite{AzeCor02}
	Let $\tau>0$, the following assertions are equivalent
	\begin{description}
		\item[(i)] $|\nabla f|(x)\geq \frac{1}{\tau},\, \forall x\in [\alpha<f<\beta].$
	\item[(ii)] $\tau \left(f(x)-\gamma\right)\geq \dist\left(x,[f\leq \gamma]\right), \, \forall \gamma \in [\alpha, \beta),\,x\in [\gamma<f<\beta].$
	\end{description}
\end{theorem}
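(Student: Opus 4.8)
The plan is to deduce this equivalence directly from the slope--distance identity of Theorem~\ref{AzeCorLinear}, so that no fresh variational argument is required. Writing
$$m:=\inf_{x\in[\alpha<f<\beta]}|\nabla f|(x),\qquad M:=\inf_{\alpha\leq\gamma<\beta}\Big(\inf_{x\in[\gamma<f<\beta]}\frac{f(x)-\gamma}{\dist(x,[f\leq\gamma])}\Big),$$
Theorem~\ref{AzeCorLinear} asserts $m=M$. Assertion \textbf{(i)} is by definition the statement that $|\nabla f|(x)\geq 1/\tau$ at every admissible $x$, i.e. $m\geq 1/\tau$. Hence the whole task reduces to showing that \textbf{(ii)} is equivalent to $M\geq 1/\tau$.

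First I would recast the product inequality in \textbf{(ii)} as a lower bound on a ratio. Fix $\gamma\in[\alpha,\beta)$ and $x\in[\gamma<f<\beta]$. The crucial observation is that $\dist(x,[f\leq\gamma])>0$ for every such $x$: since $f$ is lower semicontinuous, the sublevel set $[f\leq\gamma]$ is closed, so $\dist(x,[f\leq\gamma])=0$ would force $x\in[f\leq\gamma]$, that is $f(x)\leq\gamma$, contradicting $f(x)>\gamma$. With the denominator strictly positive, the inequality $\tau(f(x)-\gamma)\geq\dist(x,[f\leq\gamma])$ is equivalent to $\tfrac{f(x)-\gamma}{\dist(x,[f\leq\gamma])}\geq\tfrac{1}{\tau}$.

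Next I would pass from the pointwise statement to the infimum. The ratio inequality above holds for all admissible $\gamma$ and $x$ precisely when the ratio is bounded below by $1/\tau$ uniformly, which is exactly $M\geq 1/\tau$ (the convention that the infimum over an empty index set is $+\infty$ handles the degenerate levels for free). Combining this with the identity $m=M$ from Theorem~\ref{AzeCorLinear} and the reading of \textbf{(i)} as $m\geq 1/\tau$, the two assertions coincide.

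The only genuine obstacle, slight as it is, sits in the positivity of the distance: this is the single point where lower semicontinuity is actually used, and it is what makes dividing by $\dist(x,[f\leq\gamma])$ legitimate and the reformulated ratio inequality faithful to the original product inequality. Once that is secured, the equivalence is a purely formal consequence of Theorem~\ref{AzeCorLinear}, requiring no slope estimate or application of Ekeland's principle beyond what that theorem already encapsulates.
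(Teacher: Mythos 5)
Your proposal is correct and follows essentially the same route as the paper, which introduces Theorem~\ref{Linear} precisely as a rewriting of the slope--distance identity of Theorem~\ref{AzeCorLinear}: reading (i) as $m\geq 1/\tau$, (ii) as $M\geq 1/\tau$, and invoking $m=M$. Your explicit justification of the pointwise product-to-ratio reformulation (positivity of the distance via closedness of $[f\leq\gamma]$ under lower semicontinuity) is exactly the detail the paper leaves implicit.
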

For any $\f \in \KK [0,\beta-\alpha]$, thanks to Lemma~\ref{chain}, we can apply the latter result for the function  $x\mapsto\varphi\left(f(x)-\alpha\right)$, therefore we obtain a nonlinear version of Theorem~\ref{Linear}.% which is given as following.
 \begin{theorem}\label{AzeCor}
Assume that $\varphi\in \KK[0,\beta-\alpha)$. The following statements are equivalent
\begin{description}
	\item[(i)] $\varphi'(f(x)-\alpha)|\nabla f|(x)\geq 1,\,\forall x\in [\alpha<f<\beta].$
	\item[(ii)] $\varphi \left(f(x)-\alpha\right)\geq \f(\gamma -\alpha)+\dist\left(x,[f\leq \gamma]\right), \,\forall \gamma\in [\alpha,\beta], \, \forall x\in [\gamma<f<\beta].$
	\end{description}	
\end{theorem}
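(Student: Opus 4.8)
The plan is to derive Theorem~\ref{AzeCor} from the linear characterization of Theorem~\ref{Linear} by composing $f$ with $\varphi$. Concretely, I would introduce the auxiliary function $g:=\bar\varphi\circ(f-\alpha)$, where $\bar\varphi\colon\R\cup\{+\infty\}\to[0,+\infty]$ is the nondecreasing extension of $\varphi$ given by $\bar\varphi(t)=0$ for $t\le 0$, $\bar\varphi(t)=\varphi(t)$ for $t\in[0,\beta-\alpha]$, and $\bar\varphi(t)=\varphi(\beta-\alpha)$ for $t\ge\beta-\alpha$ (the last value possibly $+\infty$, in which case one passes to finite truncations). Since $\varphi$ is continuous, vanishes at $0$, and is strictly increasing because $\varphi'>0$ on the open interval, the extension $\bar\varphi$ is continuous and nondecreasing, so $g$ is again lower semicontinuous on $X$; this is exactly what makes Theorem~\ref{Linear} applicable to $g$. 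The role of the extension is purely to provide a globally defined lsc function whose sublevel sets match those of $f$, while all the analytic content stays on $[\alpha<f<\beta]$.

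First I would record the dictionary linking $g$ and $f$. Writing $\beta':=\varphi(\beta-\alpha)=\sup g$, strict monotonicity of $\varphi$ yields the set identity $[\alpha<f<\beta]=[0<g<\beta']$, and for every $\gamma\in[\alpha,\beta)$, setting $\gamma':=\varphi(\gamma-\alpha)\in[0,\beta')$ gives $[f\le\gamma]=[g\le\gamma']$ and $[\gamma<f<\beta]=[\gamma'<g<\beta']$. Moreover $\gamma\mapsto\gamma'=\varphi(\gamma-\alpha)$ is a bijection of $[\alpha,\beta)$ onto $[0,\beta')$, so quantifying over $\gamma$ and over $\gamma'$ amounts to the same thing.

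Next I would transfer the slope and apply the linear result. On $[\alpha<f<\beta]$ the function $g$ coincides with $\varphi\circ(f-\alpha)$ with $\varphi\in C^1$ and $\varphi'>0$, so the chain rule for the strong slope, Lemma~\ref{chain}, gives $|\nabla g|(x)=\varphi'(f(x)-\alpha)\,|\nabla f|(x)$ for all $x\in[\alpha<f<\beta]=[0<g<\beta']$. Hence assertion (i) of Theorem~\ref{AzeCor} is precisely $|\nabla g|(x)\ge 1$ on $[0<g<\beta']$, i.e. assertion (i) of Theorem~\ref{Linear} for $g$ with $\tau=1$ and levels $0,\beta'$. By Theorem~\ref{Linear} this is equivalent to $g(x)-\gamma'\ge\dist(x,[g\le\gamma'])$ for all $\gamma'\in[0,\beta')$ and $x\in[\gamma'<g<\beta']$. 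Substituting $g(x)=\varphi(f(x)-\alpha)$, $\gamma'=\varphi(\gamma-\alpha)$, $[g\le\gamma']=[f\le\gamma]$ and $[\gamma'<g<\beta']=[\gamma<f<\beta]$ turns this into $\varphi(f(x)-\alpha)\ge\varphi(\gamma-\alpha)+\dist(x,[f\le\gamma])$ for all $\gamma\in[\alpha,\beta)$ and $x\in[\gamma<f<\beta]$, which is assertion (ii), the value $\gamma=\beta$ being vacuous. Since Theorem~\ref{Linear} is an equivalence, both implications transfer and the proof is complete.

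The routine parts here are the sublevel-set identities and the change of variable $\gamma\mapsto\gamma'$; the one step that genuinely deserves care is the invocation of the chain rule, since $g$ has been flattened where $f\le\alpha$ and capped where $f\ge\beta$. The delicate point will be to check that these modifications are invisible to $|\nabla g|(x)$ at $x\in[\alpha<f<\beta]$: lower semicontinuity of $f$ forces $f>\alpha$ on a neighbourhood of such an $x$, so the flattening never enters the local $\limsup$ defining the strong slope, while the capping from above can only decrease the difference quotients $g(x)-g(y)$ and therefore cannot enlarge $|\nabla g|(x)$, so the equality furnished by Lemma~\ref{chain} is preserved. Securing these local identifications, together with the finite-truncation argument when $\beta'=+\infty$, is the only nontrivial obstacle; everything else is bookkeeping.
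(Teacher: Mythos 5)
Your proposal is correct and takes essentially the same route as the paper: the paper also derives Theorem~\ref{AzeCor} by applying the linear characterization of Theorem~\ref{Linear} to the composite function $x\mapsto\varphi\left(f(x)-\alpha\right)$, with Lemma~\ref{chain} transferring the slope condition. Your construction of the extension $\bar\varphi$, the sublevel-set dictionary, and the verification that the flattening below $\alpha$ and the capping above $\beta$ leave the strong slope unchanged merely supply, correctly, the details the paper leaves implicit.
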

This is content of \cite[Corollary 4]{BolDanLeyMaz}, \cite[Theorem 4.2]{AzeCor16}. In the latter result, if we let $\gamma$ equal to $\alpha$ in the assertion (ii), then we immediately obtain as a consequence, a sufficient condition for nonlinear global error bound.
\begin{corollary}\label{nonlinear}
We suppose that 
$$\varphi'(f(x)-\alpha)|\nabla f|(x)\geq 1,\,\forall x\in [\alpha<f<\beta],$$
where $\varphi\in \KK[0,\beta-\alpha)$. Then
 $$\varphi \left(f(x)-\alpha\right)\geq \dist\left(x,[f\leq \alpha]\right), \, \forall x\in [\alpha<f<\beta].$$
\end{corollary}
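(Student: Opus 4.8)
The hypothesis of the Corollary is precisely assertion \textbf{(i)} of Theorem~\ref{AzeCor}, namely $\varphi'(f(x)-\alpha)|\nabla f|(x)\geq 1$ for all $x\in[\alpha<f<\beta]$, with $\varphi\in\KK[0,\beta-\alpha)$. By the equivalence in Theorem~\ref{AzeCor}, assertion \textbf{(ii)} then holds as well:
\begin{equation*}
\varphi\left(f(x)-\alpha\right)\geq \varphi(\gamma-\alpha)+\dist\left(x,[f\leq\gamma]\right),\quad \forall\gamma\in[\alpha,\beta],\ \forall x\in[\gamma<f<\beta].
\end{equation*}
The whole content of the Corollary lies in choosing the free parameter $\gamma$ well, as the remark preceding the statement already suggests.

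\smallskip

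\textbf{The key step is to set $\gamma=\alpha$.} With this choice, the term $\varphi(\gamma-\alpha)$ becomes $\varphi(0)$, which equals $0$ because every $\varphi\in\KK[0,\beta-\alpha)$ vanishes at zero (this is built into the definition of the class $\KK$ recalled after the KL theorem). Moreover, the admissibility condition $\gamma\in[\alpha,\beta]$ is satisfied by $\gamma=\alpha$, and the domain condition $x\in[\gamma<f<\beta]$ reduces exactly to $x\in[\alpha<f<\beta]$, which is the set on which we want the conclusion. Substituting, assertion \textbf{(ii)} collapses to
\begin{equation*}
\varphi\left(f(x)-\alpha\right)\geq \dist\left(x,[f\leq\alpha]\right),\quad\forall x\in[\alpha<f<\beta],
\end{equation*}
which is exactly the claimed nonlinear global error bound.

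\smallskip

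\textbf{I expect no genuine obstacle here}, since the Corollary is a formal consequence of the theorem it follows. The only point requiring a moment's care is the endpoint behavior: one must confirm that $\gamma=\alpha$ is a legitimate value of the parameter in Theorem~\ref{AzeCor}\,\textbf{(ii)} (the quantifier reads $\forall\gamma\in[\alpha,\beta]$, so the lower endpoint is indeed included) and that $\varphi(0)=0$ is available from the definition of $\KK[0,\beta-\alpha)$. Both hold, so the argument is a single substitution with nothing left to estimate.
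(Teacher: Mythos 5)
Your proposal is correct and coincides with the paper's own argument: the paper derives Corollary~\ref{nonlinear} precisely by invoking the equivalence in Theorem~\ref{AzeCor} and setting $\gamma=\alpha$ in assertion (ii), where $\varphi(\gamma-\alpha)=\varphi(0)=0$ by the definition of the class $\KK$. Your additional checks (that $\gamma=\alpha$ is an admissible endpoint and that $\varphi(0)=0$) are exactly the right points to verify, so nothing is missing.
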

Generally, the converse of this corollary is false, as shown in \cite[Remark 3]{Kur14} (when $f$ is a polynomial function) and  in \cite[Theorem 28]{eb&kl} (when $f$ is convex). However, in some particular cases, this converse may be hold, for example:
\begin{itemize}
	\item $f$ is an analytic function with an isolated zero, see \cite{Gwo}.
	
	\item  $f$ is a convex function and an additional assumption on $\f$, see \cite{eb&kl}, (we also show this result in Theorem~\ref{c1:equiv}).
\end{itemize}
 
Recalling $\|d\||\nabla f| (x)\geq -f'(x,d), \forall (x,d)\in X^2$, a consequence of Theorem~\ref{Linear} is:
\begin{corollary}\label{Nesscond}
	For any $\tau>0$, suppose that for each $x\in [\alpha<f<\beta]$, there exists a unit vector $d_x\in X$ such that 
	$$f'(x,d_x)\leq -\frac{1}{\tau}.$$
Then 
$$\tau \left(f(x)-\alpha\right)\geq \dist\left(x,[f(x)\leq\alpha]\right), \,\forall x\in [\alpha<f<\beta].$$
\end{corollary}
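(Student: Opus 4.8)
The plan is to read off assertion (i) of Theorem~\ref{Linear} directly from the hypothesis, and then let the equivalence in that theorem do all the work. The bridge is the inequality recalled immediately before the statement, namely $\|d\|\,|\nabla f|(x)\geq -f'(x,d)$ for all $(x,d)\in X^2$, which converts information about directional derivatives into a lower bound on the strong slope.

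First I would fix an arbitrary $x\in[\alpha<f<\beta]$ and apply the recalled inequality with the direction $d=d_x$ furnished by the hypothesis. Since $d_x$ is a \emph{unit} vector, we have $\|d_x\|=1$, so
\begin{equation*}
|\nabla f|(x)=\|d_x\|\,|\nabla f|(x)\geq -f'(x,d_x)\geq \frac{1}{\tau}.
\end{equation*}
As $x\in[\alpha<f<\beta]$ was arbitrary, this establishes
\begin{equation*}
|\nabla f|(x)\geq \frac{1}{\tau},\qquad\forall x\in[\alpha<f<\beta],
\end{equation*}
which is exactly assertion (i) of Theorem~\ref{Linear}.

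Next I would invoke the equivalence (i)$\Leftrightarrow$(ii) of Theorem~\ref{Linear}, yielding
\begin{equation*}
\tau\left(f(x)-\gamma\right)\geq \dist\left(x,[f\leq\gamma]\right),\qquad\forall\gamma\in[\alpha,\beta),\ \forall x\in[\gamma<f<\beta].
\end{equation*}
Specializing to $\gamma=\alpha$ (which is admissible since $\alpha\in[\alpha,\beta)$) gives precisely the claimed error bound
\begin{equation*}
\tau\left(f(x)-\alpha\right)\geq \dist\left(x,[f\leq\alpha]\right),\qquad\forall x\in[\alpha<f<\beta].
\end{equation*}

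There is no genuinely hard step here: the result is a repackaging of Theorem~\ref{Linear} through the slope--directional-derivative inequality. The only point requiring care is the normalization $\|d_x\|=1$, which is what allows the factor $\|d_x\|$ to disappear and the bound $f'(x,d_x)\leq -1/\tau$ to transfer cleanly into $|\nabla f|(x)\geq 1/\tau$; without the unit-vector assumption one would instead obtain $|\nabla f|(x)\geq 1/(\tau\|d_x\|)$ and the constant would degrade.
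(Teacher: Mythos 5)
Your proof is correct and follows exactly the route the paper intends: the paper states this corollary as an immediate consequence of Theorem~\ref{Linear} via the recalled inequality $\|d\|\,|\nabla f|(x)\geq -f'(x,d)$, which is precisely your argument of lower-bounding the strong slope by $1/\tau$ using the unit vector $d_x$ and then specializing (ii) of Theorem~\ref{Linear} to $\gamma=\alpha$.
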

This is the content of \cite[Theorem 2.5]{NgFeng1}. A local version of Theorem~\ref{AzeCor} is given as follows
\begin{theorem}\cite{AzeCor16}\label{kllocal}
%Let $f:X\rightarrow \Rcupinf$ be lower semicontinuous, $-\infty<\alpha<\beta\leq \infty$ and $\varphi\in \KK[0,\beta-\alpha]$.
Consider the following statements
	\begin{description}
		\item[(i)] There exists $\varepsilon>0$ such that
		$$\varphi'(f(x)-\alpha)|\nabla f|(x)\geq 1,\, \forall x\in B_\varepsilon(\bar{x}) \cap[\alpha<f<\beta].$$
		\item[(ii)] There exists $\rho>0 $ such that $$\varphi \left(f(x)-\alpha\right)\geq \f(\gamma -\alpha)+ \dist\left(x,[f(x)\leq\gamma]\right), \, \forall \gamma\in [\alpha,\beta),\,\forall x\in B_\rho(\bar{x}) \cap [\alpha<f<\beta].$$
	\end{description}		
	Then $(i)\Rightarrow (ii)$ with $\rho=\varepsilon/2$ and $(ii)\Rightarrow (i)$ with $\varepsilon=\rho$.
\end{theorem}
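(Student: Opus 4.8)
The plan is to mirror the proof of the global Theorem~\ref{AzeCor}: first collapse the nonlinear inequality to a linear one via the chain rule for the strong slope, and then run an Ekeland-type contradiction argument localized to $B_\varepsilon(\bar{x})$. I would set $g:=\varphi(f(\cdot)-\alpha)$ on $[\alpha<f<\beta]$. Since $\varphi\in\KK[0,\beta-\alpha)$ is increasing with $\varphi'>0$, Lemma~\ref{chain} applied to $s\mapsto\varphi(s-\alpha)$ gives $|\nabla g|(x)=\varphi'(f(x)-\alpha)\,|\nabla f|(x)$, so that (i) reads exactly $|\nabla g|(x)\ge 1$ on $B_\varepsilon(\bar{x})\cap[\alpha<f<\beta]$. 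Because $\varphi$ is an increasing bijection of $[0,\beta-\alpha)$ onto its range, for $\gamma\in[\alpha,\beta)$ and $\delta:=\varphi(\gamma-\alpha)$ we have $[f\le\gamma]=[g\le\delta]$, so the inequality in (ii) becomes the linear (constant~$1$) error bound $g(x)-\delta\ge\dist(x,[g\le\delta])$. Everything thus reduces to proving, for $g$, a localized equivalence between the slope bound $|\nabla g|\ge 1$ and this linear error bound, with the stated relations between the radii.

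For $(i)\Rightarrow(ii)$ I would take $\rho=\varepsilon/2$, fix $x\in B_\rho(\bar{x})$ with $g(x)>\delta$, and argue by contradiction, assuming $\dist(x,[g\le\delta])>g(x)-\delta$. Applying Ekeland's variational principle to the lower semicontinuous nonnegative function $h:=(g-\delta)_+$ (so $h(x)=g(x)-\delta$ and $[h=0]=[g\le\delta]$; one may assume the level set is nonempty, else the bound is vacuous and a limiting form of the same argument applies), with a parameter $\lambda$ chosen slightly above $h(x)$, produces a point $z$ with $h(z)\le h(x)$, $\|z-x\|\le\lambda$, and strong slope $|\nabla h|(z)\le h(x)/\lambda<1$. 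Two facts then close the argument: since $\|z-x\|\le\lambda<\dist(x,[g\le\delta])$ the point $z$ avoids $[g\le\delta]$, so $h(z)>0$ and $h=g-\delta$ near $z$, giving $|\nabla g|(z)=|\nabla h|(z)<1$; and since $x\in B_{\varepsilon/2}(\bar{x})$, the triangle inequality $\|z-\bar{x}\|<\lambda+\varepsilon/2$ places $z$ in $B_\varepsilon(\bar{x})\cap[\alpha<f<\beta]$ (the range membership following from $0<\delta<g(z)\le g(x)<\varphi(\beta-\alpha)$). This contradicts (i), so the linear bound holds and translates back into (ii). The hard part is precisely this containment: the halving $\rho=\varepsilon/2$ is what allows the triangle inequality to keep the Ekeland point inside $B_\varepsilon(\bar{x})$, and one must verify that $\lambda$ can be chosen simultaneously above $h(x)$ and below both $\dist(x,[g\le\delta])$ and $\varepsilon/2$ — that is, that reaching the level set does not force the descent out of the ball on which the slope estimate is guaranteed.

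For the converse $(ii)\Rightarrow(i)$, with $\varepsilon=\rho$, the argument is more direct and needs no variational principle. Fix $x\in B_\rho(\bar{x})\cap[\alpha<f<\beta]$ and let $\delta\uparrow g(x)$ along values $\delta=\varphi(\gamma-\alpha)$ with $\gamma\uparrow f(x)$. By (ii), $\dist(x,[g\le\delta])\le g(x)-\delta\to 0$, so choosing near-minimizers $y\in[g\le\delta]$ with $\|x-y\|\le\dist(x,[g\le\delta])(1+o(1))$ forces $y\to x$; in particular $x$ cannot be a local minimum, so the $\limsup$ formula for the slope is in force. Since $g(y)\le\delta$ we get $g(x)-g(y)\ge g(x)-\delta\ge\dist(x,[g\le\delta])$, whence $\tfrac{g(x)-g(y)}{\|x-y\|}\to 1$ and therefore $|\nabla g|(x)\ge 1$. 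Undoing the change of variable through Lemma~\ref{chain} gives $\varphi'(f(x)-\alpha)\,|\nabla f|(x)\ge 1$ for every $x\in B_\rho(\bar{x})\cap[\alpha<f<\beta]$, which is exactly (i) with $\varepsilon=\rho$.
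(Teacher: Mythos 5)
The paper itself gives no proof of this theorem (it is quoted from \cite{AzeCor16}), so your proposal can only be judged against the strategy of the cited source, which it reproduces correctly in outline: the change of function $g=\varphi(f(\cdot)-\alpha)$ via Lemma~\ref{chain}, reduction to a slope-one linear error bound, Ekeland's principle for $(i)\Rightarrow(ii)$, and a direct difference-quotient argument for $(ii)\Rightarrow(i)$. Your $(ii)\Rightarrow(i)$ direction is complete and correct, and your observation that lower semicontinuity of $g$ forces $h=g-\delta$ near the Ekeland point $z$ (so that $|\nabla g|(z)=|\nabla h|(z)$) is exactly the right way to transfer the slope estimate.

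The gap is in $(i)\Rightarrow(ii)$, and it sits precisely at the step you flag and then defer: you need a parameter $\lambda$ with $h(x)<\lambda$, $\lambda<\dist\left(x,[g\le\delta]\right)$ and $\lambda\le\varepsilon/2$, hence you need $h(x)=\varphi(f(x)-\alpha)-\varphi(\gamma-\alpha)<\varepsilon/2$, and nothing in the stated hypotheses provides this: $\varphi(f(x)-\alpha)$ can be arbitrarily large on $B_{\varepsilon/2}(\bar x)$. This is not a verification you could have completed with more effort, because the statement as printed (with no assumption relating $\bar x$ to $[f\le\alpha]$) is false. Take $X=\R$, $\alpha=0$, $\beta=+\infty$, $\varphi(s)=s$, $\bar x=0$, $\varepsilon=1$, and $f(x)=x+2$ for $x\ge -1$, $f(x)=e^{x+1}$ for $x<-1$: then $|\nabla f|\equiv 1$ on $B_1(0)$, so (i) holds, yet for small $\gamma>0$ one has $\dist\left(0,[f\le\gamma]\right)=1-\ln\gamma$, which exceeds $f(0)-\gamma=2-\gamma$, so (ii) fails for every $\rho>0$. (Your parenthetical that the empty-level-set case is ``vacuous'' is also wrong: $\dist(x,\emptyset)=+\infty$ makes (ii) false, not vacuous.) The missing hypothesis is $\bar x\in[f\le\alpha]$, which the paper does impose in Corollary~\ref{localnonlinear}; with it, your argument closes with no further work: $[f\le\gamma]\ni\bar x$ is nonempty and $\dist\left(x,[g\le\delta]\right)\le\|x-\bar x\|<\varepsilon/2$ for $x\in B_{\varepsilon/2}(\bar x)$, so under the contradiction hypothesis $h(x)<\dist\left(x,[g\le\delta]\right)$ every $\lambda\in\left(h(x),\dist\left(x,[g\le\delta]\right)\right)$ automatically satisfies $\lambda<\varepsilon/2$, the Ekeland point $z$ lands in $B_\varepsilon(\bar x)\cap[\alpha<f<\beta]$, and $|\nabla g|(z)\le h(x)/\lambda<1$ contradicts (i). (A cosmetic point: in (ii) the quantification should be over $x\in[\gamma<f<\beta]$, as in Theorem~\ref{AzeCor}; you implicitly adopt this reading by restricting to $g(x)>\delta$, which is the right interpretation.)
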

In the statement (ii), by setting $\gamma=\alpha$, we obtain a local version of Corollary~\ref{nonlinear}.
\begin{corollary}\cite{AzeCor16}\label{localnonlinear}
	%Let $f:X\rightarrow \Rcupinf$ be lower semicontinuous, $-\infty<\alpha<\beta\leq \infty$ and $\varphi\in \KK[0,\beta-\alpha]$.
	For any $\bar{x}\in [f\leq \alpha]$, suppose that there exists $\varepsilon>0$ such that
		$$\varphi'(f(x)-\alpha)|\nabla f|(x)\geq 1,\, \forall x\in B_{2\varepsilon}(\bar{x}) \cap[\alpha<f<\beta].$$
Then
 $$\varphi \left(f(x)-\alpha\right)\geq  \dist\left(x,[f(x)\leq\alpha]\right), \,\forall x\in B_\varepsilon(\bar{x}) \cap [\alpha<f<\beta].$$
	
\end{corollary}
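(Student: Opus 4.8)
The plan is to derive Corollary~\ref{localnonlinear} directly from the local version of the characterization theorem, Theorem~\ref{kllocal}, by the same specialization trick that was used to pass from Theorem~\ref{AzeCor} to Corollary~\ref{nonlinear}. First I would observe that the hypothesis of the corollary is exactly statement (i) of Theorem~\ref{kllocal} with the particular choice $\varepsilon_{\mathrm{Thm}}=2\varepsilon$, since the assumed inequality holds on the ball $B_{2\varepsilon}(\bar{x})\cap[\alpha<f<\beta]$. Applying the implication $(i)\Rightarrow(ii)$ of Theorem~\ref{kllocal}, which yields (ii) with radius $\rho=\varepsilon_{\mathrm{Thm}}/2$, I obtain statement (ii) on the ball of radius $\rho=(2\varepsilon)/2=\varepsilon$; this matching of radii is precisely why the corollary is stated with $B_{2\varepsilon}$ in the hypothesis and $B_{\varepsilon}$ in the conclusion.

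Next I would carry out the specialization $\gamma=\alpha$. Statement (ii) of Theorem~\ref{kllocal} asserts
$$\varphi\left(f(x)-\alpha\right)\geq \f(\gamma-\alpha)+\dist\left(x,[f\leq\gamma]\right),\quad\forall\gamma\in[\alpha,\beta),\ \forall x\in B_\rho(\bar{x})\cap[\alpha<f<\beta].$$
Since $\alpha\in[\alpha,\beta)$, I am entitled to set $\gamma=\alpha$. Because $\varphi\in\KK[0,\beta-\alpha)$ vanishes at zero, the term $\f(\gamma-\alpha)=\f(0)=0$ drops out, and the level set $[f\leq\gamma]$ becomes $[f\leq\alpha]$. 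What remains is exactly the desired inequality
$$\varphi\left(f(x)-\alpha\right)\geq \dist\left(x,[f\leq\alpha]\right),\quad\forall x\in B_\varepsilon(\bar{x})\cap[\alpha<f<\beta],$$
which is the conclusion of the corollary (noting that the point $\bar{x}\in[f\leq\alpha]$ guarantees the relevant level set is nonempty, so the distance is finite).

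There is essentially no hard analytic step here, since all the work is concealed inside Theorem~\ref{kllocal}; the corollary is a pure specialization. The only point that requires care is the bookkeeping of the radii: one must feed $2\varepsilon$ as the hypothesis radius into Theorem~\ref{kllocal} so that the factor of one half in the implication $(i)\Rightarrow(ii)$ produces the advertised conclusion radius $\varepsilon$. Thus the main (and only) obstacle is to verify that the vanishing property $\f(0)=0$ of the function class $\KK[0,\beta-\alpha)$ legitimately kills the additive term when $\gamma=\alpha$, which is immediate from the definition of $\KK$ recalled earlier in the excerpt.
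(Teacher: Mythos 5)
Your proof is correct and follows exactly the paper's route: the paper obtains this corollary precisely by applying the implication $(i)\Rightarrow(ii)$ of Theorem~\ref{kllocal} and then setting $\gamma=\alpha$, with the same radius bookkeeping ($2\varepsilon$ in the hypothesis yielding $\rho=\varepsilon$ in the conclusion) and the same use of $\f(0)=0$ to remove the additive term. Nothing is missing.
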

If we take $\varphi (s)=\tau s^\theta,\,\tau>0,\,\theta \in [0,1]$, then this corollary recover the result of Ngai, Thera \cite[Corollary 2]{NgaiThera09}.

As we mentioned before, the converse of the latter corollary does not always hold. The results of Corollary~\ref{nonlinear}, Corollary~\ref{localnonlinear} have been appeared in numerous works, for instance, see \cite{Gwo,Kur14, NgaiThera05, NgaiThera09, Son10,Son12}. This result gives an useful tools for establishing the quantitative error bounds, see \cite{LiMorPham,LMNP}.   
\subsection{Equivalence in the convex case}
In the sequel, we suppose that $f\colon X\rightarrow \R\cup \{+\infty\}$ is a proper lower semicontinuous convex function. The following extra-properties are available .
\begin{itemize}
	\item $\partial^F f(x)=\partial f(x)=\{u\in X^*| \langle u,y-x\rangle\leq f(y)-f(x),\, \forall y\in X\},\, \forall x\in\dom f$.
	\item $|\nabla f| (x)= \dist(0,\partial f(x)), \forall x\in X.$	
\end{itemize}
In the convex case, Theorem~\ref{AzeCorLinear} can be simplified by the following proposition.
\begin{proposition} \cite{AzeCor02}\label{slope}
	 For $-\infty<\alpha<\beta\leq+\infty$, the following assertions hold true:
	\begin{description}
		\item[(i)]  $|\nabla f|(x)=\sup_{f(z)\leq f(x)} \frac{f(x)-f(z)}{\dist(x,z)}$, with $x$ is not a minimum point of $f$.
		\item[(ii)]$\inf_{[\alpha<f<\beta]} |\nabla f|(x) \geq \inf_{[f=\alpha]} |\nabla f|(x)$.
\item[(iii)]$\inf_{\alpha\leq \gamma<\beta}\left(\inf_{x\in [\gamma<f<\beta]}\frac{f(x)-\gamma}{\dist(x,[f\leq \gamma])}\right)=\inf_{x\in [\alpha<f<\beta]}\frac{f(x)-\alpha}{\dist(x,[f\leq \alpha])}.$
	\end{description}
\end{proposition}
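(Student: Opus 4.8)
The plan is to prove the three items in order, using (i) as the basic tool and exploiting two elementary consequences of convexity: the monotonicity of the difference quotient $s\mapsto (f(x+sd)-f(x))/s$ along a ray, and the nearest-point property of the closed convex sublevel sets $[f\le\gamma]$. For (i) I would establish two inequalities. For $\le$, I start from the definition $|\nabla f|(x)=\limsup_{y\to x}(f(x)-f(y))/\|x-y\|$; any $y$ near $x$ with $f(y)<f(x)$ lies in $[f\le f(x)]$, so its quotient is dominated by the supremum on the right, while points with $f(y)\ge f(x)$ contribute only nonpositive quotients, so the limsup is $\le$ that supremum. For the reverse bound, given $z$ with $f(z)\le f(x)$ I write $z=x+td$ with $t=\|x-z\|$ and $\|d\|=1$; convexity makes $(f(x+sd)-f(x))/s$ nondecreasing in $s$, hence $(f(x)-f(z))/\|x-z\|\le -f'(x,d)\le|\nabla f|(x)$, the last step being the elementary property $\|d\|\,|\nabla f|(x)\ge -f'(x,d)$ recorded in the preliminaries. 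Taking the supremum over $z$ closes the equality; the hypothesis that $x$ is not a minimizer is used only to guarantee both sides are positive.

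For (ii), fix $x\in[\alpha<f<\beta]$ and let $w$ be a metric projection of $x$ onto $[f\le\alpha]$ (in general I would use a minimizing sequence to avoid attainment issues). Since $x\notin[f\le\alpha]$, the nearest point satisfies $f(w)=\alpha$, so $w\in[f=\alpha]$. The key estimate is $|\nabla f|(w)\le (f(x)-\alpha)/\dist(x,[f\le\alpha])$. To prove it I apply (i) at $w$, writing $|\nabla f|(w)=\sup_{z\in[f\le\alpha]}(\alpha-f(z))/\|w-z\|$, and bound each quotient separately: the segment $[z,x]$ meets $[f=\alpha]$ at $\zeta=(1-\nu)z+\nu x$, where convexity gives $\nu\ge(\alpha-f(z))/(f(x)-f(z))$, while the nearest-point property gives $\|x-\zeta\|=(1-\nu)\|x-z\|\ge\|x-w\|$; feeding these into the triangle inequality $\|w-z\|\ge\|x-z\|-\|x-w\|$ yields $(\alpha-f(z))/\|w-z\|\le (f(x)-\alpha)/\|x-w\|$. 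Then (i) at $x$ gives the companion bound $(f(x)-\alpha)/\dist(x,[f\le\alpha])\le|\nabla f|(x)$, since $f(w')\le\alpha$ forces $f(x)-\alpha\le f(x)-f(w')$ for any near-projection $w'$. Chaining, $\inf_{[f=\alpha]}|\nabla f|\le|\nabla f|(w)\le|\nabla f|(x)$, and taking the infimum over the slab proves (ii).

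For (iii), the inequality $\le$ is immediate because $\gamma=\alpha$ is admissible in the outer infimum. For the reverse it suffices to show that for fixed $x$ the map $\gamma\mapsto (f(x)-\gamma)/\dist(x,[f\le\gamma])$ is nondecreasing on $[\alpha,f(x))$: then any competitor $x\in[\gamma<f<\beta]\subset[\alpha<f<\beta]$ at a level $\gamma\ge\alpha$ has ratio at least its ratio at level $\alpha$, hence at least the right-hand infimum, so each inner infimum dominates the one at $\alpha$. This monotonicity again comes from convexity: projecting $x$ onto $[f\le\alpha]$ and walking the segment back toward $x$ until it first crosses level $\gamma$, the convexity inequality bounds the crossing parameter by $(f(x)-\gamma)/(f(x)-\alpha)$, which forces $\dist(x,[f\le\gamma])\le \bigl((f(x)-\gamma)/(f(x)-\alpha)\bigr)\dist(x,[f\le\alpha])$ and gives the claim.

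The main obstacle I anticipate is the per-$z$ estimate in (ii): $|\nabla f|(w)$ is a supremum over all comparison points $z\in[f\le\alpha]$, i.e.\ over all directions, and a naive one-step descent from $x$ controls the slope in only one direction, so the real difficulty is to bound the slope at $w$ uniformly in $z$. The proposed route achieves this by tying every $z$ to the same projection $w$ through the crossing point $\zeta$ together with the nearest-point inequality; making this robust when the projection is not attained — replacing $w$ by a minimizing sequence and tracking the resulting $o(1)$ errors through the infimum — is the technical point to handle with care.
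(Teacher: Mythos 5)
Parts (i) and (iii) of your argument are correct. In (i) the two inequalities (via monotonicity of convex difference quotients and the elementary bound $\|d\|\,|\nabla f|(x)\ge -f'(x,d)$) are exactly what is needed, and in (iii) the crossing-point estimate $\dist(x,[f\le\gamma])\le\frac{f(x)-\gamma}{f(x)-\alpha}\,\dist(x,[f\le\alpha])$ does the job; note that in (iii) you do not actually need a projection at all, since running your convexity bound from an \emph{arbitrary} $w\in[f\le\alpha]$ gives $\dist(x,[f\le\gamma])\le\frac{f(x)-\gamma}{f(x)-f(w)}\|x-w\|\le\frac{f(x)-\gamma}{f(x)-\alpha}\|x-w\|$, and the infimum over $w$ can be taken at the end, so no attainment issue arises there. (For the record, the paper states this proposition with a citation to Az\'e--Corvellec and gives no proof, so your argument has to be measured against the statement itself, which is asserted for a proper lsc convex $f$ on an arbitrary Banach space.)

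The genuine gap is in (ii), precisely at the point you flagged, and the repair you sketch does not close it. The chain $\inf_{[f=\alpha]}|\nabla f|\le|\nabla f|(w)\le\frac{f(x)-\alpha}{\dist(x,[f\le\alpha])}\le|\nabla f|(x)$ is valid when $w$ is an \emph{exact} nearest point, so your proof of (ii) is complete in $\R^n$ and, more generally, whenever metric projections onto the closed convex set $[f\le\alpha]$ exist (e.g.\ $X$ reflexive). In a general Banach space they may not exist, and a minimizing sequence cannot be substituted with $o(1)$ bookkeeping: for $w_\varepsilon\in[f=\alpha]$ with $\|x-w_\varepsilon\|\le D+\varepsilon$, $D=\dist(x,[f\le\alpha])$, your per-$z$ estimate degrades to $\|w_\varepsilon-z\|\ge\frac{\alpha-f(z)}{f(x)-\alpha}D-\varepsilon$, which is vacuous for every comparison point $z$ with $\alpha-f(z)\le\varepsilon\frac{f(x)-\alpha}{D}$; and it is exactly such $z$, lying arbitrarily close to $w_\varepsilon$, that can make $|\nabla f|(w_\varepsilon)$ blow up, because the strong slope is a local quantity over which near-minimality of $\|x-\cdot\|$ gives no control. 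A two-dimensional picture shows the failure mechanism concretely: for $f(u,v)=\max\{u,\,K(2u+|v|-1)\}$, $\alpha=0$, $x=(1/2,0)$, the exact projection $(0,0)$ has slope $1=\frac{f(x)}{\dist(x,[f\le0])}$, whereas the level-zero points $(-s/2,\,1+s)$, $s>0$ small, lie within a bounded multiple ($\sqrt5+o(1)$) of $\dist(x,[f\le0])$ from $x$ and have slope $K\sqrt5$ with $K$ arbitrary: the slope of a point of $[f=\alpha]$ is dictated by which face of $[f\le\alpha]$ it sits on, not by how nearly it minimizes the distance. In finite dimensions (ii) is rescued because near-projections converge to the exact projection; in a non-reflexive space there is nothing for them to converge to, so your argument has no fallback, and establishing (ii) in the stated generality requires a genuinely different mechanism (completeness/Ekeland-type arguments in the spirit of Az\'e--Corvellec, or an explicit reflexivity hypothesis as in the Ng--Zheng-type results quoted later in the paper), not an epsilon-chase.
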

Thanks to Proposition~\ref{slope}, the convex version of Theorem~\ref{Linear} is given as following.
\begin{theorem}\label{Linearconv}
Suppose $-\infty<\alpha<\beta\leq +\infty$ and $\tau>0$. Consider the following statements
	\begin{description}
		\item[(i)] $\displaystyle \inf_{x\in [f=\alpha]}\dist(0, \partial f(x)) \geq \frac{1}{\tau}$.
		\item[(ii)] $\displaystyle \inf_{x\in [\alpha<f<\beta]}\dist(0, \partial f(x)) \geq \frac{1}{\tau}.$
		\item[(iii)] $\tau \left(f(x)-\alpha\right)\geq \dist\left(x,[f(x)\leq\alpha]\right), \, \forall x\in [\alpha<f<\beta].$
		\end{description}
Then $(i)\Longrightarrow (ii)\Longleftrightarrow (iii)$.		
		\end{theorem}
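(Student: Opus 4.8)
The plan is to prove the two assertions separately, using throughout the convex-case identity $|\nabla f|(x)=\dist(0,\partial f(x))$ recorded just before Proposition~\ref{slope}; this lets me pass freely between the subdifferential formulation of (i)--(ii) and the strong-slope machinery of Theorem~\ref{AzeCorLinear} and Proposition~\ref{slope}. The implication $(i)\Rightarrow(ii)$ is then immediate: rewriting (i) as $\inf_{[f=\alpha]}|\nabla f|(x)\geq 1/\tau$ and invoking the slab-versus-boundary inequality of Proposition~\ref{slope}(ii), namely $\inf_{[\alpha<f<\beta]}|\nabla f|(x)\geq \inf_{[f=\alpha]}|\nabla f|(x)$, yields at once $\inf_{[\alpha<f<\beta]}|\nabla f|(x)\geq 1/\tau$, which (after translating the slope back into the subdifferential) is precisely (ii).

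For the equivalence $(ii)\Leftrightarrow(iii)$ my strategy is to exhibit a single scalar quantity that both statements constrain. Combining Theorem~\ref{AzeCorLinear} with Proposition~\ref{slope}(iii) gives the chain
$$\inf_{x\in[\alpha<f<\beta]}|\nabla f|(x)=\inf_{\alpha\leq\gamma<\beta}\left(\inf_{x\in[\gamma<f<\beta]}\frac{f(x)-\gamma}{\dist(x,[f\leq\gamma])}\right)=\inf_{x\in[\alpha<f<\beta]}\frac{f(x)-\alpha}{\dist(x,[f\leq\alpha])}.$$
Using $|\nabla f|=\dist(0,\partial f)$, the leftmost member is exactly the quantity appearing in (ii), so (ii) asserts that the rightmost member is $\geq 1/\tau$. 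Unwinding that single quotient inequality over all $x\in[\alpha<f<\beta]$ rearranges to $\tau(f(x)-\alpha)\geq\dist(x,[f\leq\alpha])$, which is (iii) (the case $[f\leq\alpha]=\varnothing$ making (iii) hold vacuously, and the slab condition $f(x)>\alpha$ keeping the denominator positive). Since each step is an equality or a reversible rearrangement, (ii) and (iii) are equivalent.

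The only genuinely substantive step is the collapse of the multi-level characterization to the single level $\alpha$, i.e.\ the second equality above, which is where convexity is decisive: in the general Theorem~\ref{Linear} one must impose the error bound at \emph{every} intermediate level $\gamma\in[\alpha,\beta)$, whereas here Proposition~\ref{slope}(iii) shows that for a convex $f$ the infimum over all such $\gamma$ is already attained (or approximated) at $\gamma=\alpha$. Thus the real content of Theorem~\ref{Linearconv} is that, in the convex setting, the level-$\alpha$ error bound alone suffices to encode the slope lower bound across the whole slab. I would also flag that $(i)\Rightarrow(ii)$ is not claimed to reverse, consistent with Proposition~\ref{slope}(ii) being a one-sided estimate.
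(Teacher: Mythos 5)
Your proof is correct and takes essentially the same route as the paper, which obtains Theorem~\ref{Linearconv} precisely by combining the convex identity $|\nabla f|(x)=\dist(0,\partial f(x))$ and Proposition~\ref{slope} (parts (ii) and (iii)) with Theorem~\ref{AzeCorLinear}/Theorem~\ref{Linear}, exactly as you do. One harmless imprecision: your parenthetical that (iii) holds vacuously when $[f\leq\alpha]=\varnothing$ is backwards---under the convention $\dist(x,\varnothing)=+\infty$ both (ii) and (iii) \emph{fail} in that case (when the slab is nonempty), so the equivalence you establish through the chain of equalities is unaffected.
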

We mention that the assumption (i) in the above theorem is equivalent to  the condition $0\notin\cl\left(\partial f\left(f^{-1}(0)\right)\right)$, which is called \textit{strong Slater's condition} \cite{LewPang, NgFeng1}.

We now consider the converse of Corollary~\ref{nonlinear}. Assume that 
$$\f (f(x)-\alpha)\geq \dist(x,[f\leq \alpha]),\,\forall x\in [\alpha<f<\beta], \, \f\in \KK (0,\beta-\alpha),$$
which is equivalent to
$$\frac{\f (f(x)-\alpha)}{f(x)-\alpha} \frac{f(x)-\alpha}{\dist(x,[f\leq \alpha])}\geq 1,\,\forall x\in [\alpha<f<\beta].$$
Thanks to Proposition\ref{slope}, the latter inequality implies that
$$\frac{\f (f(x)-\alpha)}{f(x)-\alpha} \dist(0,\partial f(x))\geq 1,\,\forall x\in [\alpha<f<\beta].$$
Thus, if $\f$ satisfies the condition
$$\int_{0}^{\beta-\alpha} \frac{\f(s)}{s} ds<+\infty,$$
then we get
$$\psi'(f(x)-\alpha) \dist(0,\partial f(x))\geq 1,\,\forall x\in [\alpha<f<\beta], $$
where $$\psi(s)=\int_{0}^{s} \frac{\f(t)}{t}dt,\,\forall s>0.$$
Therefore, when $f$ is convex, the converse of Corollary~\ref{nonlinear} is given as following. 
\begin{theorem}\label{c1:equiv}
Assume that $\varphi \left(f(x)-\alpha\right)\geq \dist\left(x,[f\leq \alpha]\right), \, \forall x\in [\alpha<f<\beta]$, where $\varphi\in \KK[0,\beta-\alpha)$ and
\begin{equation}\label{c1:condequiv}
\int_{0}^{\beta-\alpha} \frac{\f(s)}{s} ds<+\infty.
\end{equation}
Then, we get 
$$\psi'(f(x)-\alpha)\dist(0,\partial f(x))\geq 1,\,\forall x\in [\alpha<f<\beta].$$

\end{theorem}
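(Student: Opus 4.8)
The plan is to unwind the hypothesis into a product of two quotients, replace the purely metric quotient by a subdifferential quantity using the convex slope formula of Proposition~\ref{slope}, and finally recognize the surviving coefficient as $\psi'$. First I would fix $x\in[\alpha<f<\beta]$. Since $f$ is lower semicontinuous, $[f\leq\alpha]$ is closed, and the hypothesis $\varphi(f(x)-\alpha)\geq\dist(x,[f\leq\alpha])$ forces this set to be nonempty (otherwise the right-hand side would be $+\infty$ while the left-hand side is finite); moreover $x\notin[f\leq\alpha]$, so $\dist(x,[f\leq\alpha])>0$ and $f(x)-\alpha>0$. I may therefore divide and rewrite the hypothesis as
$$\frac{\varphi(f(x)-\alpha)}{f(x)-\alpha}\cdot\frac{f(x)-\alpha}{\dist(x,[f\leq\alpha])}\geq 1.$$

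The key step is the pointwise estimate
$$\frac{f(x)-\alpha}{\dist(x,[f\leq\alpha])}\leq \dist(0,\partial f(x)).$$
To establish it I would use Proposition~\ref{slope}(i) together with the convex identity $|\nabla f|(x)=\dist(0,\partial f(x))$. Since $[f\leq\alpha]$ is nonempty and lies strictly below the value $f(x)$, the point $x$ is not a global minimizer, hence (by convexity) not a local minimizer either, so Proposition~\ref{slope}(i) applies. For every $z\in[f\leq\alpha]$ one has $f(z)\leq\alpha<f(x)$, whence
$$\dist(0,\partial f(x))=|\nabla f|(x)\geq\frac{f(x)-f(z)}{\dist(x,z)}\geq\frac{f(x)-\alpha}{\dist(x,z)}.$$
Taking the supremum over $z\in[f\leq\alpha]$ turns $\dist(x,z)$ into the infimal distance $\dist(x,[f\leq\alpha])$ on the right, which yields the claimed bound.

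Now I would combine the two displays: multiplying the estimate $\frac{f(x)-\alpha}{\dist(x,[f\leq\alpha])}\leq\dist(0,\partial f(x))$ by the nonnegative factor $\varphi(f(x)-\alpha)/(f(x)-\alpha)$ preserves the inequality, and together with the rewritten hypothesis this gives
$$\frac{\varphi(f(x)-\alpha)}{f(x)-\alpha}\,\dist(0,\partial f(x))\geq 1.$$
It remains to identify the coefficient as $\psi'(f(x)-\alpha)$. The integrability condition \eqref{c1:condequiv} guarantees that $\psi(s)=\int_{0}^{s}\frac{\varphi(t)}{t}\,dt$ is finite on $[0,\beta-\alpha)$, and since $\varphi\in\KK[0,\beta-\alpha)$ is continuous with $\varphi(0)=0$, the integrand $\varphi(t)/t$ is continuous on $(0,\beta-\alpha)$; the fundamental theorem of calculus then yields $\psi'(s)=\varphi(s)/s$ there. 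Substituting $s=f(x)-\alpha$ finishes the argument, and as $x\in[\alpha<f<\beta]$ was arbitrary the inequality holds on the whole set.

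I expect the only genuine difficulty to be the key pointwise estimate, namely transferring the metric quotient $(f(x)-\alpha)/\dist(x,[f\leq\alpha])$ into $\dist(0,\partial f(x))$ through Proposition~\ref{slope}(i); the remaining manipulations are bookkeeping. The points demanding care are the justification that $[f\leq\alpha]$ is nonempty and that $x$ is not a minimizer (so that the slope formula is legitimate), and the verification that the integrability hypothesis is exactly what makes $\psi$ a finite primitive of $\varphi(s)/s$ — without it the coefficient $\varphi(s)/s$ would not be realizable as $\psi'$.
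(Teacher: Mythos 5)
Your proof is correct and follows essentially the same route as the paper's: both rewrite the hypothesis as the product $\frac{\varphi(f(x)-\alpha)}{f(x)-\alpha}\cdot\frac{f(x)-\alpha}{\dist(x,[f\leq\alpha])}\geq 1$, bound the second factor by $\dist(0,\partial f(x))$ via Proposition~\ref{slope} together with the convex identity $|\nabla f|(x)=\dist(0,\partial f(x))$, and then identify $\varphi(s)/s$ with $\psi'(s)$ under the integrability condition \eqref{c1:condequiv}. Your write-up merely supplies details the paper leaves implicit (nonemptiness of $[f\leq\alpha]$, why $x$ is not a minimizer, the supremum argument, and the fundamental theorem of calculus step).
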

This result has been appeared in \cite[Theorem 6]{eb&kl}, \cite[Theorem 30 ]{BolDanLeyMaz}. We remark that when $\f(s)=\tau s^\theta,\,(\tau,\,\theta>0)$, then the condition (\ref{c1:condequiv}) holds.

We will show that the Theorem~\ref{Linearconv} covers numerous results on Lipschitz global error bounds in the literature. 
\smallskip
 \begin{itemize}
 \item In \cite{Rob}, Robinson proved that if $f$ satisfies the Slater condition (there exists $\bar{x}$ such that $f(\bar{x})<0$) and the set $[f\leq 0]$ is bounded then $f$ has a Lipschitz global error bound. More generally, in \cite{Deng98}, Deng proved the following fact: 
 If there exist $\delta>0,\, \Delta>0$ such that
 $$[f\leq -\delta]\ne \emptyset \text{ and }  \sup_{[f\leq 0]} \dist(x,[f\leq -\delta])\leq \Delta, $$
then $$\dist(x,[f\leq 0])\leq \frac{\Delta}{\delta} [f(x)]_+, \:\: \forall x\in X.$$
Let us show that this result is actually a consequence of Theorem~\ref{Linearconv}. Indeed, take $x\in [f=0]$ and $u\in\partial f(x)$. For any $\varepsilon >0$, there exists $z\in [f\leq -\delta]$ such that $\dist(x,z)\leq \Delta+\varepsilon$. Thus, we obtain
 $$\delta\leq f(x)-f(z)\leq \|u\| \|x-z\|\leq \|u\| (\Delta+\varepsilon),$$
which implies that 
 $$\inf_{[f=0]} \dist(0,\partial f(x))\geq \frac{\delta}{\Delta}.$$ 
Combining with Theorem~\ref{Linearconv}, $f$ has Lipschitz global error bound.

Note that Deng's result \cite[Theorem 1]{Deng98} also covers the one in \cite{Deng97}, in which the author start form the assumption that there exist a unit vector $u$ and a constant $\tau>0$ such that 
\begin{equation}\label{c1:recession}
f^\infty(u)=\sup_{t>0} \frac{f(x+tu)-f(x)}{t}\leq -\frac{1}{\tau}
\end{equation}
to derive that $\dist(x,[f\leq 0])\leq \tau [f(x)]_+, \forall x\in X$.

\item The work of Robinson was also generalized in other directions. More precisely, instead of the boundedness assumption on the set $[f\leq 0]$, in \cite{Man}, Mangasarian used the asymptotic constraint qualification condition (this means {\em for any sequence $(x_k)_{k\in \N}\subset [f=0]$ such that $\lim\|x_k\|=\infty$, then the zero vector is not a limit point of any sequence $(u_k)_{k\in \N}$, with $ u_k\in \partial f(x_k)$}) to obtain a Lipschitz global error bound for differentiable convex function. Auslender and Crouzeix in \cite{Aus} extended the work of Mangasarian to the case nonsmooth convex functions. On the other hand, in \cite[Theorem 2]{Klatte}, Klatte and Li proved that, a convex function satisfies the Slater and the asymptotic qualification conditions if and only if
$$\inf_{x\in [f=0]} \dist(0,\partial f(x))> 0.$$
Therefore, it is clear that the results of Mangasarian \cite{Man}, Auslender and Crouzeix \cite{Aus} are the consequences of Theorem~\ref{Linearconv}.

\item  In \cite{LewPang}, Lewis and Pang characterized  Lipschitz error bounds using directional derivatives as follows. Let $f\colon\R^n\rightarrow \R\cup \{+\infty\}$ be a lower semicontinuous and convex function. They proved that the Lipschitz global error bound holds for  $f$:
  $$\dist(x,[f\leq 0])\leq \tau f(x), \quad \forall x\in \R^n,$$
   if and only if 
   $$f'(\bar{x},d)\geq \tau \|d\|,\, \forall \bar{x}\in [f=0],\, d\in N_{[f\leq 0]}(\bar{x}),$$
where the cone normal is defined by 
$N_S(\bar{x})=\{u\in X^*| \langle u,y-\bar{x}\rangle\leq 0, \forall y\in S \},\, \forall S\subset \R^n$.    
This result has been obtained by several other researchers, we can mention here the works of Ng and Zheng \cite{NgZheng}, \cite{NgFeng1} where they characterized  error bounds for lower semicontinous functions. 

Consider now \cite[Theorem 3.1]{NgFeng1}.

Suppose that $X$ is a reflexive Banach space. Then the following statements are equivalent
\begin{description}
	\item[(i)] $\dist(x,[f\leq 0])\leq \tau [f(x)]_{+}$, for all $x\in X$.
	\item[(ii)] For each $x\in [f=0]$, we get
	 $$\inf \{f'(x,d)| d\in N_{[f=0]} (x), \|d\|=1\}\geq \frac{1}{\tau} .$$
\item[(iii)] For each $x\in X\backslash [f\leq 0]$, there exists $d_x\in X, \|d_x\|=1$, such that 
$$f'(x,d_x)\leq -\frac{1}{\tau}.$$ 	 
\end{description} 
Let us prove that the (iii) above assertions are equivalent to (ii) of Theorem~\ref{Linearconv}. Assume that the assumption (iii) holds, then for all $x\in X\backslash [f\leq 0]$, we get
 $$|\nabla f|(x)\geq -f'(x,d)\geq \frac{1}{\tau}.$$
Conversely, suppose that $|\nabla f|(x)\geq \frac{1}{\tau}, \forall x\in X\backslash [f\leq 0]$. Take any $x\in X$, we get
 $$\dist(0,\partial f(x)) \geq \frac{1}{\tau}, \forall x\in X\backslash [f\leq 0],$$
hence there exists $d_x\in X$ such that
 $$-\tau=\inf\{\langle x^*,d\rangle | x^* \in X^*, \|x^*\|\leq \tau\} \geq \sup \{\langle u,d\rangle | u\in \partial f(x)\}=f'(x,d).$$
It follows that 
	$$f'(x,d_x)\leq -\frac{1}{\tau}.$$

Similarly, by setting $\f(s)=\tau s^\theta,\, (\tau, \, \theta>0)$, we can see that the following result of Ng and Zheng \cite{NgZheng} is also a consequence of  Theorem~\ref{c1:equiv}:

Let $X$ be a reflexive Banach space and $f\colon X\rightarrow \R\cup \{+\infty\}$ a continuous function. Suppose that for each $x\in X\backslash S$, there exists $d_x\in X, \|d_x\|=1$  and $\tau>0, \theta\in (0,1)$ such that 
$$f'(x,d_x)\leq -\tau f^{1-\theta}(x).$$
Then we get 
	$$\dist(x,[f\leq 0])\leq \tau [f(x)]_+^{\theta}, \forall x\in X.$$
 \end{itemize}
\subsection{Qualification conditions and error bounds}
\subsubsection{Slater's condition and error bounds}
We recall that if there exists $\bar{x}$ such that $f(\bar{x})<0$ then $f$ is said to satisfy the Slater condition. This condition plays an important role for the study of error bounds. The existence of the Lipschitz global error bound usually requires the convexity and the Slater condition. We consider the following example, which shows that for a convex function without the Slater condition, the Lipschitz global error bound may fail to hold.
\begin{ejem}\cite{LewPang}\label{ex1}
	$f(x,y)=x+\sqrt{x^2+y^2},\, (x,y)\in \R^2.$
\end{ejem}
It is easy to check that the function $f$ is convex, nonnegative on $\R^2$ and $[f=0]=\{(x,0)|x\leq 0\}$ has empty interior. Take the sequence $(z_k=(-k,1))_{k\in \N}$ then $f(z_k)$ converges to $0$ but $\dist(z_k,[f\leq 0])=1,\forall k\in\N$, so that there is not global error bound for $S$. 
%{\color{red} Why this example? what it is supposed to illustrate?}

As mentioned earlier, the Slater condition was used for the first time by Robinson \cite{Rob}.
\begin{theorem}\cite{Rob}
Let $f_1,\ldots, f_m $ be convex functions on $\R^n$ and assume that there is $\bar{x}$ such that $f_i(\bar x)<0, \ldots ,f_m(\bar{x})<0$. Then there exists $\tau>0$ such that
$$\dist(x,[f\leq 0])\leq \tau \|x-\bar{x}\|\sum_{i=1}^{m} [f_i(x)]_+, \, \forall x\in \R^n,$$ 
where $f(x)=\max_{i=1,\ldots,m} f_i(x)$.

 In additional, when $\{x\in \R^n|f_i(x)\leq 0, (i=1,\ldots, m) \}$ is bounded then there exists $\tau>0$ such that
  $$\dist(x,[f\leq 0])\leq \tau \sum_{i=1}^{m} [f_i(x)]_+, \, \forall x\in \R^n.$$ 
\end{theorem}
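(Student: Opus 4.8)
The plan is to exploit the Slater point $\bar x$ geometrically: every infeasible $x$ can be pulled back into the feasible set $S := [f\leq 0]$ (where $f=\max_i f_i$ is again convex and finite, hence continuous) along the segment $[x,\bar x]$, and the distance travelled is controlled by how negative $f(\bar x)$ is. First I would set $\delta := -f(\bar x)=\min_i\bigl(-f_i(\bar x)\bigr)>0$. The case $x\in S$ is trivial since the left-hand side vanishes, so I fix $x\notin S$, i.e. $f(x)>0$, and introduce the explicit convex combination
\[
t := \frac{f(x)}{f(x)+\delta}\in(0,1), \qquad z := (1-t)x + t\bar x .
\]

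Next I would verify $z\in S$: convexity of $f$ gives $f(z)\le (1-t)f(x)+t f(\bar x)=(1-t)f(x)-t\delta$, and the particular choice of $t$ makes the right-hand side exactly $0$, so $f(z)\le 0$ and $z\in S$. Since $z$ lies on the segment, $\|x-z\|=t\|x-\bar x\|$, whence
\[
\dist(x,S)\le \|x-z\|=\frac{f(x)}{f(x)+\delta}\,\|x-\bar x\|\le \frac{1}{\delta}\,\|x-\bar x\|\,f(x).
\]
Because $f(x)>0$ one has $f(x)=[f(x)]_+=\max_i[f_i(x)]_+\le \sum_{i=1}^m[f_i(x)]_+$, which delivers the first inequality with $\tau=1/\delta$.

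For the second, factor-free bound the point is to eliminate the growing factor $\|x-\bar x\|$, and this is where boundedness of $S$ must enter — I expect this to be the main obstacle. The idea I would use is to reroute the estimate through $\|z-\bar x\|$ rather than $\|x-\bar x\|$: since $x,z,\bar x$ are collinear, $\|z-\bar x\|=(1-t)\|x-\bar x\|$, so
\[
\dist(x,S)\le t\|x-\bar x\|=\frac{t}{1-t}\,\|z-\bar x\|=\frac{f(x)}{\delta}\,\|z-\bar x\|.
\]
Now $z\in S$ and $\bar x\in S$, while $S$ is bounded and closed, so $\Delta:=\sup_{w\in S}\|w-\bar x\|<\infty$; hence $\|z-\bar x\|\le\Delta$ and $\dist(x,S)\le(\Delta/\delta)\,f(x)\le(\Delta/\delta)\sum_i[f_i(x)]_+$, giving the claim with $\tau=\Delta/\delta$.

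I would also remark that the second part can be obtained from the earlier material: with $\bar x\in[f\le-\delta]\neq\emptyset$ and $\sup_{[f\le 0]}\dist(\cdot,[f\le-\delta])\le\Delta$ forced by boundedness, Deng's estimate yields $\dist(x,[f\le 0])\le(\Delta/\delta)[f(x)]_+$ at once, which in turn rests on Theorem~\ref{Linearconv}. The only genuine difficulty is the realization that the segment estimate must be anchored at the \emph{feasible} endpoint $z$ in order to invoke compactness of $S$; the remaining ingredients are merely the convexity inequality along the segment and the elementary identity $[\max_i f_i]_+=\max_i[f_i]_+\le\sum_i[f_i]_+$.
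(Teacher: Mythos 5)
Your proof is correct, and it is worth noting that the paper itself offers no proof of this statement: the theorem is quoted from Robinson \cite{Rob}, and the closest the paper comes to an argument is the passage just before it, where Deng's condition ($[f\le-\delta]\ne\emptyset$ and $\sup_{[f\le 0]}\dist(\cdot,[f\le-\delta])\le\Delta$) is shown to force $\inf_{[f=0]}\dist(0,\partial f(x))\ge\delta/\Delta$, whence Theorem~\ref{Linearconv} gives a Lipschitz global error bound. That route recovers only the \emph{second} inequality (the bounded case), exactly as your closing remark observes, since boundedness of $S=[f\le 0]$ together with $\bar x\in[f\le-\delta]$ yields the required $\Delta$. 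Your argument is genuinely different and more elementary: the pull-back $z=(1-t)x+t\bar x$ with $t=f(x)/(f(x)+\delta)$ is chosen so that convexity gives $f(z)\le(1-t)f(x)-t\delta=0$ exactly, and the two ways of measuring the same segment ($\|x-z\|=t\|x-\bar x\|$ versus $\|z-\bar x\|=(1-t)\|x-\bar x\|$) produce, respectively, the first bound with $\tau=1/\delta$ and, after invoking $\|z-\bar x\|\le\Delta$, the second with $\tau=\Delta/\delta$. This buys two things the paper's slope machinery does not directly provide: explicit constants, and the first inequality itself, whose factor $\|x-\bar x\|$ puts it outside the error-bound format $\dist(x,[f\le 0])\le\tau[f(x)]_+$ treated by Theorem~\ref{Linearconv}; conversely, the paper's approach generalizes beyond $\R^n$ and beyond the existence of a Slater point, which your segment construction essentially requires. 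Your identification of the feasible endpoint $z$ as the anchor for the compactness step is indeed the key point, and all intermediate identities ($[\max_i f_i]_+=\max_i[f_i]_+\le\sum_i[f_i]_+$, $t/(1-t)=f(x)/\delta$) check out.
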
 
As a consequence, we immediately deduce that the convex function systems $f_1\ldots, f_m$ has Lipschitz local error bound.
 
Luo and Luo \cite{LuoLuo} used the Slater condition to establish the Lipschitz global error bound for convex quadratic systems, this result has been extended by Pang and Wang \cite{Pangwang}. In general, the Slater condition is not sufficient to ensure that the global error bound holds, even if $f$ is a convex function. We consider the following example: 
\begin{ejem}\cite{Li10}\label{Slater:Ex}
	Let $f_1,f_2\colon\R^4\rightarrow \R$ be defined by $f_1(x)=x_1$ and
	$$f_2(x)=x_1^{16}+x_2^8+x_3^6+x_1^{2}x_2^{4}x_3^2+x^2_2x^4_3+x^4_1x^4_4+x^4_1x^6_2+x^2_1x^6_2+x^2_1+x^2_2+x_3^2-x_4,$$
	for all $x=(x_1,x_2,x_3,x_4)\in \R^4$. Define $f(x)=\max \{f_1(x),f_2(x)\},\, \forall x\in \R^4$.
\end{ejem}
We get the following properties, (see \cite{Li10}).
\begin{description}
	\item[(i)] $f_1, f_2$ are convex polynomial functions, therefore $f$ is convex.
	\item[(ii)] $f$ satisfies the Slater condition.
	\item[(iii)] For any $\alpha, \beta \in \R$ with $\alpha\leq \beta$, then
	$\sup_{x\in [f\leq\beta]} \dist(x,[f\leq \alpha])=+\infty.$
\end{description}
By taking $\alpha=0,\,\beta=1$ in the property (iii), we imply that there exists a sequence $(x_k)_{k\in \N}\subset [f\leq 1]$ such that $\dist(x_k,[f\leq 0])=+\infty$, this show that $f$ can not possess the H\"older global error bound.

However enhancing the assumptions  we can derive global error bounds from the Slater like condition. For instance, let $f$ be a lower semicontinuous, convex function on $\R^n$ which satisfies the Slater condition, then $f$ has Lipschitz gloabl error bound if one of the following assertions holds.
\begin{enumerate}
%\item $f$ is a polynomial function on $\R^n$, then $f$ admits a H\"older global error bound, see \cite[Theorem 4.1, Theorem 4.4]{Yang}, \cite[Theorem]{Li10}.
\item  $f$ can be expressed as maximum of finitely many bounded below convex polynomials function on $\R^n$, i.e: $f(x)=\max_{i=1,\ldots,d} f_i(x),\,\forall x\in\R^n$, where $f_i$ is a polynomial function on $\R^n$ with $\inf f_i>-\infty$, for all $i=1,\ldots,d$, see  \cite[Theorem 4.1]{Li10}.
\item $f$ is a separable function (in the sense that $f(x)=\sum_{i=1}^{n}f_i(x_i)$ where $x=(x_1,\ldots,x_n)$ and each $f_i$ is a lower semicontinuous function), see\cite[Theorem 4.1]{Li10}.
\item $f$ is well-posed ({\em for any sequence $\{x_k\}_{k\in \N}$ for which $\dist(0,\partial f(x_k))\rightarrow 0$ then $f(x_k)\rightarrow \inf_{X}f$}), see \cite[Corollary 1]{LewPang}.
\item $f$ satisfies the asymptotic qualification condition, see \cite{Aus}.
\end{enumerate}

 Notice that if $f$ is a convex function, then $f$ satisfies Slater condition if and only if $0\notin \partial f\left(f^{-1}(0)\right)$. %Lewis and Pang, in \cite{LewPang} (see also in the work of Mangasarian \cite{Manga98}) presented the {\textit{strong Slater condition}} $$0\notin \cl\left(\partial f\left(f^{-1}(0)\right)\right).$$Thanks to Theorem~\ref{Linearconv}, this is a sufficient condition to possess a Lipschitz global error bound when $f$ is convex.
 We can easily see that, if $f$ satisfies the Slater condition and the level set $[f\leq 0]$ is bounded then $f$ possesses the strong Slater condition. Furthermore, 
 in \cite{Klatte}, Klatte and Li proved that, for a convex function $f\colon\R^n\rightarrow \Rcupinf$ which satisfies the Slater condition, the following conditions are equivalent:
\begin{enumerate}
	\item The strong Slater condition holds. 
	\item The asymptotic qualification condition is satisfied.
	\item $\sup_{x\in [f=0]}\inf_{y\in [f<0]}\frac{\|x-y\|}{-f(y)}<+\infty.$
\end{enumerate}

\subsubsection{Abadie qualification condition and error bounds}

We begin this subsection by considering an example:
\begin{ejem}\cite{WLi97}\label{WLI97}
{\em	For $(x,y)\in\R^2$, take $f_1(x,y)=x+y,\, f_2(x,y)=-x-y, \, f_3(x,y)=(x+y)^2$ and $f(x,y)=(f_1, f_2,f_3)(x,y)$. Then $[f\leq 0]=\{(x,-x)|x\in \R^n\}$ has no interior point, but we can check that $\dist((x,y),[f\leq 0])\leq 2 \|[f(x,y)]_+\|$, for all $(x,y)\in \R^2.$	}	
\end{ejem}
This means that, the global error bound may be hold without the Slater condition. In \cite{WLi97}, Li used the Abadie qualification condition to characterize Lipschitz-type error bound for convex quadratic systems.

Recall that, the tangent cone of $S\subset \R^n$ is defined by
 $$T_S(x)=\left\lbrace y\in \R^n|\langle u,y\rangle\leq 0, \forall u\in N_S(\bar{x})\right\rbrace.$$
 Let us now recall the definition of Abadie's condition.
\begin{definition}\cite{WLi97}
	We say that the systems $f_1,f_2,\ldots,f_m\colon X\rightarrow \R$ satisfies the Abadie condition at $\bar{x}\in S=\{x\in X|f_i(x)\leq 0, i=1,\ldots,m\}$ if
	$$ T_S(\bar{x})=\{u\in X | \left\langle f'_i(\bar{x}),u\right\rangle \leq 0, \forall i\in I(\bar{x})\},$$
	where $I(\bar{x})= \{i: f_i(\bar{x})=0\}$.
	
	If this property holds at every point in $S$, then we say that the systems $f_1,f_2,\ldots,f_m$ satisfies the Abadie condition on $S$.
\end{definition}
When $X=\R^n$ and $f_1,\ldots,f_m$ are convex functions, we have the two following properties, see \cite{WLi97}. 
\begin{enumerate}
	\item The system $f_1,f_2,\ldots,f_m$ satisfies the Abadie condition at $\bar{x}\in S$ if
	and only if 
	$$ N_S(\bar{x})=\left\lbrace\sum_{i\in I(\bar{x})} \lambda_i f'_i(\bar{x})|\lambda_i \geq 0\right\rbrace.$$
	\item If there exists $x\in S$ such that $f_i(x)<0$ with $f_i$ is not affine function, for all $i=1,\ldots,m$ then the systems $f_1,f_2,\ldots,f_m$ satisfies Abadie's condition on $S$.
\end{enumerate}

Let us now give a necessary and sufficient condition for a convex quadratic system to have a Lipschitz-type global error bound, which was established by Li \cite[Theorem 4.2]{WLi97}.
\begin{theorem}\cite{WLi97}
	Let $f_1,f_2,\ldots,f_m$ be convex quadratic functions on $\R^n$ such that $S=\{x\in\R^n| f_i(x)\leq 0,(i=1\ldots,m)\}$ is nonempty. The following statements are equivalent
	\begin{description}
		\item[(i)] The system $(f_i)_{i=1\ldots,m}$ satisfies the Abadie condition on $S$.
		\item[(ii)] There exists $\tau>0$ such that 
		$$\dist(x,S)\leq \tau \sum_{i=1}^{m}[f_i(x)]_+,\, \forall x\in \R^n.$$
	\end{description}
\end{theorem}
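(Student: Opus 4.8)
The plan is to prove the two implications separately, the forward one being essentially a Taylor expansion and the reverse one carrying all the difficulty.

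For $(ii)\Rightarrow(i)$ I would argue directly from the error bound. The inclusion $T_S(\bar x)\subseteq\{u\mid \langle f'_i(\bar x),u\rangle\le 0,\ \forall i\in I(\bar x)\}$ holds at every $\bar x\in S$ with no hypothesis, so only the reverse inclusion needs work. Fix $\bar x\in S$ and a direction $u$ with $\langle f'_i(\bar x),u\rangle\le 0$ for all active $i$. Writing each $f_i(x)=\tfrac12\langle Q_ix,x\rangle+\langle b_i,x\rangle+c_i$ with $Q_i\succeq 0$, the exact second-order expansion gives, for $i\in I(\bar x)$,
$$f_i(\bar x+tu)=t\langle f'_i(\bar x),u\rangle+\tfrac{t^2}{2}\langle Q_iu,u\rangle\le \tfrac{t^2}{2}[\langle Q_iu,u\rangle]_+,$$
while for inactive $i$ one has $f_i(\bar x)<0$, hence $f_i(\bar x+tu)\le 0$ for $t$ small. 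Therefore $\sum_i[f_i(\bar x+tu)]_+=O(t^2)$, and the error bound $(ii)$ furnishes a point $y_t\in S$ with $\|\bar x+tu-y_t\|=O(t^2)$. Consequently $(y_t-\bar x)/t\to u$, placing $u$ in the tangent cone $T_S(\bar x)$. Since $\bar x$ was arbitrary, the Abadie condition holds on $S$.

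For $(i)\Rightarrow(ii)$ I would argue by contradiction: assuming Abadie on $S$ while no global error bound exists, there is a sequence $(x_k)$ with $x_k\notin S$ and $\dist(x_k,S)>k\sum_i[f_i(x_k)]_+$. Setting $f=\max_i f_i$, so that $[f\le 0]=S$ and $[f(x)]_+\le\sum_i[f_i(x)]_+\le m[f(x)]_+$, this forces the ratio $[f(x_k)]_+/\dist(x_k,S)\to 0$. The argument then splits according to whether $(x_k)$ is bounded. In the bounded case a subsequence converges to some $\bar x\in\bd S$, and I would derive a contradiction from a local error bound valid near $\bar x$; the point to be checked here is that, for a convex (indeed quadratic) system, the Abadie condition at $\bar x$ (equivalently the basic constraint qualification $N_S(\bar x)=\mathrm{cone}\{f'_i(\bar x):i\in I(\bar x)\}$, the generating cone being automatically closed as it is finitely generated) does produce such a local bound.

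The unbounded case is where the genuine content of the quadratic hypothesis enters, and I expect it to be the main obstacle. Here I would normalize and extract a recession direction $d=\lim x_k/\|x_k\|$. Dividing the bound $f_i(x_k)\le\dist(x_k,S)/k=O(\|x_k\|)/k$ by $\|x_k\|^2$ and using $Q_i\succeq 0$ forces $\langle Q_id,d\rangle=0$, hence $Q_id=0$ for every $i$: the direction $d$ lies in the common kernel of the Hessians, on which each $f_i$ is \emph{affine}. This is precisely the mechanism by which quadratic growth suppresses every direction except the affine ones, reducing the behaviour at infinity to an affine inequality system, for which Hoffman's theorem supplies a uniform linear bound. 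Threading the Abadie condition through the recession cone $S^\infty$ and combining the affine Hoffman estimate with the local bounds — most cleanly by an induction that restricts the problem to the recession subspace and lowers the dimension $n$ (or the number $m$ of constraints) — should close the contradiction and yield the global error bound. The delicate steps will be the uniform control of the error-bound constant along the unbounded sequence and the verification that the Abadie condition is inherited by the reduced affine subsystem.
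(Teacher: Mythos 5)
The paper itself gives no proof of this theorem --- it is quoted from Li \cite{WLi97} and immediately followed by the Ngai--Th\'era extension --- so your attempt has to be judged on its own merits. Your direction $(ii)\Rightarrow(i)$ is complete and correct: the exact second-order expansion of the active quadratics, the $O(t^2)$ bound on the residuals, and the conclusion $u=\lim_{t\downarrow 0}(y_t-\bar x)/t\in T_S(\bar x)$ are all valid (for a closed convex set the contingent cone coincides with the polar of $N_S(\bar x)$, which is the definition used in the paper). But this is the easy half of the theorem.

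The direction $(i)\Rightarrow(ii)$ is a plan rather than a proof, and the two steps you yourself flag as ``to be checked'' are precisely where the content of the theorem lies. First, your bounded case rests on the claim that the Abadie condition near $\bar x$ yields a local error bound for a differentiable convex system. This is not a routine verification: it is exactly Theorem~\ref{Ngaithera5}(ii) of Ngai--Th\'era \cite{NgaiThera05}, equivalently the core of Li's paper. The natural projection argument shows why it is delicate: writing $(z_k-p_k)/\|z_k-p_k\|\in N_S(p_k)=\mathrm{cone}\{f'_i(p_k):i\in I(p_k)\}$ and using the gradient inequality gives $\dist(z_k,S)\le\bigl(\max_i\lambda_i^k\bigr)\sum_i[f_i(z_k)]_+$, but Abadie does not prevent the active gradients from being positively linearly dependent (already $f_1=x$, $f_2=-x$ shows this), so boundedness of the multipliers $\lambda^k$ requires a genuine argument, not just closedness of finitely generated cones. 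Second, in the unbounded case your recession analysis correctly yields $Q_id=0$, and a similar $\limsup$ argument gives $\langle b_i,d\rangle\le 0$, hence $d\in S^\infty$; but the reduction you propose does not obviously close. Writing $x_k=y_k+s_kd$ with $y_k\perp d$ and $s_k\to+\infty$, one finds $f_i(y_k)=f_i(x_k)-s_k\langle b_i,d\rangle\ge f_i(x_k)$ and, since $d\in S^\infty$, also $\dist(y_k,S)\ge\dist(x_k,S)$: both the residual and the distance \emph{increase} when you pull the sequence back, so the ``bad ratio'' property is not automatically inherited by the reduced sequence, and Hoffman's theorem has nothing to act on yet. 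The known proofs close this by splitting the constraints according to $\langle b_i,d\rangle<0$ versus $\langle b_i,d\rangle=0$, discarding the former (which requires showing that discarding them does not change $S$ in the relevant region) and descending in dimension for the latter. None of this is carried out in your sketch, so the hard implication remains unproved.
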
 
Later, in \cite[Theorem 6]{NgaiThera05}, Ngai and Th\'era extended this result in the Banach space. In which, $f_i\colon X\rightarrow \R,i=1,\ldots, m$ are defined by 
$$f_i(x)=\frac{1}{2} \langle A_ix,x\rangle+ \langle B_i,x\rangle +c_i,$$
where  $A_i\colon X\times X \rightarrow \R$ be a symmetric continuous bilinear and semi-definite positive, $B_i\in X^*$ and $c_i\in\R$, for $i=1,\ldots,m$. In this paper, Ngai and Th\'era also gave the relation between the Abadie condition and the Lipschitz local error bound for the convex function systems.
\begin{theorem}\cite{NgaiThera05}\label{Ngaithera5}
Let $f_1,\ldots ,f_m,$ be convex continuous functions on the neighborhood of $\bar{x}\in S=\left\lbrace x\in X|f_i(x)\leq 0,\,i=1,\ldots,m\right\rbrace$. Set $f=\max_{i=1,\ldots,m} f_i$.
\begin{description}
	\item[(i)] If there exist $\tau>0,\,\varepsilon>0$ such that 
	$$\dist(x,S)\leq \tau\sum_{i=1}^{m}[f_i(x)]_+,\,\forall x\in B_\varepsilon(\bar{x})\cap K,$$
	then the Abadie condition is satisfied on $B_\delta (\bar{x})\cap S$ for some $\delta>0$.
	\item[(ii)] If $f_i,\ldots,f_m$ are differentiable on $B_\delta (\bar{x})$, then the converse of part (i) holds. 
\end{description} 
\end{theorem}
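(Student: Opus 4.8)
The plan is to reduce everything to the single convex function $f=\max_i f_i$, for which the system error bound $\dist(x,S)\le\tau\sum_i[f_i(x)]_+$ is equivalent, up to the factor $m$, to $\dist(x,S)\le\tau'[f(x)]_+$, since $[f(x)]_+\le\sum_i[f_i(x)]_+\le m[f(x)]_+$ and $S=[f\le 0]$. I will use the convex form of Abadie's condition recalled in the excerpt, namely that at $\bar y\in S$ it reads $N_S(\bar y)=\{\sum_{i\in I(\bar y)}\lambda_i f_i'(\bar y):\lambda_i\ge 0\}$, equivalently $T_S(\bar y)=\{u: f_i'(\bar y,u)\le 0,\ \forall i\in I(\bar y)\}$, together with the elementary fact, valid at every point with no qualification, that the tangent cone is always contained in this linearized cone: if $u\in T_S(\bar y)$ then $\bar y+t_k u_k\in S$ with $u_k\to u$, $t_k\downarrow0$, and convexity forces $f_i'(\bar y,u)\le0$ for active $i$. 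Thus in both parts only the reverse inclusion is at stake.

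\textbf{Part (i).} Set $\delta=\varepsilon/2$ and fix $\bar y\in B_\delta(\bar x)\cap S$; then the error bound holds on $B_\delta(\bar y)\subset B_\varepsilon(\bar x)$. Take $u$ in the linearized cone at $\bar y$, $\|u\|=1$, and put $x_t=\bar y+tu$ for small $t>0$. For an inactive index ($f_i(\bar y)<0$) continuity gives $[f_i(x_t)]_+=0$; for an active index, convexity makes $t\mapsto f_i(x_t)/t$ nondecreasing with limit $f_i'(\bar y,u)\le0$, so $[f_i(x_t)]_+=t\,[f_i(x_t)/t]_+=o(t)$. Hence $\sum_i[f_i(x_t)]_+=o(t)$, and the error bound yields $\dist(x_t,S)=o(t)$. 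Picking $s_t\in S$ with $\|x_t-s_t\|=o(t)$ gives $(s_t-\bar y)/t=u+(s_t-x_t)/t\to u$, so $u\in T_S(\bar y)$. This proves Abadie at every $\bar y\in B_\delta(\bar x)\cap S$.

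\textbf{Part (ii).} I argue by contradiction. If no local error bound holds near $\bar x$, there are $x_k\to\bar x$, $x_k\notin S$, with $\dist(x_k,S)>k[f(x_k)]_+$. Let $y_k=\Proj_S(x_k)$ (working in $\R^n$, or in a reflexive space so the nearest point exists), $r_k=\|x_k-y_k\|=\dist(x_k,S)\to0$, and $d_k=(x_k-y_k)/r_k$, a unit vector with $d_k\in N_S(y_k)$; moreover $y_k\to\bar x$ and $f(y_k)=0$. Convexity gives $f'(y_k,d_k)\le [f(x_k)-f(y_k)]/r_k=f(x_k)/r_k<1/k\to0$. On the other hand, by Abadie — here differentiability enters, so that $N_S(y_k)$ is the cone generated by the active gradients and $\partial f(y_k)=\mathrm{conv}\{\nabla f_i(y_k):i\in I(y_k)\}$ — we may write $d_k=\sum_{i\in I(y_k)}\lambda_i^k\nabla f_i(y_k)$ with $\lambda_i^k\ge0$; pairing with $d_k$ gives $\sum_i\lambda_i^k\langle\nabla f_i(y_k),d_k\rangle=1$, whence $f'(y_k,d_k)=\max_{i\in I(y_k)}\langle\nabla f_i(y_k),d_k\rangle\ge 1/\sum_i\lambda_i^k$. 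Comparing the two estimates forces $\sum_i\lambda_i^k\to+\infty$.

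\textbf{The main obstacle} is precisely to exclude this multiplier blow-up. I would normalize $\mu_i^k=\lambda_i^k/\sum_j\lambda_j^k$, invoke Carath\'eodory to keep at most $n+1$ active indices, and pass to a subsequence with a fixed index set, $y_k\to y_*\in B_\delta(\bar x)\cap S$, $\nabla f_i(y_k)\to\nabla f_i(y_*)$ (continuity of the gradients) and $\mu_i^k\to\mu_i$. Since $\sum_i\mu_i^k\nabla f_i(y_k)=d_k/\sum_j\lambda_j^k\to0$ while $\sum_i\mu_i=1$, the limit satisfies $0=\sum_i\mu_i\nabla f_i(y_*)$, i.e. $0\in\partial f(y_*)$, so $y_*$ minimizes $f$ with $f(y_*)=0$ while lying on $\bd S$. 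The crux is to show that this degeneracy is incompatible with Abadie holding at $y_*$: if $0$ is a nontrivial convex combination of the active gradients, the linearized cone collapses onto a subspace strictly larger than the tangent cone of $\mathrm{argmin}\,f$, contradicting the standing assumption that Abadie holds throughout $B_\delta(\bar x)\cap S$. Ruling out the blow-up yields a uniform bound $\sum_i\lambda_i^k\le M$, hence $f'(y_k,d_k)\ge1/M$, contradicting $f'(y_k,d_k)\to0$; equivalently one obtains a uniform lower bound $f'(y,d)\ge c>0$ for $y\in[f=0]\cap B_\delta(\bar x)$ and unit $d\in N_S(y)$, which by the directional-derivative characterization of error bounds (the local, convex counterpart of Theorem~\ref{Linearconv}, in the spirit of Lewis--Pang) delivers the local Lipschitz error bound. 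I expect the verification that $0\in\partial f(y_*)$ contradicts Abadie — equivalently, the uniform control of the multipliers — to be the genuinely delicate point; everything else is convexity bookkeeping.
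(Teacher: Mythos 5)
Your part (i) is correct: the $o(t)$ estimate along rays of the linearized cone, combined with the assumed error bound and the closedness of the tangent cone of a (locally) convex set, is a sound and standard way to get the nontrivial inclusion, and the opposite inclusion is indeed automatic from convexity. Note for the record that the paper itself offers no proof of this theorem --- it is quoted from \cite{NgaiThera05} --- so the comparison here is on the merits of your argument alone.

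Part (ii) has a genuine gap, and it sits exactly where you flagged it; moreover, the mechanism you propose to close it is not merely delicate but false. Your reduction is fine up to the point where you extract, along a subsequence, an index set $I\subset I(\bar x)$ and weights $\mu_i\ge 0$, $\sum_{i\in I}\mu_i=1$, with $\sum_{i\in I}\mu_i\nabla f_i(\bar x)=0$, i.e. $0\in\partial f(\bar x)$. You then claim this degeneracy is incompatible with the Abadie condition (``the linearized cone collapses onto a subspace strictly larger than the tangent cone of $\argmin f$''). It is not. Take $m=2$, $X=\R$, $f_1(x)=x$, $f_2(x)=-x$, $\bar x=0$: then $0=\tfrac12 f_1'(0)+\tfrac12 f_2'(0)$ is a nontrivial vanishing convex combination of active gradients, yet the Abadie condition holds at $0$ (tangent cone and linearized cone both equal $\{0\}$) and the Lipschitz error bound holds with $\tau=1$, since $\dist(x,S)=|x|=[x]_++[-x]_+$. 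So the conclusion of your limiting argument is perfectly compatible with Abadie, and reaching $0\in\partial f(\bar x)$ produces no contradiction: the proof collapses at precisely this step.

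What the argument actually requires is a uniform lower bound $f'(y,d)\ge c>0$ for $y\in \bd S$ near $\bar x$ and unit $d\in N_S(y)$, equivalently a uniformly bounded \emph{choice} of multipliers. Your inequality $f'(y_k,d_k)\ge 1/\sum_i\lambda_i^k$ shows that, under the contradiction hypothesis, \emph{every} representation blows up, and the limit then only yields the harmless degeneracy above; nothing in the Abadie condition at the limit point (nor at the points $y_k$, which is all you ever use) rules this out. Obtaining the uniform bound is essentially the content of the theorem, and the proof in \cite{NgaiThera05} secures it by a different route, exploiting Abadie at all nearby points of $S$ together with convexity of the whole system, rather than by a compactness-and-limit argument anchored at the single point $\bar x$. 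A secondary issue: the statement is set in a Banach space, while your projection and Carath\'eodory steps are intrinsically finite-dimensional; this would further restrict the scope of your argument even if the main gap were repaired.
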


%Notice that if $f_i, (i=1,\ldots,m)$ are quadratic convex functions then the assertion (ii) holds for all $x\in \R^n$. 
%\subsubsection{Asymptotic constraint qualification}

%\subsection{Characterization of global error bounds for semi-algebraic functions}

\section{Existence and quantitative results}

The first result on local error bound was deduced from the result of H\"ormander, in his work on the fundamental solution of partial differential equation.
\begin{theorem}[H\"omander, 1958] \cite{Hor}\label{Horm}
	Let $f$ be a polynomial function on $\R^n$. With the assumption that $[f\leq 0]$ is nonempty, there exist $\tau>0, \, a>0$ and $b\in\R $ such that 
	$$\dist(x,[f=0])\leq \tau\left(1+\|x\|\right)^{b} |f(x)|^{a}, \forall x\in \R^n.$$
\end{theorem}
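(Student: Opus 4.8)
The plan is to derive the global estimate from the \L ojasiewicz inequality of Theorem~\ref{loja59}, which only delivers a bound on compact sets, by means of a compactification of $\R^n$ that absorbs the behaviour at infinity into the polynomial weight $(1+\|x\|)^b$. Throughout I assume $[f=0]\neq\emptyset$, so that the left-hand side is finite; this is what the nonemptiness hypothesis is really used for.

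First I would set up the local step. Put $\psi(x)=\dist(x,[f=0])$ and $\phi=f$. Since $f$ is a polynomial it is semialgebraic, hence subanalytic, so $[f=0]$ is a subanalytic set and, by the recalled property that the distance to a subanalytic set is subanalytic, $\psi$ is a continuous subanalytic function. Moreover $\phi^{-1}(0)=[f=0]=\psi^{-1}(0)$ because $[f=0]$ is closed, so in particular $\phi^{-1}(0)\subset\psi^{-1}(0)$. Theorem~\ref{loja59} then yields, on every compact semialgebraic set $K$, constants $c_K>0$ and an integer $N_K$ with $c_K\,\dist(x,[f=0])^{N_K}\le |f(x)|$ on $K$, i.e. a H\"older error bound $\dist(x,[f=0])\le c_K^{-1/N_K}|f(x)|^{1/N_K}$ valid on $K$.

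The core of the argument is to turn this family of local bounds into a single global one. I would introduce the diffeomorphism $\Psi(x)=x/\sqrt{1+\|x\|^2}$ from $\R^n$ onto the open unit ball $B$, whose closure $\overline B$ is compact and whose boundary sphere plays the role of the points at infinity. Writing $d=\deg f$, the weighted pullback $g(y)=(1-\|y\|^2)^{d/2}\,f\big(y/\sqrt{1-\|y\|^2}\big)$ extends to a continuous semialgebraic function on $\overline B$, since the weight exactly cancels the blow-up of $f\circ\Psi^{-1}$ at the boundary (concretely, $g(y)=\sum_{|\alpha|\le d} a_\alpha y^\alpha (1-\|y\|^2)^{(d-|\alpha|)/2}$ with nonnegative exponents). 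Applying the compact \L ojasiewicz inequality of Theorem~\ref{loja59} to $g$ and to $\dist(\cdot,[g=0])$ on $\overline B$ produces a single exponent and constant; pulling the resulting inequality back to $\R^n$ through $\Psi$ converts the metric distortion of $\Psi$ together with the weight $(1-\|y\|^2)^{d/2}$ into the factor $(1+\|x\|)^b$, with $a=1/N$, and yields the claimed global bound.

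The main obstacle is the behaviour at infinity, which enters twice. First, $\Psi$ is strongly contracting near infinity, so the Euclidean distance $\dist(x,[f=0])$ on $\R^n$ and the ambient distance $\dist(\Psi(x),\Psi([f=0]))$ on $\overline B$ differ by a factor polynomial in $\|x\|$; I would control this by estimating the Jacobian of $\Psi^{-1}$, which is precisely where the exponent $b$ is generated. Second, and more delicate, the tamed function $g$ may acquire spurious zeros on the boundary sphere, namely at the real directions where the leading form $f_d$ vanishes, so $[g=0]$ can be strictly larger than the closure of $\Psi([f=0])$. One must therefore either argue directly that $\dist(\cdot,\overline{\Psi([f=0])})$ is still \L ojasiewicz-dominated by $|g|$ on $\overline B$, or split the estimate into a bounded region, handled by the compact bound of the second paragraph directly, and a neighbourhood of infinity, handled after the change of variables. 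Reconciling these two regions, and checking that the distance function on $\overline B$ remains subanalytic so that Theorem~\ref{loja59} applies verbatim, is the step that requires the most care.
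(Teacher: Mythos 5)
First, a point of order: the paper does not actually prove Theorem~\ref{Horm} --- it quotes it from H\"ormander and only comments on it --- so your proposal has to stand on its own. Its first half is fine: applying Theorem~\ref{loja59} with $\phi=f$ and $\psi=\dist(\cdot,[f=0])$ is legitimate (since $[f=0]$ is closed, $\phi^{-1}(0)=\psi^{-1}(0)$) and is exactly how the paper derives Theorem~\ref{loja}. The genuine gap is in the globalization step, and it sits precisely where the whole content of H\"ormander's theorem lies. Your compactification does not absorb the behaviour at infinity; it relocates it. On the sphere $\partial B$ one has $g(y)=f_d(y)$, the leading homogeneous part of $f$, so $[g=0]=\Psi([f=0])\cup Z_\infty$ with $Z_\infty=\{y:\|y\|=1,\ f_d(y)=0\}$, and the inequality delivered by Theorem~\ref{loja59} on $\overline B$ controls only $\dist(y,[g=0])=\min\bigl\{\dist(y,\Psi([f=0])),\,\dist(y,Z_\infty)\bigr\}$; near $Z_\infty$ the minimum is realized by the second term, which carries no information about $\dist(x,[f=0])$. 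Of the two repairs you sketch, the first is provably false and the second does not touch the problem. Take the paper's own example $f(x,y)=(xy-1)^2+(x-1)^2$ (used in Section~4.2.1 to show that a H\"older global error bound can fail): here $[f=0]=\{(1,1)\}$, $f_d=x^2y^2$, so $(0,1)\in Z_\infty$. Along $z_k=(1/k,k)$ one has $f(z_k)\to1$, hence $|g(\Psi(z_k))|\sim k^{-4}\to0$, while $\Psi(z_k)\to(0,1)$, so $\dist\bigl(\Psi(z_k),\cl\Psi([f=0])\bigr)$ stays bounded away from $0$: no inequality $c\,\dist(y,\cl\Psi([f=0]))^{N}\le|g(y)|$ can hold on $\overline B$. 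These same points $z_k$ lie in your ``neighbourhood of infinity'' region and satisfy the pulled-back inequality trivially (both sides tend to $0$), yet $\dist(z_k,[f=0])\sim k$; it is exactly for such points that the factor $(1+\|x\|)^{b}$ must be generated, and your scheme never generates it.

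The compactification can be rescued, but it needs one more idea, which is the real crux. Since $[g=0]\subset\partial B\cup\cl\Psi([f=0])$, Theorem~\ref{loja59} does apply on $\overline B$ with $\phi=g$ and $\psi(y)=\dist(y,\partial B)\cdot\dist\bigl(y,\cl\Psi([f=0])\bigr)$ (a product of continuous subanalytic functions), yielding
$$c\,\Bigl(\dist(y,\partial B)\,\dist\bigl(y,\cl\Psi([f=0])\bigr)\Bigr)^{N}\le|g(y)|,\qquad y\in\overline B.$$
Pulling this back, $\dist(\Psi(x),\partial B)\sim(1+\|x\|^{2})^{-1}$, and a (careful) distortion estimate gives $\dist\bigl(\Psi(x),\cl\Psi([f=0])\bigr)\ge \dist(x,[f=0])/C(1+\|x\|)^{3}$; the factor $\dist(y,\partial B)^{N}$ is then exactly what becomes the weight $(1+\|x\|)^{b}$, and one obtains the theorem with $a=1/N$. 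Alternatively, H\"ormander's classical route avoids compactification altogether: apply Tarski--Seidenberg to the two-parameter semialgebraic function $(r,t)\mapsto\sup\{\dist(x,[f=0]):\|x\|\le r,\ |f(x)|\le t\}$ and use growth/Puiseux estimates in both variables --- a two-variable version of the device used in the paper's proof of Theorem~\ref{cond} --- so that the radius $r$ itself enters the bound and produces the weight.
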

This ``error bound" has an extra factor of $\left(1+\|x\|\right)^{b}$. One sees that we can remove this extra factor when restricting the error bound to a bounded region, in that case this local error bound for $f$ can be deduced from. Luo and Luo applied the above theorem to obtain the H\"older local error bound for polynomial function systems \cite[Theorem 2.2]{LuoLuo}, this result was extended for analytic systems, by Luo and Pang \cite[Theorem 2.2]{LuoPang}. Recently, Kurdyka and Spondzieja \cite[Corollary 10]{Kur14} showed that the exponents $a,b$ in Theorem~\ref{Horm} can be computed explicitly: 
$$b=2,\, a=\frac{1}{d(6d-3)^{n-1}}.$$

\paragraph{Result of \L ojasiewicz } A very general local error bound is deduced from the result of \L ojasiewicz, Theorem \ref{loja59}, if we take $\phi(x)=f(x)$ and $\psi(x)=\dist(x,[f\leq 0]) $, we get a local error bound result for subanalytic functions, also called \L ojasiewicz function inequality. It also includes a special case of the polynomial equation studied by H\"ormander.
\begin{theorem}\cite{Loja59}\label{loja}
Let $f\colon\R^n\rightarrow \R$ be a continuous subanalytic function. For any compact set  $K\subset\R^n$, there exist  $\tau>0, a>0$ such that 
	$$\dist(x,[f\leq 0])\leq \tau[f(x)]_+^a, \,\forall x\in K.$$
\end{theorem}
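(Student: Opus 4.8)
The plan is to deduce the statement directly from the \L ojasiewicz inequality (Theorem~\ref{loja59}) via exactly the substitution announced just before the statement, namely $\phi=f$ and $\psi(\cdot)=\dist(\cdot,[f\leq 0])$, and then to repair the two mismatches between the hypotheses of Theorem~\ref{loja59} and the data at hand: $K$ need not be subanalytic, and the right-hand side must involve $[f(x)]_+$ rather than $|f(x)|$.

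First I would assume $[f\leq 0]\neq\emptyset$ (which is implicit here, as in Theorem~\ref{Horm}), so that $\psi$ is a genuine real-valued, $1$-Lipschitz, hence continuous, function. To verify that $\psi$ is subanalytic I would write $[f\leq 0]=f^{-1}((-\infty,0])$ and use the stated elementary properties: $(-\infty,0]$ is semianalytic and $f$ is a continuous subanalytic map, so the inverse image $[f\leq 0]$ is a subanalytic set, and the distance to a subanalytic set is a subanalytic function. Thus both $\phi=f$ and $\psi$ are continuous subanalytic.

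Next I would check the inclusion hypothesis. Since $f$ is continuous, $[f\leq 0]$ is closed, so $\psi^{-1}(0)=\{x:\dist(x,[f\leq 0])=0\}=[f\leq 0]$, whence $\phi^{-1}(0)=[f=0]\subset[f\leq 0]=\psi^{-1}(0)$. To circumvent the possible non-subanalyticity of $K$, I would enclose it in a closed ball $\bar{B}_R(0)\supseteq K$, which is compact and semianalytic, hence a compact subanalytic set, and apply Theorem~\ref{loja59} on $\bar{B}_R(0)$. This yields a constant $c>0$ and an integer $N$ with
$$c\,\dist(x,[f\leq 0])^N\leq |f(x)|,\qquad\forall x\in \bar{B}_R(0)\supseteq K.$$

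It then remains to pass from $|f(x)|$ to $[f(x)]_+$ and to extract the $N$-th root; this is routine bookkeeping on signs rather than a genuine difficulty. For $x\in K$ with $f(x)\leq 0$ one has $x\in[f\leq 0]$, so $\dist(x,[f\leq 0])=0=[f(x)]_+$ and the inequality is trivial; for $f(x)>0$ one has $|f(x)|=f(x)=[f(x)]_+$, and the displayed bound gives $\dist(x,[f\leq 0])\leq c^{-1/N}[f(x)]_+^{1/N}$. Setting $\tau=c^{-1/N}$ and $a=1/N>0$ then yields the claim on all of $K$. The only genuinely delicate point is establishing the subanalyticity of the sublevel set $[f\leq 0]$, and hence of $\psi$, so that Theorem~\ref{loja59} is applicable; everything else is the sign analysis and the enclosure of $K$ in a compact subanalytic set.
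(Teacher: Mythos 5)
Your proof is correct and takes essentially the same route as the paper, which deduces the statement in one line by the very substitution you use: $\phi=f$ and $\psi(\cdot)=\dist(\cdot,[f\leq 0])$ in Theorem~\ref{loja59}. The details you supply --- subanalyticity of $\psi$ via the inverse-image and distance-function properties, enclosing $K$ in a compact subanalytic ball, and the sign bookkeeping replacing $|f(x)|$ by $[f(x)]_+$ with $a=1/N$ --- are exactly the verifications the paper leaves implicit.
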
 

With a direct application of the Theorem~\ref{loja} to a subanalytic system, we recover a result of Luo and Pang \cite[Theorem 2.2]{LuoPang}, in which they obtained the similar result for an analytic system:

\begin{theorem}\label{LuoPang}
Let $f_1,f_2,\ldots, f_r$ and $g_1,g_2,\ldots ,g_s$ be continuous subanalytic functions on $\R^n$, set
	$$S=\left\lbrace x\in \R^n| f_i(x)\leq 0,i=1,\ldots, r; g_j(x)=0, j=1,\ldots s \right\rbrace.$$
Then, for each compact set $K\subset \R^n$, there exist $\tau>0, a>0$ such that 
	$$\dist(x,S)\leq \tau\left( \|[f(x)]_+\| + \|g(x)\|\right)^a, \,\forall x\in K,$$
where $f(x)=\left(f_1(x),\ldots, f_r(x)\right)$,\, $g(x)=\left(g_1(x),\ldots, g_s(x)\right)$.
\end{theorem}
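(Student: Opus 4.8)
The plan is to reduce the multi-function system to a single scalar subanalytic function and then invoke the \L ojasiewicz function inequality (Theorem~\ref{loja}) directly. First I would define the aggregate function
$$
F(x)=\left\|[f(x)]_+\right\|+\left\|g(x)\right\|=\left(\sum_{i=1}^r [f_i(x)]_+^2\right)^{1/2}+\left(\sum_{j=1}^s g_j(x)^2\right)^{1/2},
$$
and observe that $F$ is nonnegative, continuous, and subanalytic: each $f_i,g_j$ is subanalytic by hypothesis, the map $t\mapsto [t]_+=\max\{t,0\}$ is subanalytic (its graph is a finite union of affine pieces, hence semianalytic), composition and finite sums of subanalytic functions are subanalytic by the stability properties recalled after the definition of subanalyticity, and the square-root of a nonnegative subanalytic function is again subanalytic. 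The key identity to check is the set equality
$$
S=[F\leq 0]=[F=0],
$$
which holds because $F(x)=0$ forces every $[f_i(x)]_+=0$ (i.e. $f_i(x)\leq 0$) and every $g_j(x)=0$, and conversely; thus $[F\leq 0]$ is exactly $S$.

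Once $F$ is shown to be a continuous subanalytic function with $[F\leq 0]=S$, I would apply Theorem~\ref{loja} to $F$ on the given compact set $K$: there exist $\tau_0>0$ and $a>0$ such that
$$
\dist(x,[F\leq 0])\leq \tau_0\,[F(x)]_+^{\,a},\quad\forall x\in K.
$$
Since $F\geq 0$ we have $[F(x)]_+=F(x)=\|[f(x)]_+\|+\|g(x)\|$, and since $[F\leq 0]=S$, the left-hand side is precisely $\dist(x,S)$. Setting $\tau=\tau_0$ yields exactly the claimed inequality
$$
\dist(x,S)\leq \tau\left(\|[f(x)]_+\|+\|g(x)\|\right)^a,\quad\forall x\in K.
$$

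The main obstacle, and the only step requiring genuine care, is the verification that $F$ is subanalytic. The additive combination and the squaring are routine given the closure properties listed in the preliminaries, but two points deserve attention: that $t\mapsto[t]_+$ is subanalytic (so that the composition $x\mapsto[f_i(x)]_+$ stays in the class), and that taking a square root preserves subanalyticity on the nonnegative domain. Both follow from the fact that these are semianalytic operations whose graphs are described by analytic equalities and inequalities, so composing them with subanalytic maps keeps us inside the subanalytic class. A cosmetic alternative that sidesteps the square roots would be to work instead with the polynomially equivalent quantity $\tilde F(x)=\sum_i[f_i(x)]_+^2+\sum_j g_j(x)^2$, prove the \L ojasiewicz inequality for $\tilde F$, and then absorb the difference between $\tilde F$ and $F^2$ into the exponent and constant using the equivalence of these expressions on the compact set $K$; but the direct route through $F$ is cleaner since it matches the right-hand side of the statement verbatim.
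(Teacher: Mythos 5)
Your proposal is correct and coincides with the paper's own (one-line) argument: the paper proves Theorem~\ref{LuoPang} precisely by aggregating the system into a single continuous subanalytic function whose zero sublevel set is $S$ and then applying Theorem~\ref{loja} on the compact set $K$. Your write-up simply makes explicit the two points the paper glosses over, namely that $[F\leq 0]=S$ and that $F$ remains subanalytic under $[\cdot]_+$, sums, and square roots, so it is the same proof with the routine verifications filled in.
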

We mention that in all the above results of error bound, the H\"older exponent is not unknown, even in the result of Luo and Luo \cite{LuoLuo} for polynomial function systems.
%We mention that the inequality in Corollary~\ref{loja} may fail hold when $K$ is not compact. We are considering the local error bound for polynomial. First, we refer to the result of H\"ormander \cite{Hor}.{\color{red} Le resultat de Hormander est plus important que les precedents, d'ailleurs connaissait il les travaux de Lojasiewics?? Hormander est une medaille Field quand meme....}

\subsection{Local error bound for polynomial}
%{Quantitative results, estimation of \L ojasiewicz exponents}
We are now interested in the estimation of exponents within error bounds. First, we present the result of Gwozdziewicz \cite{Gwo}, in which a local quantitative error bound for a single real polynomial function with a isolated zero is provided.

For each $n, d\in \N $, we set
$$\kappa (n,d)=(d-1)^n+1 \text{ and } 
R(n,d)=\begin{cases}
1& \text{ if }d=1\\
d(3d-3)^{n-1}& \text{ if }d\geq 2.
\end{cases}$$
%\begin{theorem}\cite{Acukur}
%Let f be a real polynomial on $R^n$ with degree $d\in \N$. Suppose that $f(0)=0$ and $\nabla f(0)=0$. Then there exist constants $\tau, \varepsilon >0$ such that 
%$$\|x\|\leq \tau f(x)^{\frac{1}{R(n,d)}},$$
%for all $\|x\|\leq \varepsilon$.
%\end{theorem}

\begin{theorem}\cite{Gwo}\label{Gwo}
Let $f$ be a polynomial function on $\R^n$ with degree $d$. Assume that $x=0$ is an isolate zero of $f$, this means $f(0)=0$ and there is $\delta>0$ with $f(x)\ne 0$, for all $x\in B_\delta(0)\backslash \{0\} $. Then there exist positive constants $\tau, \varepsilon$ such that 
$$\|x\|\leq \tau |f(x)|^{\frac{1}{\kappa(n,d)}},$$
for all $\|x\|\leq \varepsilon$.
\end{theorem}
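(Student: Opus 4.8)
The plan is to prove the equivalent reverse inequality $|f(x)|\ge c\|x\|^{\kappa(n,d)}$ for all small $\|x\|$, from which the statement follows with $\tau=c^{-1/\kappa(n,d)}$. As a warm-up and base case, if $n=1$ then $f(x)=x^k g(x)$ with $g(0)\ne 0$ and $k\le d=\kappa(1,d)$, so $|f(x)|\ge \tfrac12|g(0)|\,|x|^k\ge c|x|^d$ for $|x|$ small. For $n\ge 2$ the punctured ball $B_\delta(0)\setminus\{0\}$ is connected and $f$ is continuous and nonvanishing there, hence $f$ has constant sign; since the claim involves only $|f|$, I may replace $f$ by $-f$ and assume $f\ge 0$ near $0$, so that $0$ is a strict local minimum.

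The structural observation driving the argument is that $0$ is also an \emph{isolated} zero of the gradient map $\nabla f$. Indeed, if critical points accumulated at $0$, the curve selection lemma would furnish an analytic arc $\gamma$ with $\gamma(0)=0$, $\gamma(t)\ne 0$ for $t>0$, and $\nabla f(\gamma(t))\equiv 0$; but then $\tfrac{d}{dt}f(\gamma(t))=\langle \nabla f(\gamma(t)),\dot\gamma(t)\rangle\equiv 0$ forces $f(\gamma(t))\equiv f(0)=0$, contradicting that $0$ is an isolated zero of $f$. Granting a gradient lower bound $\|\nabla f(z)\|\ge c\|z\|^{s}$ on a punctured neighbourhood, I would transfer it to $f$ by integrating along the steepest-descent trajectory $\dot\gamma=-\nabla f(\gamma)$, $\gamma(0)=x$. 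Since $0$ is the unique nearby critical point and the unique zero, standard arguments via the \L ojasiewicz gradient inequality guarantee $\gamma(t)\to 0$ with finite length, and
$$f(x)=\int_0^{\infty}\|\nabla f(\gamma(t))\|^2\,dt=\int_{\gamma}\|\nabla f\|\,ds\ge \Big(\min_{\|x\|/2\le\|z\|\le\|x\|}\|\nabla f(z)\|\Big)\frac{\|x\|}{2}\ge c'\|x\|^{s+1},$$
the factor $\|x\|/2$ being a lower bound for the length of the portion of the trajectory crossing the annulus $\{\|x\|/2\le\|z\|\le\|x\|\}$. This integration is exactly what produces the ``$+1$'' in the final exponent.

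It then remains to bound the \L ojasiewicz exponent $s$ of the polynomial map $\nabla f=(\partial_1 f,\dots,\partial_n f)$, whose components have degree at most $d-1$ and which has an isolated zero at $0$: I must show one may take $s=(d-1)^n$, i.e. $\|\nabla f(z)\|\ge c\|z\|^{(d-1)^n}$. Combined with the previous step this yields $|f(x)|\ge c'\|x\|^{(d-1)^n+1}=c'\|x\|^{\kappa(n,d)}$, which is the assertion. This degree bound on the exponent of a map with isolated zero is the heart of the matter and the step I expect to be hardest. My plan would be an induction on $n$ that eliminates one coordinate at a time: passing to the complexification and using resultants to project the common-zero system, the degrees of the successive eliminants multiply by $d-1$ at each of the $n$ steps, delivering the product bound $(d-1)^n$ via a Bézout-type count. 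A consistency check: for $n=1$ the ``map'' is the single polynomial $f'$ of degree $\le d-1$, its exponent is its order of vanishing at $0$, at most $d-1$, and the integration step returns $d=\kappa(1,d)$, in agreement with the warm-up. An alternative, should the elimination estimate prove delicate, is to work directly with the polar variety $\{x:x_i\partial_j f(x)-x_j\partial_i f(x)=0\}$, on which the minimum of $|f|$ over each sphere is attained, parametrise its branches by Newton--Puiseux series, and bound the resulting ratios of orders by the same degree count.
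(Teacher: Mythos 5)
The survey states this theorem without proof (it is quoted from [Gwo]), so your argument has to stand on its own; judged that way, it has a genuine gap exactly at the step you flag as the heart of the matter. Your preliminary reductions are fine: the constant sign of $f$ on the punctured ball for $n\ge 2$, the curve-selection argument showing that $0$ is an isolated zero of $\nabla f$, and the gradient-flow integration converting a bound $\|\nabla f(z)\|\ge c\|z\|^{s}$ into $|f(x)|\ge c'\|x\|^{s+1}$ are all correct (the last is the standard \L ojasiewicz finite-length argument). But the quantitative core --- that one may take $s=(d-1)^n$ --- is not proved, and the sketch you give for it would fail. First, the complexification of $\nabla f$ need not have an isolated zero at $0$ even under the hypotheses of the theorem: for $f(x,y,z)=(x^2+y^2+z^2)^2+z^4$ (degree $4$, isolated real zero at the origin), the complex zero set of $\nabla f$ contains the lines $\{(t,\pm it,0):t\in\mathbb{C}\}$, so there is no finite local intersection multiplicity at $0$, a B\'ezout-type count does not apply, and the resultants you propose vanish identically along such a component. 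Second, even in the good (complex-isolated) case, iterated one-variable elimination does not make degrees ``multiply by $d-1$ at each step'': the resultant of two polynomials of degree at most $D$ has degree of order $D^2$ in the remaining variables, so $n-1$ successive eliminations give a doubly exponential bound of the shape $(d-1)^{2^{n-1}}$, not $(d-1)^n$. Obtaining the product bound requires genuine multivariate intersection/multiplicity theory, which is precisely what is unavailable when the complex zero is not isolated.

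The deeper structural problem is that you discard the gradient structure and reduce to a claim about an \emph{arbitrary} real polynomial map with an isolated real zero. That general claim is exactly the delicate point for real (as opposed to complex) systems: compare Koll\'ar's theorem quoted in the same section of the paper, where the exponent for real polynomial systems with an isolated zero is $d^n\beta(n-1)$ with $\beta(n-1)=\binom{n-1}{[\frac{n-1}{2}]}$ --- the extra central binomial factor is there precisely because a real-isolated zero does not behave like a complex-isolated one. Gwozdziewicz's proof does not treat $\nabla f$ as a general map; it exploits the relation between $f$ and its gradient through the polar variety $\{x:\ x_i\partial_jf(x)-x_j\partial_if(x)=0\}$, on which the minima of $|f|$ over small spheres lie, bounding orders along its Puiseux branches --- the alternative you mention in a single closing sentence, but again only with the unproven ``same degree count.'' As written, your argument reproves the classical qualitative \L ojasiewicz inequality near an isolated zero, but the exponent $\kappa(n,d)=(d-1)^n+1$, which is the entire content of the theorem, remains unestablished.
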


A similar result for polynomial function system was given by Koll\'ar in \cite{Kollar}.  
\begin{theorem}\cite{Kollar}\label{Kolla}
	Let $f_1,\ldots,f_m$ be some polynomial functions on $\R^n$ whose degrees do not exceed $d$. Set $f(x)=\max_{i=1,\ldots,m} f_i(x)$ for $x$ in $\R^n$. Assume that there is $\delta>0$ such that $f(x)=0$ and $f(x)\ne 0, \,\forall x\in B_\delta(0)\backslash \{0\}$. Then there exist $\tau, \varepsilon$ such that 
	$$\|x\|\leq \tau |f(x)|^{\frac{1}{d^n\beta(n-1)}}, \,\text{for all }x \text{ such that }\| \,x\|\leq \varepsilon,$$
	where $$\beta(n-1)=\begin{pmatrix}
	n-1\\
	[\frac{n-1}{2}]
	\end{pmatrix}.$$
\end{theorem}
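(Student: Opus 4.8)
The inequality splits cleanly into an \emph{existence} part and an \emph{explicit-exponent} part, and only the latter is substantial. For existence with some unspecified exponent I would simply invoke the \L ojasiewicz inequality of Theorem~\ref{loja59}: take $\phi=f$ and $\psi(x)=\|x\|$, and restrict to the compact set $K=\cl B_\delta(0)$. The isolated-zero hypothesis says exactly that $\phi^{-1}(0)\cap K=\{0\}=\psi^{-1}(0)\cap K$, so $\phi^{-1}(0)\subset\psi^{-1}(0)$ on $K$, and Theorem~\ref{loja59} yields an integer $N$ and $c>0$ with $c\|x\|^{N}\le |f(x)|$ for $\|x\|\le\delta$, i.e. $\|x\|\le (1/c)^{1/N}|f(x)|^{1/N}$. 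Thus the entire difficulty of the statement is to produce the explicit \emph{value} $N=d^{n}\beta(n-1)$.

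To control the exponent I would pass to the arc description of the \L ojasiewicz exponent. The smallest admissible $N$ at the isolated zero is the reciprocal \L ojasiewicz exponent, given by the supremum, over real-analytic arcs $\gamma(t)$ with $\gamma(0)=0$ and $\gamma(t)\ne 0$, of $\operatorname{ord}_t f(\gamma(t))/\operatorname{ord}_t\|\gamma(t)\|$; this reduction is the usual curve-selection argument and already underlies the single-polynomial result of Theorem~\ref{Gwo}. The problem then becomes: bound the orders of vanishing that can occur along such arcs purely in terms of $n$ and $d$. Here I would split $B_\delta(0)$ into the sign regions $\{f>0\}$ and $\{f<0\}$; on each region $f$ coincides with one of the $f_i$ (the active selection), so the worst arc is governed by the geometry of the hypersurfaces $\{f_i=f_j\}$ and the polar varieties of the $f_i$. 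Degree bounds for these auxiliary varieties, fed through B\'ezout's theorem, produce an explicit ceiling on the admissible orders and hence on $N$.

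The main obstacle is obtaining the \emph{sharp} combinatorial factor $\beta(n-1)=\binom{n-1}{[(n-1)/2]}$ rather than a crude product-of-degrees bound. A direct B\'ezout estimate on the polar system gives something like $d^{n}$ times a gross factor; extracting the middle binomial coefficient requires Koll\'ar's refined effective Nullstellensatz, where $\binom{n-1}{[(n-1)/2]}$ arises from the sharp degree bounds in representing a power of the maximal ideal at $0$ through the system $(f_i)$. This effective algebraic input is the technical heart and is exactly what distinguishes the theorem from the elementary existence statement above. A secondary, bookkeeping-level difficulty is the passage between the real $\max$-formulation and the complex algebraic computations: since $|f(x)|$ dominates $|f_i(x)|$ only on the region where $f_i$ is active, one must run the estimate on each sign region separately and then glue the two estimates using the isolated-zero hypothesis.
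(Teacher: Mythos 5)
The paper itself offers no proof of this statement: Theorem~\ref{Kolla} is quoted directly from Koll\'ar \cite{Kollar}, so there is no internal argument to compare yours against. Judged on its own terms, your proposal has a genuine gap at exactly the point you yourself call ``the technical heart.'' Your first paragraph (existence of \emph{some} exponent) is essentially fine modulo a technical slip: Theorem~\ref{loja59} as stated requires the inclusion $\phi^{-1}(0)\subset\psi^{-1}(0)$ to hold on all of $\R^n$, not merely on $K$, and $f$ may well have zeros outside $B_\delta(0)$, so the theorem cannot be applied verbatim with $\phi=f$; one must first localize, e.g.\ apply it to $\tilde\phi(x)=|f(x)|+\dist\bigl(x,\cl B_{\delta/2}(0)\bigr)$, whose full zero set is $\{0\}$ and which agrees with $|f|$ on $K=\cl B_{\delta/2}(0)$. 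That is fixable. What is not fixable within your outline is the second part: to obtain the exponent $d^n\beta(n-1)$ you invoke ``Koll\'ar's refined effective Nullstellensatz, where $\binom{n-1}{[(n-1)/2]}$ arises from the sharp degree bounds.'' That is precisely the content of the theorem being proved; the cited result \emph{is} Koll\'ar's, and its proof consists of establishing that effective estimate. Appealing to it as a black box reduces the theorem to itself.

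Concretely, a complete proof would have to carry out the steps you only gesture at: (a) justify by curve selection that the local \L ojasiewicz exponent is computed along real-analytic arcs; (b) reduce the max-function to polynomial data --- note that the paper's own device for this, Lemma~\ref{Loja_poly}, replaces $\max_i f_i$ by the affine combinations $\sum_i\lambda_i f_i$ at the cost of extra variables, which is exactly why the exponents obtained there are of the form $R(n+r-1,d+1)$ rather than $d^n\beta(n-1)$, so that route does not yield the stated constant; and (c) prove the sharp degree bound producing the factor $d^n\binom{n-1}{[(n-1)/2]}$. Step (c) is the substance of \cite{Kollar} and, as you concede, is not delivered by the B\'ezout-plus-polar-varieties count in your second paragraph, which gives a strictly worse bound. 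So your proposal correctly maps where the difficulty lies, but it does not contain a proof of the statement.
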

Without the assumption of isolated zero point, Kurdyka and Spodzieja \cite[Corollary 4]{Kur14} (see also \cite{Son12}) obtained an error bound for a polynomial function with the H\" older exponent
$a=R^{-1}(n,d).$

To our knowledge, these are the first general results on error bounds  with some estimations of the exponent. Some applications of these above results can be found in \cite{Borwein, Li10, Li, LiMorPham, Ngai,LMNP}. 
%In \cite{Borwein}, by combining Theorem \ref{Gwo} and Theorem \ref{Kolla}, the authors provided a local error bounds for convex polynomial system:
%\begin{theorem}\cite{Borwein}
%Let $f_1,\ldots,f_m$ be some polynomial on $\R^n$ whose degrees do not exceed $d$. Set $S=\{x\in \R^n| f_i(x\leq 0, \forall i=1, \ldots, m)\}$ and
%$$J_0=\{i\in \{1,\ldots,m\}|f_i(x)=0, \forall x\in S \}, \quad J_1=\{1,\ldots,m\}\setminus J_0.$$	
%Then, for any $\bar{x}\in S$, there exist $c>0$, $\varepsilon>0$ such that
%$$\dist(x,S)\leq c\left(\max_{J_1}[f_i(x)]_++\left(\max_{J_0} [f_j(x)]_+\right)^\theta \right), \text{ for all } \|x-\bar{x}\|\leq \varepsilon,$$ 
%where $$\theta=\max\left\lbrace\frac{2}{\kappa(n,2d)},\frac{1}{d^n\beta(n-1)} \right\rbrace$$
%\end{theorem}

In \cite{LiMorPham}, Li, Mordukhovich and Pham, gave local error bounds for polynomial function systems in the nonconvex case, with exponents explicitly determined by the dimension of the underlying space and the degree of the involved polynomial functions. In this work, they obtained two results, one is based on \L ojasiewicz gradient inequality, and the other result is proved with a technique similar to that of Theorem~\ref{LuoPang}.
\begin{theorem}\cite{LiMorPham} \label{LMP1}. Let $f_1, \ldots, f_r$ and $g_1,\ldots, g_s$ be real polynomial functions on $\R^n$ with degree at most $d$, and let 
	$$S=\left\lbrace x\in \R^n| f_i(x)\leq 0,\colon g_j(x)=0 \right\rbrace.$$ 
Then for each $\bar{x}\in S$ there exist $\tau>0,\,\varepsilon>0$ such that 
\begin{equation}\label{loja1}
	\dist(x,S)\leq \tau\left( \sum\limits_{i=1}^{r}[f_i(x)]_++\sum\limits_{j=1}^{s}|g_i(x)|\right)^\frac{1}{R(n+r+s,d+1)},\text{ with } \|x-\bar{x}\|\leq \varepsilon.
\end{equation}
\end{theorem}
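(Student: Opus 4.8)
The plan is to reduce this multi-constraint polynomial error bound to a single-function \Lojasiewicz inequality by encoding the entire system as one polynomial on an enlarged space, and then invoke the quantitative \Lojasiewicz gradient inequality whose exponent has already been computed explicitly in terms of $R(\cdot,\cdot)$. The guiding idea is that the quantity $\sum_{i=1}^r [f_i(x)]_+ + \sum_{j=1}^s |g_j(x)|$ measures infeasibility of the system, and that its vanishing locus is exactly $S$; the task is to control $\dist(x,S)$ by a power of this infeasibility measure, with the power being the reciprocal of $R(n+r+s, d+1)$.

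First I would introduce slack variables to convert inequalities into equalities: for each constraint $f_i(x)\le 0$ I add a variable $y_i$ and consider $f_i(x)+y_i^2$, so that $f_i(x)\le 0$ holds iff there is a real $y_i$ with $f_i(x)+y_i^2=0$, and similarly I keep the $g_j(x)=0$ directly. This passes to the space $\R^{n+r}$ (and the equalities $g_j$ contribute to the count), producing a polynomial system on $\R^{n+r+s}$ variables whose common real zero set projects onto $S$; the degrees increase from $d$ to at most $d+1$ because of the squared slacks, which is exactly why the exponent involves $R(n+r+s, d+1)$. Next I would aggregate the augmented system into a single nonnegative polynomial, for instance $F(x,y)=\sum_i (f_i(x)+y_i^2)^2 + \sum_j g_j(x)^2$, whose zero set is precisely the lifted feasible set, and apply the explicit \Lojasiewicz error bound of Kurdyka and Spodzieja (the exponent $a=R^{-1}(n,d)$ recorded just before this statement) on a ball around the lifted point $(\bar x,\bar y)$.

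The core estimate is then to relate the distance in the lifted space to the distance in the original space and to relate $F$ to the infeasibility measure. On one hand $\dist\big((x,y),[F\le 0]\big)$ dominates $\dist(x,S)$ after projecting out the slack variables and choosing $y$ near the optimal slack values; on the other hand, on a bounded neighbourhood the polynomial $F(x,y)$ is bounded above by a constant multiple of $\big(\sum_i [f_i(x)]_+ + \sum_j |g_j(x)|\big)$ raised to an appropriate power, using that on a compact set a polynomial and its positive part are comparable up to constants and exponents. Threading the \Lojasiewicz exponent through these comparisons yields the bound with power $1/R(n+r+s,d+1)$ and some localization radius $\varepsilon$.

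The hard part will be the bookkeeping that keeps the exponent sharp: the slack-variable lift inflates both the ambient dimension (by $r$, since each inequality costs one slack) and the degree (by one, from the squares), and I must verify that the claimed exponent $R(n+r+s,d+1)$ is exactly what the lift produces rather than a cruder bound. A subtler point is ensuring the passage back from $\dist\big((x,y),\cdot\big)$ to $\dist(x,S)$ does not lose a power — this requires that near $\bar x$ one can choose the slack coordinate $y_i$ continuously (roughly $y_i=\sqrt{[-f_i(x)]_+}$) so that the lifted point stays close to the lifted feasible set at a rate controlled by $[f_i(x)]_+$ itself, and that the square-root in the slack does not degrade the final H\"older exponent beyond what $R(n+r+s,d+1)$ already absorbs. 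Since the theorem is attributed to Li, Mordukhovich and Pham and is stated as following the \Lojasiewicz-gradient route, I would expect the authors' proof to lift the system and apply the quantitative gradient inequality exactly along these lines.
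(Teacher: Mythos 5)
Your route --- slack variables $y_i$ for the inequalities, aggregation into a single polynomial $F(x,y)=\sum_i\bigl(f_i(x)+y_i^2\bigr)^2+\sum_j g_j(x)^2$, then the Kurdyka--Spodzieja error bound for $F$ --- is a legitimate way to get \emph{an} error bound, but it cannot produce the exponent claimed in the statement, and the bookkeeping you yourself flag as ``the hard part'' is exactly where it breaks. First, the slack lift lives on $\R^{n+r}$, not $\R^{n+r+s}$: the equalities $g_j=0$ need no slack variables and contribute no new coordinates. Second, squaring does not take the degree from $d$ to $d+1$: each $f_i(x)+y_i^2$ has degree $\max(d,2)$, and the sum-of-squares aggregate $F$ has degree roughly $2d$, so what this route delivers --- after the additional square coming from $F(x,y^*)\le\bigl(\sum_i[f_i(x)]_++\sum_j|g_j(x)|\bigr)^2$ with the optimal slacks $y_i^*=\sqrt{[-f_i(x)]_+}$ --- is an exponent of the form $2/R(n+r,2d)$. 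That is precisely the exponent of Theorem~\ref{LMP2}, which the paper attributes to the Luo--Pang/Luo--Luo technique you are reproducing; it is in general different from (and incomparable with) the claimed $1/R(n+r+s,d+1)$. So as a proof of Theorem~\ref{LMP1} there is a genuine gap: no amount of care in your comparisons will turn $2/R(n+r,2d)$ into $1/R(n+r+s,d+1)$.

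The paper's proof uses a different lift, and that difference is the whole point. It encodes the system as the max function $f=\max\{0,f_1,\ldots,f_r,g_1,-g_1,\ldots,g_s,-g_s\}$ (the two signs of each $g_j$ are handled through the $2^s$ auxiliary functions $f_e$, $e\in\{-1,1\}^s$), and proves a \L ojasiewicz \emph{gradient} inequality for a max of $q=r+s+1$ polynomials (Lemma~\ref{Loja_poly}). There the auxiliary variables are not slacks but the $q-1=r+s$ convex-combination multipliers $\lambda$ arising from $\partial f(x)$: the lifted polynomial $F_I(x,\lambda)=\sum_j\lambda_jf_{i_j}(x)+\bigl(1-\sum_j\lambda_j\bigr)f_{i_q}(x)$ lives on $\R^{n+r+s}$ and has degree at most $d+1$ because it is \emph{linear} in $\lambda$ --- this is the true source of both the ``$+r+s$'' in the dimension and the ``$+1$'' in the degree. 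Applying the D'Acunto--Kurdyka gradient inequality (Theorem~\ref{kur}) to $F_I$, observing that at active indices the $\lambda$-derivatives vanish so that $\|\nabla F_{I(x)}(x,\lambda)\|=\dist(0,\partial f(x))$ and $F_{I(x)}(x,\lambda)=f(x)$, and then converting the resulting Kurdyka--\L ojasiewicz inequality into an error bound via Corollary~\ref{localnonlinear} with $\varphi(s)=s^{1/R(n+r+s,d+1)}$ yields the stated exponent. The missing idea in your proposal is thus the multiplier lift combined with the gradient-inequality-to-error-bound machinery, as opposed to the slack lift combined with a direct error bound for one aggregated polynomial.
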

%{\color{red} Je ne comprends pas du tout cette phrase, a reprendre: The authors proved \L ojasiewicz inequality for maximum of finitely many polynomials and then combined with Corollary \ref{Expo} to obtain \ref{loja1}. Before beginning the proof (OF WHAT), let us recall a result of D'Acunto and Kurdyka \cite{Acukur}, which established \L ojasiewicz inequality for polynomial.}
Before beginning the proof of the latter theorem, let us recall a result of D'Acunto and Kurdyka \cite{Acukur}, which established \L ojasiewicz gradient inequality for polynomial function.
\begin{theorem}\cite{Acukur}\label{kur}
Let $f$ be a polynomial function with degree $d$, suppose that $f(0)=0$. There exists $c>0,\,\varepsilon>0$ such that
	$$\|\nabla f(x)\|\geq \tau |f(x)|^{1-\frac{1}{R(n,d)}},\text{ with } \|x\|\leq \varepsilon.$$
\end{theorem}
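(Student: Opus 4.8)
The plan is to establish the inequality at the single zero $0$ of $f$ (the statement is purely local, and the argument at any other zero is verbatim the same), and to recast it as a bound on one exponent. Writing $\theta = 1 - 1/R(n,d)$, I want $\|\nabla f(x)\| \ge \tau\,|f(x)|^{\theta}$ on a punctured ball around $0$. The governing principle is that such a \L ojasiewicz-type estimate is controlled by the behaviour of $f$ along one-dimensional arcs issuing from $0$, so the whole problem collapses to bounding the order of vanishing of $f$ along a suitable extremal curve.

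First I would argue by contradiction via a curve selection argument. If the inequality failed for every constant $\tau$, the semialgebraic sets $\{x : 0<\|x\|<1/k,\ \|\nabla f(x)\| < (1/k)\,|f(x)|^{\theta}\}$ would all be nonempty, and a curve selection lemma would produce an analytic arc $\gamma(s)$ with $\gamma(0)=0$, $\gamma(s)\neq 0$ for $s>0$, along which $\|\nabla f(\gamma(s))\|\,/\,|f(\gamma(s))|^{\theta}\to 0$. Parametrising $\gamma$ by arc length and expanding $f(\gamma(s)) = c\,s^{p} + o(s^{p})$ with $p\ge 1$ and $c\neq 0$, the chain rule together with Cauchy--Schwarz give the key lower bound
$$\|\nabla f(\gamma(s))\| \ \ge\ \big|\langle \nabla f(\gamma(s)),\gamma'(s)\rangle\big| \ =\ \Big|\tfrac{d}{ds} f(\gamma(s))\Big| \ =\ |c|\,p\,s^{p-1}\,(1+o(1)).$$
Comparing with $|f(\gamma(s))|^{\theta} = |c|^{\theta}s^{p\theta}(1+o(1))$, the ratio behaves like $s^{\,p-1-p\theta}$, so the assumed convergence to $0$ forces the exponent to be positive, $p-1-p\theta>0$, i.e. $\theta < 1-1/p$. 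Hence the theorem amounts exactly to the order-of-contact bound $p\le R(n,d)$: if it holds then $\theta = 1-1/R(n,d)\ge 1-1/p$, which contradicts $\theta<1-1/p$ and rules out the failure, so the inequality is valid.

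Everything therefore reduces to bounding $p$, and this is where I expect the real difficulty and the explicit value $R(n,d)=d(3d-3)^{n-1}$ to originate. The naive curve selection lemma produces an arbitrary analytic arc, whose order of contact with $[f=0]$ is a priori unbounded, so the decisive refinement is to force the extremal arc onto an algebraic curve of controlled degree. I would realise the worst-case arc as a branch of the critical locus of $\|\nabla f\|^{2}$ restricted to the level sets of $f$ (equivalently the talweg, where $\|\nabla f\|$ is smallest on each level set), which is cut out by the vanishing of suitable minors built from $\nabla f$ and $\nabla^2 f$ together with the level condition. In D'Acunto and Kurdyka's construction these defining equations have degree bounded by $3d-3$, and intersecting down to a one-dimensional set in $\R^{n}$ requires $n-1$ of them, so a B\'ezout--Milnor--Thom degree count bounds the degree of the relevant curve by $(3d-3)^{n-1}$.

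Finally, I would bound the order of vanishing of the degree-$d$ polynomial $f$ along a branch of an algebraic curve of degree $\le (3d-3)^{n-1}$ by the product $d\cdot(3d-3)^{n-1}=R(n,d)$; intuitively, restricting $f$ to a Puiseux parametrisation of the branch multiplies the two degrees. Feeding $p\le R(n,d)$ into the exponent comparison above closes the argument. The genuinely delicate points --- and the heart of D'Acunto and Kurdyka's work --- are (i) verifying that the extremal arc really can be taken on this bounded-degree algebraic locus rather than on some transcendental gradient trajectory, and (ii) making the degree and order-of-contact counts effective and uniform over all branches, controlling in particular the Puiseux denominators, via quantitative B\'ezout / Oleinik--Petrovsky--Thom--Milnor estimates. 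The curve selection and the Puiseux bookkeeping are routine once those effective bounds are secured.
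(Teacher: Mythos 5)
The first thing to flag is that the paper itself offers no proof of Theorem~\ref{kur}: it is imported wholesale from D'Acunto and Kurdyka \cite{Acukur} and used as a black box in Lemma~\ref{Loja_poly} and Theorem~\ref{LMP1}. So your attempt can only be measured against the original argument, and against that standard it is a roadmap, not a proof. The part you actually carry out is fine: the bad sets are semialgebraic, curve selection applies (with the routine caveats that the arc-length reparametrisation is a Puiseux, not analytic, parametrisation, and that one must first shrink the ball so it contains no critical point with nonzero critical value, which is possible since a polynomial has finitely many critical values), and the Cauchy--Schwarz/chain-rule bound $\|\nabla f(\gamma(s))\|\ge\bigl|\tfrac{d}{ds}f(\gamma(s))\bigr|$ together with the exponent comparison correctly reduces the theorem to the order-of-contact bound $p\le R(n,d)$ along the extremal arc.

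The genuine gap is everything after that reduction. The two claims that carry the entire quantitative content --- (i) that the worst-case behaviour is realised on an algebraic curve of degree at most $(3d-3)^{n-1}$, and (ii) that a degree-$d$ polynomial vanishes to order at most $d(3d-3)^{n-1}=R(n,d)$ along a branch of such a curve --- \emph{are} the theorem, and you do not prove either; deferring them to ``the heart of D'Acunto and Kurdyka's work'' is circular when their theorem is what you are asked to prove. Moreover, the one bridging step you do commit to does not go through as stated: an arc produced by curve selection in the bad set is an arbitrary Puiseux arc, and nothing ``forces'' it onto the talweg. The correct organisation (and the one in \cite{Acukur}) dispenses with curve selection altogether and works from the start with $m(t)=\min\{\|\nabla f(x)\| : f(x)=t,\ \|x\|\le\varepsilon\}$, whose minimisers constitute the talweg; a lower bound along the talweg then propagates to every point via $\|\nabla f(x)\|\ge m(f(x))$. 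Finally, your degree bookkeeping is fitted to the answer rather than derived: by your own accounting, the $2\times 2$ minors of the matrix with columns $\nabla^2 f\,\nabla f$ and $\nabla f$ have degree at most $(2d-3)+(d-1)=3d-4$, not $3d-3$; and there are $\binom{n}{2}$ such minors, not $n-1$, so the rank-one locus can have excess-dimensional components and bounding the degree of the relevant one-dimensional part requires a refined B\'ezout-type theorem, not a naive product of $n-1$ degrees. These are precisely the points where the real work lies, so the proposal should be read as an accurate outline of the known strategy with its core steps missing.
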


Now, we apply this result to establish the \L ojasiewicz gradient inequality for maximum of finitely many polynomial functions.

\begin{lemma}\label{Loja_poly}
Let $f(x)=\max_{i=1,\ldots,r} f_i(x)$ where $f_i$ are polynomial functions  on $\R^n$ whose degrees do not exceed $d$, and $\bar{x}\in \R^n$ with $f(\bar{x})=0$. Then, exist $c>0, \, \varepsilon >0$ such that 
	$$dist\left(0,\partial f(x)\right)\geq \tau|f(x)|^{1-\frac{1}{R(n+r-1,d+1)}},\text{ with } \|x-\bar{x}\|\leq \varepsilon.$$
\end{lemma}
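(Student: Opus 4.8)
The plan is to reduce the nonsmooth inequality for $f=\max_i f_i$ to the smooth \L ojasiewicz gradient inequality of D'Acunto--Kurdyka (Theorem~\ref{kur}) by a lifting that replaces the maximum with a single polynomial in a few extra variables. First I would localize. Writing $I(x)=\{i:f_i(x)=f(x)\}$ for the active set, note that for any index $j$ with $f_j(\bar x)<f(\bar x)=0$ one has $f_j(x)<f(x)$ for all $x$ close enough to $\bar x$, by continuity; since there are finitely many indices there is $\varepsilon_0>0$ with $I(x)\subseteq I(\bar x)$ whenever $\|x-\bar x\|\le\varepsilon_0$. Hence on $B_{\varepsilon_0}(\bar x)$ the maximum is governed only by the indices in $I(\bar x)$, and we may assume $f_i(\bar x)=0$ for every $i$. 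I would also recall the standard max-rule: for $f=\max_i f_i$ with $f_i$ smooth, $\partial f(x)$ is the convex hull of $\{\nabla f_i(x):i\in I(x)\}$, so that $\dist(0,\partial f(x))=\min\{\|\sum_{i\in I(x)}\lambda_i\nabla f_i(x)\|:\lambda_i\ge0,\ \sum_i\lambda_i=1\}$.

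Next, for every nonempty $J\subseteq\{1,\dots,r\}$, fix a reference index $k\in J$ and define the lifted polynomial $g_J\colon\R^{n+|J|-1}\to\R$ by
\[
g_J(x,\lambda)=\sum_{i\in J\setminus\{k\}}\lambda_i f_i(x)+\Big(1-\sum_{i\in J\setminus\{k\}}\lambda_i\Big)f_k(x),
\]
where $\lambda=(\lambda_i)_{i\in J\setminus\{k\}}$. Each $g_J$ has degree at most $d+1$ and depends on $n+|J|-1$ variables. Because $f_i(\bar x)=0$ for all $i$, the whole slice $\{\bar x\}\times\R^{|J|-1}$ lies in $g_J^{-1}(0)$. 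Applying Theorem~\ref{kur} (after the harmless translation taking $\bar x$ to the origin) at the points of this slice, and covering the relevant compact piece of the simplex by finitely many balls to obtain uniform constants, I get $c_J,\varepsilon_J>0$ such that
\[
\|\nabla g_J(x,\lambda)\|\ \ge\ c_J\,|g_J(x,\lambda)|^{1-\frac{1}{R(n+|J|-1,\,d+1)}}
\]
for all $x$ with $\|x-\bar x\|\le\varepsilon_J$ and all $\lambda$ in the simplex of $J$.

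The crux is the following choice. Fix $x$ with $\|x-\bar x\|\le\varepsilon:=\min_J\varepsilon_J$ and set $J:=I(x)$. Choose $\lambda^\ast$ in the simplex of $J$ attaining $\dist(0,\partial f(x))=\|\sum_{i\in J}\lambda_i^\ast\nabla f_i(x)\|$. Three facts then hold simultaneously: since every $i\in J$ is active, $g_J(x,\lambda^\ast)=f(x)\sum_{i\in J}\lambda_i^\ast=f(x)$; the $x$-gradient is $\nabla_x g_J(x,\lambda^\ast)=\sum_{i\in J}\lambda_i^\ast\nabla f_i(x)$, whose norm is exactly $\dist(0,\partial f(x))$; and, most importantly, the $\lambda$-gradient has components $\partial g_J/\partial\lambda_i=f_i(x)-f_k(x)$, which all vanish because $i,k\in J=I(x)$ forces $f_i(x)=f_k(x)=f(x)$. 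Consequently $\|\nabla g_J(x,\lambda^\ast)\|=\dist(0,\partial f(x))$, and the displayed inequality for $g_J$ yields $\dist(0,\partial f(x))\ge c_J|f(x)|^{1-1/R(n+|J|-1,d+1)}$. Since $|J|\le r$ and $R(\cdot,d+1)$ is nondecreasing, $1-1/R(n+|J|-1,d+1)\le 1-1/R(n+r-1,d+1)$; shrinking $\varepsilon$ so that $|f(x)|\le1$ on $B_\varepsilon(\bar x)$, this gives $|f(x)|^{1-1/R(n+|J|-1,d+1)}\ge|f(x)|^{1-1/R(n+r-1,d+1)}$, so with $\tau:=\min_J c_J$ the claim follows for every $x\in B_\varepsilon(\bar x)$ (the case $f(x)=0$ being trivial).

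The main obstacle, and the reason a naive lifting fails, is precisely the $\lambda$-derivatives of $g_J$: for an arbitrary subset $J$ the full gradient $\|\nabla g_J\|$ carries the extra terms $f_i(x)-f_k(x)$ that are not controlled by $\dist(0,\partial f(x))$. The key idea that removes this difficulty is to lift using exactly the active subset $J=I(x)$, on which these differences vanish identically; the price is that $|J|$ varies with $x$, which is absorbed by the monotonicity of $R$ and by taking the worst case $|J|=r$. A secondary technical point to treat with care is the passage from the single-base-point statement of Theorem~\ref{kur} to a uniform inequality over the whole zero-slice $\{\bar x\}\times(\text{simplex})$, which is handled by a standard compactness covering.
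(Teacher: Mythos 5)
Your proof is correct and follows essentially the same route as the paper: the same simplex-parametrized lifting $g_J$ (the paper's $F_I$), the same application of the D'Acunto--Kurdyka inequality made uniform by compactness of the simplex, and the same key choice $J=I(x)$ so that the full gradient of the lifted polynomial reduces to $\dist\left(0,\partial f(x)\right)$. You even make explicit three points the paper glosses over --- the reduction to the case where all $f_i(\bar{x})=0$, the vanishing of the $\lambda$-derivatives $f_i(x)-f_k(x)$ on the active set, and the exponent comparison via monotonicity of $R$ together with $|f(x)|\leq 1$ near $\bar{x}$ --- which only strengthens the argument.
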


\begin{proof}
Without loss of generality, suppose that $f_i(\bar{x})=0,\, i=1,\ldots ,r$. For each subset $I=\left\lbrace i_1, \ldots, i_q\right\rbrace \subset \left\lbrace 1, \ldots, r\right\rbrace$, we define the polynomial function $F_I\colon\R^{n+q-1}\rightarrow \R$ as following
	$$ F_I(x,\lambda)=
	\begin{cases}
	\sum\limits_{j=1}^{q-1} \lambda_j f_{i_j}(x)+\left(1- \sum\limits_{j=1}^{q-1} \lambda_j  \right) f_{i_q}(x) &\text{ if } q\geq 2\notag\\
	f_{i_1}(x) &\text{ if } q=1,\notag
	\end{cases}$$
where $\lambda=(\lambda_1,\ldots,\lambda_{q-1})\in \R^{q-1}$. It is clear that $F_I$ has degree at most $d+1$ and $F(\bar{x},\lambda)=0, \forall \lambda \in\R^{q-1}$. Set 
$$P=\left\lbrace \lambda\in \R^{q-1}|\lambda_j\geq 0, \sum\limits_{j=1}^{q-1} \leq 1 \right\rbrace.$$
$P$ is a compact set. For each $\bar{\lambda}\in P $, if $\nabla F_I(\bar{x},\bar{\lambda})=0$, then thanks to Theorem~\ref{kur}, there exit $\varepsilon_I>0,\, \tau_I>0$ such that 
\begin{equation}\label{loja3}
\|\nabla F(x,\lambda)\|\geq \tau_I |F_I(x,\lambda)|^{1-\frac{1}{R(n+q-1,d+1)}}, \text{ with }  \|\lambda-\bar{\lambda}\|\leq \varepsilon_I,\, \|x-\bar{x}\|\leq \varepsilon_I.
\end{equation}
In the other case, when $\nabla F_I(\bar{x},\bar{\lambda)}\ne 0$ then (\ref{loja3})  immediately holds. By the compactness of $P$, the inequality (\ref{loja3}) holds for all $\lambda \in P$. Set 
$$\tau=\min \left\lbrace \tau_I|I\subset \{i,\ldots, r\}, I\ne \emptyset \right\rbrace>0\text{ and } \varepsilon=\min \left\lbrace \varepsilon_I|I\subset \{i,\ldots, r\}, I\ne \emptyset \right\rbrace>0.$$
Take an arbitrary point $x\in \R^n$ such that $ \|x-\bar{x}\|\leq \varepsilon$ and $I(x)=\left\lbrace i|f_i(x)=f(x) \right\rbrace$, then there exist $\lambda_i\geq 0, i\in I(x)$ and $\sum\limits_{i\in I(x)} \lambda_i=1$ such that 
	$$dist\left(0,\partial f(x)\right)=\|\sum\limits_{i\in I(x)} \lambda_i \nabla f_i(x)\|.$$
On the other hand, for $i\in I(x)$, we have
	$$F_{I(x)}(x,\lambda)=\sum\limits_{i\in I(x)} \lambda_i f_i(x)=f(x)$$
and 
	$$\|\nabla F_{I(x)}(x,\lambda)\|=\|\sum\limits_{i\in I(x)} \lambda_i \nabla f_i(x)\|=dist\left(0,\partial f(x)\right).$$
By combining the above inequalities and (\ref{loja3}), we have the conclusion.

\end{proof}

\medskip

\noindent
We now provide the proof of Theorem \ref{LMP1}\\
{\bf Proof of Theorem \ref{LMP1}}
We consider the proof for $\bar{x}\in \bd (S)$. For any $e=(e_i)_{i=1,\ldots,s}\in \left\lbrace -1,1\right\rbrace^s $, define the function 
	$$f_e(x)=\max \left\lbrace 0, f_i(x),\ldots, f_r(x), e_1g_1(x),\ldots,e_s g_s(x)\right\rbrace, \, \forall x\in\R^n.$$
One can see that $f_e$ is the maximum of $r+s+1$ polynomial function with degree not exceed d, and $f_e(\bar{x})=0$. Applying Lemma~\ref{Loja_poly}, one obtains $\tau_e>0$ and $\varepsilon_e>0$ such that
		$$dist\left(0,\partial f_e(x)\right)\geq \tau_e |f_e(x)|^{1-\frac{1}{R(n+r+s,d+1)}}, \forall \|x-\bar{x}\|\leq \varepsilon_e.$$
Set 
$$\tau=\min \left\lbrace \tau_e|e\in \{-1,1\}^s\right\rbrace >0, \quad \varepsilon=\left\lbrace \varepsilon_e|e\in \{-1,1\}^s\right\rbrace>0,$$
and 
$$f(x)=\max \left\lbrace 0, f_i(x),\ldots, f_r(x), g_1(x),\ldots,g_s(x), -g_1(x),\ldots, -g_s(x)\right\rbrace $$
For any $x$ with $\|x-\bar{x}\|\leq \varepsilon$ and $f(x)>0$, then we can find $e\in \{-1,1\}^s$ such that $f(x)=f_e(x)$ and 
$dist\left(0,\partial f(x)\right)=dist\left(0,\partial f_e(x)\right)$.
Therefore, 
$$\dist\left(0,\partial f(x)\right)\geq \tau|f(x)|^{1-\frac{1}{R(n+r+s,d+1)}}, \forall \|x-\bar{x}\|\leq \varepsilon.$$
By applying Corollary~\ref{localnonlinear} with $\varphi(s)=s^{\frac{1}{R(n+r+s,d+1)}},\,\forall s>0$, we obtain the conclusion. $\hfill\Box$

By using the same technique as in\cite[Theorem 2.1]{LuoPang}, \cite[Theorem 2.2]{LuoLuo}, one can obtain  an  error bound whose exponent is different from Theorem \ref{LMP1}.
\begin{theorem}\cite{LiMorPham}\label{LMP2}
With the  assumptions of Theorem \ref{LMP1}, we have the following local error bound.
$$\dist(x,S)\leq \tau\left( \sum\limits_{i=1}^{r}[f_i(x)]_++\sum\limits_{j=1}^{s}|g_i(x)|\right)^\frac{2}{R(n+r,2d)}, \text{ with } \|x-\bar{x}\|\leq \varepsilon.$$
\end{theorem}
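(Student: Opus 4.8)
The plan is to reduce Theorem~\ref{LMP2} to a single-function \L ojasiewicz gradient inequality, exactly mirroring the architecture of the proof of Theorem~\ref{LMP1}, but replacing the degree bookkeeping so that the final exponent becomes $\frac{2}{R(n+r,2d)}$ instead of $\frac{1}{R(n+r+s,d+1)}$. The source of the different exponent is the squaring trick of Luo--Pang and Luo--Luo: rather than building the affine-combination auxiliary functions $F_I$ of degree $d+1$ as in Lemma~\ref{Loja_poly}, one aggregates the constraints into a single nonnegative polynomial by squaring, which raises the degree from $d$ to $2d$ but eliminates the $+1$ and absorbs the equality constraints without the $\{-1,1\}^s$ sign enumeration.

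Concretely, first I would define the aggregate residual function
\begin{equation}\label{c2:aggregate}
h(x)=\sum_{i=1}^{r}[f_i(x)]_+^2+\sum_{j=1}^{s}g_j(x)^2,
\end{equation}
which vanishes precisely on $S$ and is, away from the kinks, a polynomial of degree at most $2d$. The set $S$ is then the zero level set $[h\leq 0]=[h=0]$, and near $\bar x\in\bd S$ one has $h(x)=\dist(x,S)=0$ exactly when $x\in S$. The next step is to establish a \L ojasiewicz gradient inequality of the form $\dist(0,\partial h(x))\geq \tau\,|h(x)|^{1-\frac{1}{R(n+r,2d)}}$ on a ball $B_\varepsilon(\bar x)$; here the dimension count $n+r$ (rather than $n+r+s$) and the degree $2d$ come from treating the $r$ active inequality residuals as lifted coordinates while the squared equalities contribute only to the degree. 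I would prove this by the same compactness-over-multipliers argument as in Lemma~\ref{Loja_poly}: localize over the active index set $I(x)$, form the convex-combination polynomial on the compact simplex $P$, apply Theorem~\ref{kur} (D'Acunto--Kurdyka) at each point, and patch the finitely many local constants $\tau_I,\varepsilon_I$ together by minimizing.

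The final step is to convert the gradient inequality into the error bound via Corollary~\ref{localnonlinear}, choosing $\varphi(s)=s^{\frac{1}{R(n+r,2d)}}$ so that $\varphi'(h(x))\dist(0,\partial h(x))\geq 1$ holds on $B_{2\varepsilon}(\bar x)\cap[0<h<\beta]$; the corollary then yields $\dist(x,[h\leq 0])=\dist(x,S)\leq \varphi(h(x))=h(x)^{\frac{1}{R(n+r,2d)}}$, and since $h(x)\leq C\big(\sum[f_i(x)]_++\sum|g_j(x)|\big)^2$ by a standard comparison of the $\ell^1$ and $\ell^2$ aggregations on a bounded neighborhood, the exponent combines to give $\big(\sum[f_i(x)]_++\sum|g_j(x)|\big)^{\frac{2}{R(n+r,2d)}}$, as claimed. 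The main obstacle I anticipate is getting the degree and dimension accounting exactly right so that the squaring produces $2d$ and $n+r$ cleanly: one must verify that the auxiliary polynomials built from the squared residuals genuinely have degree $2d$ (not $2d+1$ or more after the convex-combination lifting) and that the equality constraints, once squared, can be folded into the nonnegative aggregate without introducing extra lifted variables, so that the dimension stays $n+r$ rather than $n+r+s$. This is precisely the delicate point where the Luo--Pang technique differs from the direct \L ojasiewicz-gradient route, and it is where the improved (or at least different) exponent is earned.
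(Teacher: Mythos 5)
Your squaring intuition points in the right direction, but the proposal has a genuine gap: you never construct the object to which the exponent $\frac{2}{R(n+r,2d)}$ actually refers. Your aggregate $h(x)=\sum_{i=1}^{r}[f_i(x)]_+^2+\sum_{j=1}^{s}g_j(x)^2$ lives on $\R^n$ and is not a polynomial (each $[f_i]_+^2$ is only piecewise polynomial, merely $C^1$), so Theorem~\ref{kur} cannot be applied to it; and the proof you propose for the intermediate gradient inequality cannot work either, because the compactness-over-multipliers argument of Lemma~\ref{Loja_poly} is tailored to max-functions, whose subdifferential is the convex hull of the gradients of the active pieces, parametrized by the simplex $P$. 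Your $h$ is a smooth sum, $\partial h(x)=\{\nabla h(x)\}$: there is no active index set and no multiplier simplex to lift over, so ``the same argument'' has nothing to act on. The phrase ``treating the $r$ active inequality residuals as lifted coordinates'' is exactly the step that is missing: nothing in your construction introduces $r$ extra coordinates, so the appearance of $n+r$ in your claimed inequality $\dist\left(0,\partial h(x)\right)\geq\tau|h(x)|^{1-1/R(n+r,2d)}$ is unjustified --- at best, a piecewise argument on sign regions of the $f_i$ would produce constants tied to $n$ variables and degree $2d$, and would still face the nontrivial problem of patching gradient inequalities across those regions.

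The technique the paper invokes (Luo--Pang, Luo--Luo) makes the lifting explicit through slack variables, and this is what must be added. Define on $\R^{n+r}$ the genuine polynomial $H(x,z)=\sum_{i=1}^{r}\left(f_i(x)+z_i^2\right)^2+\sum_{j=1}^{s}g_j(x)^2$, of degree at most $\max\{2d,4\}$; its zero set is exactly the set of points $(x',z')$ with $x'\in S$ and $z_i'^2=-f_i(x')$. Apply the single-polynomial result (Theorem~\ref{kur} combined with Corollary~\ref{localnonlinear}, taking $\varphi(s)=s^{1/R(n+r,2d)}$) to $H$ around the lifted point $(\bar x,\bar z)$ with $\bar z_i=\sqrt{[-f_i(\bar x)]_+}$, obtaining $\dist\left((x,z),[H=0]\right)\leq\tau H(x,z)^{1/R(n+r,2d)}$ for $(x,z)$ near $(\bar x,\bar z)$. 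Then, for $x$ near $\bar x$, take the continuous section $z_i(x)=\sqrt{[-f_i(x)]_+}$, which stays near $\bar z$; observe that $H\left(x,z(x)\right)=\sum_{i}[f_i(x)]_+^2+\sum_{j}g_j(x)^2\leq\left(\sum_{i}[f_i(x)]_++\sum_{j}|g_j(x)|\right)^2$, and that any zero $(x',z')$ of $H$ has $x'\in S$, whence $\dist(x,S)\leq\dist\left((x,z(x)),[H=0]\right)$. Chaining these three facts gives precisely the exponent $\frac{2}{R(n+r,2d)}$: the slack variables produce $n+r$, and the squaring produces $2d$ together with the factor $2$. Without this lifting, your argument as written cannot reach the stated exponent.
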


%More general, in \cite{LMNT}, the authors give a error bound for parametric polynomial systems with exponent explicitly.
%\begin{theorem}
%Let $f_i, g_j, h_s: H\times\R^m$ with ($i=1,\ldots, I, j=1,\ldots, J,\,s=1,\ldots S)$ be polynomials of degrees at most $d$, and $K$ is a compact set in $H$. Denote
%$$Y(x)=\{y\in \R^m|g_j(x,y)\leq 0, \, h_s(x,y)=0\},$$ 
%and 
%	$$S=\{x\in H| f_i(x,y)\leq 0, \, \forall y\in Y(x), \, i=1,\ldots, I\}$$
%Then there are numbers $c, \varepsilon >0$ such that we have LEB
 %$$\dist(x,S)\leq c \left[\max_{y\in Y(x), i=1,\ldots, I} f_i(x,y)\right]_+^\tau, \, \forall x\in K$$
 %where the exponent $\tau$ is defined by
 %$$\tau=\begin{cases}
 %	R(2n+(m+r+s)(n+1),d+2)^{-1}& \text{ if } L=1\notag\\
 %	R(2n+(m+r+s+2)(n+1),d+L+1)^{-1}&\text{ if } L\geq 2.\notag
 %\end{cases}$$ 
%\end{theorem} 
%\begin{theorem}
%Let $F(x)=\left( f_{ij}(x)\right)_{i,j=1,\ldots p}, x\in H $ be a symmetric polynomial matrix of order $p$ and $f(x)$ is largest eigenvalue of the $F(x)$. We denote $S=\left\lbrace F(x)\prec 0\right\rbrace$, then for any compact set $K\subset H$, there exists a constant $c>0$ such that 
%	$$\dist(x,S)\leq c [f(x)]_+^\frac{1}{R(2n+p(n+1)), d+3}, \forall x\in K.$$
%\end{theorem}
% % % % % % % % % % % % % % % % % % % % % % % % %
\subsection{Global error bounds for polynomial }
%Let us start with the firsr result about GEB, it is the well-known result of Hoffman, in the case of linear function. 
%This result plays a leading role in research on EB. After that, there are a lot of results on EB. 
%In this, the convexity plays an important role in GEB.

% As we know, there are not many results GEB in the case nonconvex. The class of $f$ in two cases is different.
%For this reason, we divide our discussion into two subsection: nonconvex and convex case. 
\subsubsection{Nonconvex case}
We begin this subsection by recalling the result of Luo and Sturm \cite{LuoSturm}. The authors established the global error bound for the zero set of a quadratic function.
\begin{theorem} \cite{LuoSturm} Let $f\colon \R^n \rightarrow \R$ be the quadratic function. There exists a constant $\tau >0$ such that
	$$\dist(x,[f=0])\leq \tau (|f(x)|+|f(x)|^{\frac{1}{2}}), \forall x\in \R^n.$$
\end{theorem}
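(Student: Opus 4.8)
The plan is to deduce the two-sided estimate from a one-sided H\"older error bound and then feed a quadratic gradient estimate into the slope machinery of Section~3. First I would dispose of the trivial case by assuming $[f=0]\neq\emptyset$. For any $x$ with $f(x)>0$ the segment from $x$ to its metric projection $p$ onto the closed set $[f\leq 0]$ meets $\{f=0\}$, and minimality forces $f(p)=0$; hence $\dist(x,[f=0])=\dist(x,[f\leq 0])$, and symmetrically for $-f$ when $f(x)<0$. Thus it suffices to prove, for an arbitrary quadratic $h$ with $[h\leq 0]\neq\emptyset$, the one-sided bound $\dist(x,[h\leq 0])\leq \tau([h(x)]_+ +[h(x)]_+^{1/2})$, and apply it to $h=f$ and $h=-f$. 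Writing $h(x)=\tfrac12\langle Ax,x\rangle+\langle b,x\rangle+c$ with $A=A^{\top}$, I would then diagonalise $A$ orthogonally and translate (isometries, hence harmless for both sides) to reach the normal form $h(z)=\sum_{i=1}^{r}\tfrac{\lambda_i}{2}z_i^2+\langle v,w\rangle+c_0$, where $z=(u,w)$, $r=\mathrm{rank}\,A$, $\lambda_i\neq 0$, and $\langle v,w\rangle$ is the linear part surviving on $\ker A$.

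Throughout I would use that for this $C^1$ function the strong slope equals $\|\nabla h\|$, so the lower bounds I produce on $\|\nabla h\|$ are exactly the hypotheses of Theorem~\ref{Linear} and Corollary~\ref{nonlinear}. The easy case is $v\neq 0$: since $\|\nabla h(z)\|^2=\sum_i\lambda_i^2 z_i^2+\|v\|^2\geq\|v\|^2$, the slope is bounded below by $\|v\|$ on the whole half-line $[0<h<+\infty]$, and Theorem~\ref{Linear} (or Corollary~\ref{Nesscond}) yields the linear bound $\dist(z,[h\leq 0])\leq\|v\|^{-1}[h(z)]_+$, stronger than required. The core case is $v=0$, where $h(z)=g(u):=\sum_i\tfrac{\lambda_i}{2}u_i^2+c_0$ depends only on $u\in\R^{r}$, $\dist(z,[h\leq 0])=\dist(u,[g\leq 0])$, and with $m=\min_i|\lambda_i|>0$ two elementary inequalities drive everything:
\[
\|\nabla g(u)\|^2=\sum_i\lambda_i^2 u_i^2\geq m^2\|u\|^2,\qquad \|\nabla g(u)\|^2\geq m\Big|\sum_i\lambda_i u_i^2\Big|=2m\,|g(u)-c_0|.
\]

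If $c_0=0$ the only critical point $u=0$ already lies in $[g\leq 0]$, so on $[0<g<+\infty]$ the slope satisfies $\|\nabla g(u)\|\geq\sqrt{2m}\,[g(u)]_+^{1/2}$, and Corollary~\ref{nonlinear} with $\varphi(s)\propto s^{1/2}$ gives the pure bound $\dist(u,[g\leq 0])\leq\tau[g(u)]_+^{1/2}$ globally, which ends this subcase. If $c_0\neq 0$ I would extract the two ingredients separately: near the zero set, where $[g(u)]_+\leq\tfrac12|c_0|$, the second inequality gives $|g-c_0|\geq\tfrac12|c_0|$, hence a \emph{uniform} slope bound $\|\nabla g\|\geq s_0:=\sqrt{m|c_0|}>0$, and Theorem~\ref{Linear} on the band $[0<g<\tfrac12|c_0|]$ produces the linear term $\dist(u,[g\leq 0])\leq s_0^{-1}[g(u)]_+$; while for $g\geq 2|c_0|$ the same inequality yields $\|\nabla g\|\geq\sqrt{m}\,[g(u)]_+^{1/2}$, the seed of the square-root term at infinity.

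The genuinely delicate point, and the main obstacle, is the gluing when $c_0\neq 0$: the function $g$ has a critical point at $u=0$ carrying the \emph{positive} critical value $c_0$, so no slope lower bound is available on a level band straddling $c_0$, and neither Corollary can be applied across it with lower level $0$. On the bounded part of the intermediate region $\{\tfrac12|c_0|<g<2|c_0|,\ \|u\|\leq r_0\}$ the ratio $\dist(u,[g\leq 0])/([g]_++[g]_+^{1/2})$ is continuous with denominator bounded away from $0$, so compactness controls it. On the unbounded part one uses $\|\nabla g(u)\|\geq m\|u\|$ together with the fact that two quadrics $\{g=\gamma\}$ and $\{g=\gamma'\}$ stay within bounded Hausdorff distance at infinity (they share the asymptotic cone of $\sum\lambda_i u_i^2$), so chaining $\dist(u,[g\leq 0])\leq\dist(u,[g\leq 2|c_0|])+\sup_{[g=2|c_0|]}\dist(\cdot,[g\leq 0])$ reduces the tail to the large-$g$ estimate above. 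This ``bounded gap between level sets'' is precisely what removes the H\"ormander growth factor $(1+\|x\|)^{b}$ of Theorem~\ref{Horm}; it is a manifestation of the homogeneity of the leading form and could equally be organised as a blow-down argument $x_k/\|x_k\|\to y^{*}$ transferring the behaviour at infinity to the homogeneous quadratic form, for which the estimate is immediate by scaling and compactness on the sphere. I expect this removal of the growth factor to be the only step requiring real work.
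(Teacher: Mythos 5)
First, note that the paper itself contains no proof of this theorem: it is quoted from Luo--Sturm, with the remark that it can be recovered from \cite[Corollary 5]{NgFeng1} or \cite[Corollary 2]{Tiep} (the latter via the sequential characterization of Theorem~\ref{cond}). Your proposal is therefore a genuine reconstruction, and its route --- reduce $\dist(\cdot,[f=0])$ to one-sided bounds for $\pm f$, pass to the normal form $\sum_i\frac{\lambda_i}{2}u_i^2+\langle v,w\rangle+c_0$, and feed the two elementary inequalities $\|\nabla g\|^2\geq m^2\|u\|^2$ and $\|\nabla g\|^2\geq 2m|g-c_0|$ into Theorem~\ref{Linear} and Corollary~\ref{nonlinear} --- is sound and entirely in the spirit of Section~3 of the paper. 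The projection/segment argument giving $\dist(x,[f=0])=\dist(x,[f\leq 0])$ when $f(x)>0$, the case $v\neq 0$, the case $c_0=0$, the uniform slope bound on $[0<g\leq\frac12|c_0|]$, the square-root slope bound for $g\geq 2|c_0|$, and the compactness argument on the bounded part of the intermediate band are all correct as stated.

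The soft spot is exactly where you located it, but the first justification you offer for it would not survive scrutiny. ``The two level sets share the asymptotic cone, hence stay within bounded Hausdorff distance'' is not a valid principle: the sets $\{y\geq x^2\}$ and $\{y\geq 2x^2\}$ share the recession cone $\{0\}\times[0,+\infty)$ yet are at infinite Hausdorff distance, and the paper's own example $f(x,y)=(xy-1)^2+(x-1)^2$ shows how badly level sets of a polynomial can drift apart at infinity despite common asymptotic directions. What makes your claim true is not the cone but the exactness of the second-order Taylor expansion for a quadratic: with $M=\max_i|\lambda_i|$ and $d=-\nabla g(u)/\|\nabla g(u)\|$ one has identically $g(u+td)\leq g(u)-t\|\nabla g(u)\|+\frac{M}{2}t^2$, so whenever $\|\nabla g(u)\|^2\geq 2Mg(u)>0$ the segment reaches $[g\leq 0]$ at some $t\leq 2g(u)/\|\nabla g(u)\|\leq 2g(u)/(m\|u\|)$. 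This one inequality closes both halves of your gluing: on the unbounded part of the band $[\frac12|c_0|\leq g\leq 2|c_0|]$ (where $\|u\|\geq 2\sqrt{M|c_0|}/m$ guarantees the hypothesis) it bounds $\dist(u,[g\leq 0])$ by a constant, and it also shows $\sup_{[g=2|c_0|]}\dist(\cdot,[g\leq 0])<+\infty$, which is what your chaining for the region $g\geq 2|c_0|$ needs (points of $[g=2|c_0|]$ with small $\|u\|$ being handled by compactness). With that substitution --- or with your blow-down argument actually carried out, which amounts to the same computation --- the proof is complete.
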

This result is recovered by the works of \cite[Corollary 5]{NgFeng1}, \cite[Corollary 2]{Tiep}. Remark that this theorem does not until hold for an arbitrary polynomial, 
\begin{ejem}
	Let $f(x,y)=(xy-1)^2+(x-1)^2, \,\forall (x,y)\in \R^2$.
\end{ejem}  
One has $[f\leq 0]=\{(1,1)\}$. Consider the sequence $(x_k=\frac{1}{k},y_k=k)_{k\in \N}$, it is easy to check that
$$0< f(x_k,y_k)=\left(1-\frac{1}{k}\right)^2 < 1, \forall k\in \N\text{ and } d((x_k,y_k), [f\leq 0])\rightarrow +\infty (k\rightarrow +\infty),$$
therefore, $f$ does not possess H\"older global error bound.

However, when $f$ is a polynomial convex, this result was proved by Yang \cite{Yang}, and we present it in Theorem~\ref{Yang08}.
\medskip

Let us now present the characterization of global error bound for semi--algebraic, which is proved by Ha \cite{Vui}.

Suppose that $f\colon \R^n\rightarrow \Rcupinf$ has a H\"older global error bound, 
\begin{equation}\label{geb}
\dist(x,[f\leq 0])\leq  \tau\left( [f(x)]_+^{a}+[f(x)]_+^b\right), \quad \forall x\in\R^n.
\end{equation} 

We observe easily that for any sequence $(x_k)_{k\in \N}\subset \R^n$, two following assertions hold
\begin{description}
	\item[(i)]  If $f(x_k)\rightarrow 0$, then $\dist(x_k,[f\leq 0])\rightarrow 0$.
	\item [(ii)]  If $\dist(x_k,[f\leq 0])\rightarrow +\infty$, then $f(x_k)\rightarrow +\infty$.
\end{description}
Conversely, in \cite{Vui}, Ha proved that, for a polynomial function  which satisfies two above conditions, then it possesses H\"older global error bound. This result was extended for the class of continuous semi-algebraic functions, see \cite[Theorem 2]{Tiep}. The definition of the semi-algebraic function is well-known, we can see the one in \cite[Definition 1]{Tiep}.

%{\color{red}[ What is this a standing assumption?? Please explain] If we take the sequence $(x_k)_{k\in \N}\subset \R^n$ such that $f(x_k)\rightarrow 0^+$, then the $\dist(x_k,[f\leq 0])\rightarrow 0$. And if $\dist(x_k,[f\leq 0])\rightarrow \infty$, then $f(x_k)$ is also required to $+\infty$. }
\begin{theorem}[Characterization of global error bound for semi-algebraic] \cite{Vui, Tiep}\label{cond}
	Let $f\colon\R^n \rightarrow \R$ be a continuous semi-algebraic function. The following statements are equivalent:
	\begin{enumerate}
		\item For any sequence $(x_k)_{k\in \N}\in \R^n\setminus [f\leq 0]$ and $ \|x_k\|\rightarrow +\infty$, we have:
		\begin{description}
			\item[(i)] If $f(x_k)\rightarrow 0$ then $\dist(x_k,[f\leq 0])\rightarrow 0$.\\
			\item [(ii)] If $\dist(x_k,[f\leq 0])\rightarrow +\infty$ then $f(x_k)\rightarrow +\infty$.
		\end{description}
		\item There exist $\tau>0$ and $a, b>0$ such that
		$$\dist(x,[f\leq 0])\leq \tau\left([f(x)]_+^a+[f(x)]_+^b\right),\,\forall x\in\R^n.$$
	\end{enumerate}
\end{theorem}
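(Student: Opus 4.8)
The implication $(2)\Rightarrow(1)$ is the elementary direction, essentially the observation already recorded just before the statement, and it needs no semi-algebraicity. Assuming $\dist(x,[f\leq 0])\leq \tau([f(x)]_+^a+[f(x)]_+^b)$: if $f(x_k)\to 0$ then $[f(x_k)]_+\to 0$, so the right-hand side tends to $0$ and (i) follows; and if $\dist(x_k,[f\leq 0])\to+\infty$ then $\tau([f(x_k)]_+^a+[f(x_k)]_+^b)\to+\infty$, which forces $[f(x_k)]_+\to+\infty$ because $t\mapsto t^a+t^b$ is bounded on bounded sets, i.e. $f(x_k)\to+\infty$, giving (ii).

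The substance is $(1)\Rightarrow(2)$, and my plan is to reduce everything to the one-variable growth behaviour of a single auxiliary function. I would set
$$\Psi(t)=\sup\{\,\dist(x,[f\leq 0]) : f(x)=t\,\},\qquad t>0.$$
Since $f$ is semi-algebraic, $[f\leq 0]$ is a semi-algebraic set, $x\mapsto\dist(x,[f\leq 0])$ is a semi-algebraic function, and by the Tarski--Seidenberg theorem the partial supremum $\Psi$ is again semi-algebraic on $(0,\infty)$. By construction $\dist(x,[f\leq 0])\leq\Psi(f(x))$ whenever $f(x)>0$ (and the left side is $0$ otherwise), so it suffices to establish a bound $\Psi(t)\leq\tau(t^a+t^b)$ for all $t>0$.

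The two hypotheses feed into the two ends of $(0,\infty)$. First I would use (ii) to show $\Psi$ is finite and locally bounded on $(0,\infty)$: if $\Psi$ blew up near some finite $t_0>0$, there would exist $x_k$ with $f(x_k)\to t_0$ and $\dist(x_k,[f\leq 0])\to+\infty$; since $[f\leq 0]\neq\emptyset$ this forces $\|x_k\|\to+\infty$, whence (ii) gives $f(x_k)\to+\infty$, contradicting $f(x_k)\to t_0$. Thus $\Psi\colon(0,\infty)\to[0,\infty)$ is a genuine finite semi-algebraic function. Next I would use (i), together with continuity and compactness, to show $\Psi(t)\to 0$ as $t\to 0^+$: if some sequence $x_k$ had $f(x_k)\to 0$ while $\dist(x_k,[f\leq 0])$ stayed bounded away from $0$, then either $(x_k)$ has a bounded subsequence, whose limit lies in $[f\leq 0]$ by continuity and contradicts the distance lower bound, or $\|x_k\|\to\infty$, in which case (i) forces $\dist(x_k,[f\leq 0])\to 0$, again a contradiction.

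Finally I would invoke the structure theory of one-variable semi-algebraic functions (the Monotonicity Theorem and the resulting Puiseux asymptotics): near $0^+$ and near $+\infty$, $\Psi(t)$ is either identically zero or asymptotic to $c\,t^{\alpha}$ for a rational exponent $\alpha$. The limit $\Psi(t)\to 0$ forces the exponent at $0$ to be some $a>0$, giving $\Psi(t)\leq\tau_0 t^{a}$ for small $t$; finiteness forces merely polynomial growth at infinity, giving $\Psi(t)\leq\tau_1 t^{b}$ for some $b>0$ and large $t$; and on any intermediate compact subinterval $\Psi$ is bounded and hence dominated by $t^a+t^b$ after enlarging the constant. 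Summing the three regimes into $\Psi(t)\leq\tau(t^a+t^b)$ and feeding it back through $\dist(x,[f\leq 0])\leq\Psi(f(x))$ yields the global error bound. The sequence arguments above are routine; I expect the main obstacle to be the semi-algebraic bookkeeping, namely verifying that $\Psi$ is semi-algebraic and finite-valued and then upgrading the \emph{qualitative} limits supplied by (i) and (ii) into the \emph{quantitative} power-type exponents $a,b>0$ via the Puiseux expansions.
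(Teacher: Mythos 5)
Your proposal is correct and follows essentially the same route as the paper's proof: both introduce the auxiliary semi-algebraic function $\Psi(t)=\sup\{\dist(x,[f\leq 0]) : f(x)=t\}$, use (i) and (ii) to control its behaviour near $0$ and to guarantee finiteness, and then apply one-variable Puiseux asymptotics at $0^+$ and at $+\infty$, together with boundedness on the intermediate compact range of values, to assemble the two-exponent bound. If anything, you are slightly more careful than the paper in attributing the finiteness and local boundedness of $\Psi$ to hypothesis (ii) rather than (i).
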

\begin{proof}
	$(2)\Rightarrow (1)$ is obvious, we now prove the implication $(1)\Rightarrow (2)$. The proof is divided into two parts. Using (i), we  shall prove that an error bound holds on the neighborhood of $[f\leq 0]$, while by using (ii) we provide a bound for large $\dist(x,[f\leq 0])$.
	
	Assume (i) holds. Let us prove that there exist $\tau_1>0, a>0$ and $r >0$ such that 
	$$\dist(x,[f\leq 0])\leq \tau_1[f(x)]_+^a, \forall x\in [f\leq r].$$
	For $t\in \R$, put $\varphi (t)=\sup_{} \{ \dist(x,[f\leq 0]): f(x)=t\}$. It is a semi-algebraic function. Thanks to (i), there exists $r>0$ such that $\varphi (t)<\infty$ for all $t\in [0,r]$. We can choose $r$ sufficiently small such that $\varphi (t)$ is continuous and $\varphi(t)\ne 0$ on $(0,r]$. By using Puiseux Lemma:
	$$\varphi (t)=\tau t^a +0(t^a),\, (t\rightarrow 0).$$
	From the assumption (i), it can be seen that $\tau>0$, $a>0$. So there exist $r>0$ and $\tau_1>0$ such that $\varphi(t)\leq \tau_1 t^a$, for all $t\in [f\leq r]$. It means that
	$$\dist(x,[f\leq 0])\leq \tau_1[f(x)]_+^{a}, \forall x\in [f\leq r].$$	
	
	Using (ii), let us prove that there exist $\tau_2>0, b>0$ and $\delta >0$ such that 
	$$\dist(x,[f\leq 0])\leq \tau_2[f(x)]_+^{b}, \forall x\in [\delta <f].$$
	This conclusion is clear when $f$ is bounded from above. We assume thus that $\sup_{\R^n}f=\sup_{\R^n}\varphi=+\infty$. It appears that $\varphi (t)>0$ when $t$ is sufficiently large, so there exist $\tau>0$ and $b>0$ such that
	$$\varphi (t)=\tau t^{b}+0(t^{b}).$$ 
	This implies that there is $c\tau_2>0$, $R>0$ such that
	$$\dist(x,[f\leq 0])\leq \tau_2 [f(x)]_+^{b}, \forall x\in [R<f].$$ 
	It is easily seen that (ii) implies the existence of $M>0$ such that $\dist(x,S)<M$, for all $x\in [r<f<R]$. It gives $\dist(x,[f\leq 0])\leq \frac{M}{r^\alpha}f(x)^\alpha $. Combining with such inequality on the domain $[f\leq r]$ and $[f\geq R]$, we have the conclusion.
\end{proof} 
\medskip

The implication $(2)\Rightarrow (1)$ in the latter theorem explains why do we need two exponents $[f(x)]_+^a$ and $[f(x)]_+^b$  in the global error bound (\ref{geb}). One is ensures that the inequality (\ref{geb}) holds when $\dist(x,[f\leq 0])\rightarrow 0$, and the other keeps such inequality holds when  $\dist(x,[f\leq 0])\rightarrow+\infty$. Generally, the exponents are different. 
\begin{ejem}\cite{SonVui}
	Let $f(x,y)=x^2+y^4,\,\forall (x,y)\in \R^2$.
\end{ejem}
It can be seen that $[f\leq 0]=\left\lbrace\right (0,0)\rbrace$, and
$$\dist((x,y),[f\leq 0])\leq  f^{\frac{1}{4}}(x,y)+f^{\frac{1}{2}}(x,y),\,\forall (x,y)\in \R^2.$$
On the other hands, by taking two sequences  $ (x^1_k=k,y^1_n=0)_{k\in\N}$ and $(x^2_k=0,y^2_k=1/k)_{k\in\N}$, this follows that there does not exist $\alpha \in \R$ such that 
$$\dist((x,y),[f\leq 0])\leq \tau [f(x,y)]_+^{\alpha}, \,\forall(x,y)\in \R^2.$$
By using Theorem \ref{cond}, Ha~\cite{Vui} provided a global error bound for polynomial function under a Palais--Smale condition. After that, his result was improved in \cite{Tiep} for  continuous semi-algebraic functions.

We recall that, $f$ is said to possess the Palais-Smale condition (PS) at $r_0$ if any sequence $(x_k)_{k\in \N}$, for which $f(x_k)\rightarrow r_0$ and $\dist\left(0, \partial f(x_k)\right)\rightarrow 0$, then $(x_k)_{k\in \N}$ possesses a converging subsequence.
\begin{theorem}\cite{Vui,Tiep} 
Let $f\colon \R^n\rightarrow \R$ be a continuous semi-algebraic function. Suppose that $f$ satisfies the Palais-Smale condition at each $r>0$, then there exist constants $\tau>0$ and $a, b>0$ such that
	$$\dist(x,[f\leq 0])\leq \tau\left( [f(x)]_+^a +[f(x)]_+^b \right), \forall x\in \R^n.$$
\end{theorem}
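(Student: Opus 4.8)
The plan is to derive the H\"older global error bound from the Palais--Smale condition by verifying the two asymptotic conditions (i) and (ii) of Theorem~\ref{cond}, and then invoking that theorem directly. Since $f$ is continuous and semi-algebraic, Theorem~\ref{cond} is applicable, so it suffices to show that the Palais--Smale condition at each $r>0$ forces conditions (i) and (ii) to hold for every sequence $(x_k)_{k\in\N}\in\R^n\setminus[f\leq 0]$ with $\|x_k\|\to+\infty$.

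First I would establish condition (i): if $f(x_k)\to 0$ then $\dist(x_k,[f\leq 0])\to 0$. I would argue by contradiction, assuming $\dist(x_k,[f\leq 0])\not\to 0$ along a subsequence. The idea is to run a descent/variational argument: starting from $x_k$, one applies Ekeland's variational principle (or the characterization of error bounds via the strong slope from Theorem~\ref{AzeCorLinear} and Proposition~\ref{slope}) on a suitable sublevel region to produce a point $y_k$ with small slope $\dist(0,\partial f(y_k))$ but with $f(y_k)$ still bounded away from $0$ is not quite what we want, so instead the standard route is to build, via the Palais--Smale machinery applied on the level $f(x_k)$, a minimizing-type sequence whose distance to $[f\leq 0]$ stays bounded below while $f\to 0$; the (PS) condition at the limiting value then yields a convergent subsequence whose limit lies in $[f=0]\subset[f\leq 0]$, contradicting that the distance is bounded below. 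The semi-algebraic structure guarantees the sup-function $\varphi(t)=\sup\{\dist(x,[f\leq 0]):f(x)=t\}$ is well-behaved, so one controls the distance through $\varphi$ rather than through individual sequences.

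Next I would verify condition (ii): if $\dist(x_k,[f\leq 0])\to+\infty$ then $f(x_k)\to+\infty$. Again by contradiction, suppose $f(x_k)$ stays bounded, say $f(x_k)\leq R$ along a subsequence, while the distance blows up. One would then deform each $x_k$ by a steepest-descent flow for $f$ (or discretely, by an Ekeland-point construction) until either reaching $[f\leq 0]$ or hitting a point of small slope at a level in $(0,R]$; the length of this descent path is controlled by the variation of $f$ divided by the slope along the path. If the slope never becomes small, the path reaches $[f\leq 0]$ within bounded length, contradicting the divergence of the distance; if the slope does become small, one extracts a (PS) sequence at some level $r\in(0,R]$, whose convergent subsequence produces a finite cluster point, again bounding the distance. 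Thus boundedness of $f(x_k)$ is incompatible with divergence of the distance.

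The main obstacle will be making the descent argument in step~(ii) rigorous without a smooth gradient flow, since $f$ is only continuous semi-algebraic: one must replace the flow by an iterative Ekeland-point construction and carefully track that the total displacement is controlled by $\int \dfrac{df}{|\nabla f|}$-type estimates, invoking the strong-slope inequalities recalled in the Preliminaries and the lower semicontinuity of the slope. Once both (i) and (ii) are secured, the conclusion is immediate: Theorem~\ref{cond} yields constants $\tau>0$ and $a,b>0$ with $\dist(x,[f\leq 0])\leq\tau\bigl([f(x)]_+^a+[f(x)]_+^b\bigr)$ for all $x\in\R^n$, which is exactly the claimed bound.
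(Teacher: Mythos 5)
Your overall strategy coincides with the paper's: reduce the theorem to the characterization in Theorem~\ref{cond} and verify, by contradiction, that the Palais--Smale condition forces conditions (i) and (ii). Your argument for condition (i) is also, in substance, the paper's: negating (i), one applies Ekeland's variational principle on the complete metric space $X=\left\lbrace x\,|\,f(x)\geq 0\right\rbrace$ with parameter $\varepsilon_k=f(x_k)$ and radius $\sqrt{\varepsilon_k}$, obtaining points $y_k$ with $f(y_k)\to 0$, $\dist(y_k,[f\leq 0])\geq \delta/2$, $\|y_k\|\to\infty$ and $|\nabla f|(y_k)\leq\sqrt{\varepsilon_k}\to 0$; this is an unbounded (PS)-type sequence, which is the desired contradiction. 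Your narration wavers before settling on this, but the idea you settle on is the right one.

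The genuine gap is in your step (ii). You propose to deform each $x_k$ by a steepest-descent flow (or an iterated Ekeland construction) and to control the length of the descent path by estimates of $\int df/|\nabla f|$ type, and you yourself flag that making this rigorous for a merely continuous semi-algebraic $f$ is ``the main obstacle''; indeed it is, and you never resolve it, so what you have for (ii) is a program rather than a proof. The missing idea is that no flow and no path-length estimate are needed: a \emph{single} application of Ekeland's principle per $k$, with the perturbation radius scaled by the distance to the level set, does the job. Concretely, negate (ii): $\|x_k\|\to\infty$, $\dist(x_k,[f\leq 0])\to+\infty$, and, after extracting a subsequence, $f(x_k)\to t$ finite. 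Apply Ekeland on $X=\left\lbrace x\,|\,f(x)\geq 0\right\rbrace$ with $\varepsilon_k=f(x_k)$ and radius $\lambda_k=\dist(x_k,[f\leq 0])/2$. This produces $y_k\in X$ with $f(y_k)\leq f(x_k)$, $\|y_k-x_k\|\leq\lambda_k$, and
$$f(y_k)\leq f(x)+\frac{2f(x_k)}{\dist(x_k,[f\leq 0])}\,\|x-y_k\|,\quad\forall x\in X.$$
Consequently $\dist(y_k,[f\leq 0])\geq\dist(x_k,[f\leq 0])/2\to+\infty$, so $\|y_k\|\to\infty$ and $y_k$ lies far inside $X$ (hence the Ekeland inequality really does bound the slope of $f$ at $y_k$ in $\R^n$, not merely relative to $X$), while $|\nabla f|(y_k)\leq 2f(x_k)/\dist(x_k,[f\leq 0])\to 0$ and $f(y_k)$ stays bounded. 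Passing to a subsequence along which $f(y_k)$ converges, you obtain a (PS) sequence with no convergent subsequence --- the contradiction, in three lines, with no descent path to control. (One wrinkle, present in the paper's own proof as well: when the limiting level of $f(y_k)$ is $0$ --- which always happens in step (i) and can happen in step (ii) --- one needs the Palais--Smale condition at $r=0$, or an argument reducing that case to condition (i); the hypothesis as stated only covers $r>0$.)
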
 
\begin{proof}
It is enough to show that $f$ satisfies the two conditions (i) and (ii) in Theorem \ref{cond}. First we establish (i). By contradiction, we assume that there exists a sequence $(x_k)_{k\in \N}$ and a constant $\delta>0$ such that: 
	$$\|x_k\|\rightarrow \infty, f(x_k)\rightarrow 0 \text{ and } \dist(x_k,[f\leq 0])>\delta.$$
Put $X=\left\lbrace x|f(x)\geq 0 \right\rbrace$, then $X$ is a complete metric space. Applying Ekeland's  principle (see \cite{Eke}), there is a sequence $(y_k)_{k\in \N}\subset X$ such that
	$$f(y_k)\leq f(x_k)=\varepsilon_k$$
	$$\dist(x_k,y_k)\leq \sqrt{\varepsilon_k}$$
	$$f(y_k)\leq f(x)+\sqrt{\varepsilon _k}\dist(x,y_k), \forall x\in X.$$
It is clear that $f(y_k)\rightarrow 0$ and $\|y_k\|\rightarrow +\infty$. We can suppose that $\dist(y_k,[f\leq 0])\geq \frac{\delta}{2}$, therefore $\forall t\in(0,\frac{\delta}{2})$ and for all $ u\in \R^n, \|u\|=1$ we obtain
	$$\frac{f(y_k+tu)-f(y_k)}{t}\geq -\sqrt{\varepsilon_k}.$$ 
Thus $|\nabla f|(y_k)\leq \sqrt{\varepsilon_k} $. On the other hands,  $\|\partial f(y_k)\|\leq |\nabla f|(y_k)$, (see \cite[Remark 6.1]{AzeCor14}), therefore $\partial f(y_k)\rightarrow 0$, which is in contradiction with Palais-Smale's condition.

Now, we will prove that $f$ satisfies the condition (ii) of Theorem \ref{cond}. By contradiction, suppose that there exists a sequence  $(x_k)_{k\in\N}\subset \R^n$ such that: 
	$$\|x_k\|\rightarrow \infty,\: \dist(x_k, [f\leq 0])\rightarrow +\infty\text{ and } f(x_k)\rightarrow t\in \R.$$
Set $X=\left\lbrace x|f(x)\geq 0 \right\rbrace$,  $X$ is a complete metric space. Applying Ekeland's  principle, there is a sequence $(y_k)_{k\in\N}\subset X$ such that
	$$f(y_k)\leq f(x_k)=t_k$$
	$$\dist(x_k,y_k)\leq \frac{\dist(x_k,[f\leq 0])}{2}$$
	$$f(y_k)\leq f(x)+\frac{2f(x_k)}{\dist(x_k,[f\leq 0])} \dist(x,y_k), \forall x\in X$$
Therefore, without loss of generality we can assume that the sequence $f(y_k)$ is convergent, $\|y_k\|\rightarrow \infty$ and $\dist(y_k,[f\leq 0])\rightarrow +\infty$, therefore,
$$\|\nabla f(y_k)\|\leq |\nabla f|(y_k)\leq\frac{2f(x_k)}{\dist(x_k,[f\leq 0])} \rightarrow 0,$$
 contradicting to Palais-Smale's condition. 
\end{proof}

\medskip
 
\subsubsection{Convex case}  
We begin this subsection by giving a result of Facchinei, Pang \cite{FaccPang}, they assert that a lower semicontinuous convex function, a H\"older-type error bound on a level set can be extended to a global error bound.
\begin{theorem}\cite{FaccPang}\label{FacPang} 
Let $f$ be a lower semicontinuous convex function on $\R^n$ with $[f\leq 0]$ nonempty. Suppose that there exist $\delta>0$ and $\tau>0,\,\theta >0$ such that
 $$\dist(x,[f\leq 0])\leq  \tau\left([f(x)]_++[f(x)]^\theta_+\right), \quad \forall x\in[f\leq \delta].$$
There exists $\tau'>0$ such that 
$$\dist(x,[f\leq 0])\leq \tau'\left([f(x)]_+ +[f(x)]^\theta_+\right), \quad \forall x\in \R^n.$$ 
When we take $\theta=1$, this means that for a convex function, the Lipschitz error bound on the level set can be extended to a global error bound.
\end{theorem}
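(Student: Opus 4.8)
The plan is to leave the hypothesis untouched on the sublevel set $[f\le \delta]$ and to manufacture a bound that grows only linearly in $[f(x)]_+$ on the complementary region $[f>\delta]$, then to merge the two estimates under a single constant. Throughout I may assume $f(x)<+\infty$, since otherwise the right-hand side is $+\infty$ and nothing is to be proved, and I fix a point $x$ with $f(x)>\delta$. Because $f$ is lower semicontinuous and convex and $[f\le 0]$ is nonempty, the set $[f\le 0]$ is closed, convex and nonempty in $\R^n$, so $x$ has a metric projection $p\in[f\le 0]$ with $\|x-p\|=\dist(x,[f\le 0])$.

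First I would travel along the segment $[x,p]$ until I hit the level $\delta$. Writing $g(t)=f\big(x+t(p-x)\big)$ for $t\in[0,1]$, convexity and finiteness of the endpoint values make $g$ finite and convex, hence continuous on $[0,1]$: continuity on $(0,1)$ is automatic for a finite convex function, and the lower semicontinuity of $f$ forbids upward jumps at the endpoints. Since $g(0)=f(x)>\delta$ and $g(1)=f(p)\le 0<\delta$, the intermediate value theorem produces $t^*\in(0,1)$ with $g(t^*)=\delta$; I set $z=x+t^*(p-x)$, so that $f(z)=\delta$ and $\dist(x,z)=t^*\|x-p\|=t^*\dist(x,[f\le 0])$.

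The decisive quantitative step would be a convexity estimate controlling the crossing parameter $t^*$. From $g(t^*)\le (1-t^*)g(0)+t^*g(1)\le(1-t^*)f(x)$ together with $g(t^*)=\delta$ I obtain $1-t^*\ge \delta/f(x)$. Next, applying the hypothesis at $z\in[f\le\delta]$ gives $\dist(z,[f\le 0])\le\tau(\delta+\delta^\theta)$, a fixed constant. Feeding $\dist(x,z)=t^*\dist(x,[f\le 0])$ into the triangle inequality $\dist(x,[f\le 0])\le\dist(x,z)+\dist(z,[f\le 0])$ yields $(1-t^*)\dist(x,[f\le 0])\le\tau(\delta+\delta^\theta)$, whence
$$\dist(x,[f\le 0])\le\frac{\tau(\delta+\delta^\theta)}{1-t^*}\le\frac{\tau(\delta+\delta^\theta)}{\delta}\,f(x).$$
Since $f(x)>\delta>0$ we have $[f(x)]_+=f(x)\le[f(x)]_++[f(x)]_+^\theta$, so with $C:=\tau(\delta+\delta^\theta)/\delta$ the region $[f>\delta]$ satisfies $\dist(x,[f\le 0])\le C\big([f(x)]_++[f(x)]_+^\theta\big)$. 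Taking $\tau'=\max\{\tau,C\}$ and recalling the hypothesis on $[f\le\delta]$ would then deliver the global bound; specializing to $\theta=1$ gives the Lipschitz statement.

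I expect the main obstacle to be precisely the quantitative control of the crossing parameter: everything hinges on the convexity inequality $1-t^*\ge\delta/f(x)$, which is what upgrades a merely local (bounded) error bound into a genuine global linear-growth estimate. The continuity of $g$ on the \emph{closed} segment, needed to invoke the intermediate value theorem, is a secondary but essential point where lower semicontinuity must be combined with convexity.
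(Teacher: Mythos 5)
Your proof is correct, and its skeleton is the same as the paper's: for $f(x)>\delta$ you project $x$ onto the closed convex set $[f\le 0]$, slide along the segment towards the projection $p$ until you enter the sublevel set $[f\le\delta]$, invoke the hypothesis there, and use convexity to pull the estimate back to $x$. The executions differ in a way worth recording. The paper takes $x_\lambda=\lambda x+(1-\lambda)p$ with the explicit choice $\lambda=\delta/(2f(x))$, uses the projection identity $p=P_{[f\le 0]}x_\lambda$ (so that $\dist(x_\lambda,[f\le 0])=\lambda\dist(x,[f\le 0])$), and then bounds the ratio $\dist(x_\lambda,[f\le 0])/f(x_\lambda)\le\tau\bigl(1+f(x_\lambda)^{\theta-1}\bigr)$ by $\tau\bigl(1+(\delta/2)^{\theta-1}\bigr)$; that last majorization only uses $f(x_\lambda)\le\delta/2$ and is valid only when $\theta\ge 1$, since for $\theta<1$ the map $t\mapsto t^{\theta-1}$ is decreasing and $f(x_\lambda)$ may be arbitrarily close to $0$. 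Your variant sidesteps exactly this weak point: by locating $z$ on the segment with $f(z)=\delta$ exactly (your continuity argument for $g$, combining convexity on the open interval with lower semicontinuity at the endpoints, is the right justification for the intermediate value theorem), the hypothesis yields the \emph{constant} bound $\dist(z,[f\le 0])\le\tau(\delta+\delta^\theta)$, and the triangle inequality together with the convexity estimate $1-t^*\ge\delta/f(x)$ closes the argument for every $\theta>0$; you also need only the triangle inequality where the paper needs invariance of the projection along the segment, which is slightly more elementary. So your proof is a more robust rendering of the same idea, and in fact the paper's argument could be repaired along your lines by bounding the numerator $\dist(x_\lambda,[f\le 0])\le\tau\bigl(\delta/2+(\delta/2)^\theta\bigr)$ directly and dividing by $\lambda$, instead of forming the ratio $\dist(x_\lambda,[f\le 0])/f(x_\lambda)$.
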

\begin{proof}
	Let $x\in \R^n$ such that $f(x)>\delta$ and $p=P_{[f\leq 0]}x$. It is clear that $f(p)=0$. For any $\lambda\in (0,1)$, we denote $x_\lambda=\lambda x+(1-\lambda)p$. It can be seen that $p=P_{[f\leq 0]}x_\lambda$ and $\dist(x_\lambda,[f\leq 0])=\lambda \dist(x,[f\leq 0])$. By convexity, we get 
	$$f(x_\lambda)\leq \lambda f(x)+(1-\lambda)f(p)=\lambda f(x).$$
We deduce that
   $$\dist(x,[f\leq 0])\leq \frac{\dist(x_\lambda,[f\leq 0])}{f(x_\lambda)} f(x).$$
On the other hand, by choosing $\lambda=\frac{\delta}{2f(x)}$, we get 
 $$f(x_\lambda)\leq \lambda f(x)=\frac{\delta}{2}<\delta.$$
 Therefore, thanks to the assumption on error bounds, we obtain 
 $$\dist(x_\lambda,[f\leq 0])\leq \tau\left(f(x_\lambda)+f^\theta(x_\lambda)\right).$$
 It follows that
 $$\frac{\dist(x_\lambda,[f\leq 0])}{f(x_\lambda)}<\tau\left(1+f^{\theta-1}(x_\lambda)\right)<c\left(1+\left(\frac{\delta}{2}\right)^{\theta-1}\right) .$$
 Combining the above inequalities, we get
 $$\dist(x,[f\leq 0])\leq \tau\left(1+\left(\frac{\delta}{2}\right)^{\theta-1}\right)f(x).$$
 This means that
 $$\dist(x,[f\leq 0])\leq \tau\left(1+\left(\frac{\delta}{2}\right)^{\theta-1}\right) \left(f(x)+f^\theta(x)\right), \forall x\in \R^n.$$
\end{proof}

\medskip

\noindent
Combining this result with Theorem~\ref{loja}, we immediately obtain a result similar to \cite[Theorem 3]{eb&kl} and \cite[Theorem 6]{deng1998perturbation}.
\begin{theorem}
	Let $f_i\colon\R^n \rightarrow \R,\, (i=1,\ldots m)$ be continuous, convex and subanalytic functions. Assume that, the set 
	$$S=\left\lbrace x\in\R^n|f_i(x)\leq 0,\, i=1,\ldots m\right\rbrace$$
is nonempty, compact. Then, there exist $\tau,\,\theta>0$ such that
	$$\dist(x,S)\leq \tau\left([f(x)]_++[f(x)]_+^\theta\right),\, \forall x\in \R^n,$$
where $f(x)=\sum_{i=1}^{m}[f_i(x)]_+$.
\end{theorem}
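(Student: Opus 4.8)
The plan is to combine the \L ojasiewicz function inequality (Theorem~\ref{loja}) to get a local H\"older error bound on a suitable sublevel set, and then invoke the extension Theorem~\ref{FacPang} to promote it to a global error bound. First I would set $f(x)=\sum_{i=1}^{m}[f_i(x)]_+$. Since each $f_i$ is continuous, convex and subanalytic, and the functions $[\cdot]_+=\max\{\cdot,0\}$ and finite sums preserve both continuity and subanalyticity (the graph of a max or sum of subanalytic functions is again subanalytic by the closure properties recalled after the definition), the function $f$ is itself continuous, convex and subanalytic, with $[f\leq 0]=S$. Note also $f\geq 0$ everywhere, so $[f(x)]_+=f(x)$.

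Next I would produce the local H\"older bound on a compact neighbourhood of $S$. Fix any $\delta>0$ and consider the set $K=[f\leq\delta]$. The key technical point is that $K$ is compact: indeed $S=[f\leq 0]$ is assumed nonempty and compact, and for a convex function whose zero sublevel set is bounded, all sublevel sets $[f\leq\delta]$ are bounded (a convex function with a bounded nonempty sublevel set is coercive), hence $K$ is bounded; it is closed by lower semicontinuity, so it is compact. Applying Theorem~\ref{loja} to the continuous subanalytic function $f$ on the compact set $K$, I obtain $\tau_1>0$ and $a>0$ such that
$$\dist(x,[f\leq 0])\leq \tau_1[f(x)]_+^{a},\qquad \forall x\in K=[f\leq\delta].$$

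Finally I would feed this into the convex extension theorem. Writing $\theta=a$, the displayed inequality gives, for all $x\in[f\leq\delta]$,
$$\dist(x,[f\leq 0])\leq \tau_1[f(x)]_+^{\theta}\leq \tau_1\bigl([f(x)]_++[f(x)]_+^{\theta}\bigr),$$
which is precisely the hypothesis of Theorem~\ref{FacPang} (with the constant $\tau_1$ in place of $\tau$). Since $f$ is lower semicontinuous and convex with $[f\leq 0]=S$ nonempty, Theorem~\ref{FacPang} yields a constant $\tau'>0$ with
$$\dist(x,S)\leq \tau'\bigl([f(x)]_++[f(x)]_+^{\theta}\bigr),\qquad \forall x\in\R^n,$$
which is the claimed global error bound after renaming $\tau'$ as $\tau$.

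The main obstacle I anticipate is not the extension step, which is a direct citation, but verifying that $f$ lands in the right function class and that $K$ is compact. The subanalyticity of $f=\sum_i[f_i]_+$ must be checked carefully from the stated closure properties (finite sums and the $\max$ with the constant $0$ keep us inside the subanalytic class), and the compactness of $[f\leq\delta]$ relies on the compactness of $S$ together with convexity to rule out unbounded sublevel sets; without the compactness of $S$ the local bound of Theorem~\ref{loja}, which is stated only on compact sets, could not be applied. Once these two points are secured, the proof is simply the concatenation Theorem~\ref{loja} $\Rightarrow$ hypothesis of Theorem~\ref{FacPang} $\Rightarrow$ global bound.
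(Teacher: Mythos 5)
Your proposal is correct and follows exactly the route the paper takes: the paper proves this theorem by ``combining'' Theorem~\ref{FacPang} (the Facchinei--Pang extension of a level-set error bound to a global one for convex functions) with Theorem~\ref{loja} (the \L ojasiewicz inequality giving the H\"older bound on a compact sublevel set), which is precisely your concatenation. Your additional verifications --- that $f=\sum_i[f_i]_+$ stays continuous, convex and subanalytic, and that compactness of $S$ plus convexity forces $[f\leq\delta]$ to be compact --- are the details the paper leaves implicit, and they are argued correctly.
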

We remark that if $f_i$ is coercive then for all $r\in \R$, the set $[f_i\leq r]$ is compact.
%We are considering the example, where the global error bound may be fail for the convex function.
%\begin{ejem}\cite{LewPang}\label{ex1}
%	$f(x,y)=x+\sqrt{x^2+y^2}$
%\end{ejem}
%It is easy to check that $S=\{(x,0)|x\leq 0\}$ has nonempty interior. We take the sequence $(z_k=(-k,1))_{k\in \N}$ then $f(z_k)$ converges to $0$ but $\dist(z_k,S)=1,\forall k\in\N$, so that there is not global error bound for $S$. 

%So, what is the condition ensuring that the global error bound holds for convex function? We find out this example two properties\colon One is that $S$ has nonempty interior and the other is that $\inf_{(x,y)\in \R^2}\|f'(x,y)\| =0$. Now let us introduce some results on the necessary and sufficient condition to global error bound for convex function.  
%\begin{ejem}\cite{LewPang}\label{ex2}
%	$$f(x,y)=\begin{cases}
%		\frac{x^2}{y}& \text{ if } y> 0\notag\\
%		0&\text{ if } x=y=0\notag\\
%		\infty& \text{ otherwise}\notag
%	\end{cases}$$
%\end{ejem}
%\begin{theorem} \cite{AzeCor}
%Let $f\colon \mathbb{R}^n \longrightarrow \mathbb{R}$ be a lower semicontinuous proper convex function. Then the following statements are equivalent\colon
%\begin{enumerate}
%	\item $f$ admits a H\{"}oder-tyle global error bound: Exists $a>0, \alpha\in (0,1]$ such that 
%	$$\dist(x,S)\leq c[f(x)]^\alpha_+, \forall x\in \mathbb{R}^n.$$	
%	\item  There exist $b>0$ such that $\inf \{\partial f(x)| f(x)>0\}\geq [f(x)]_+^{1-\alpha}$. 
%\end{enumerate}
%\end{theorem}
We recall now the definition of piecewise convex polynomial functions.
\begin{definition}\cite{WLi95,Li}
A continuous function $f$ on $\R^n$ is called to be a piecewise convex polynomial function if there exist finitely many polyhedra $P_1,\ldots, P_k$ with $\cup_{j=1}^kP_j=\R^n$ such that the restriction of $f$ on each $P_j$, denoted by $f_i$, is a convex polynomial function. The degree of $f$, denoted by $\deg (f)$, is defined as the maximum of $\deg (f_j)$.
\end{definition}
In \cite{WLi95}, Li studied error bounds for a convex piecewise quadratic function. More precisely, let $f$ be a convex piecewise quadratic function. Then, there exists $\tau>0$ such that
\begin{equation}\label{Li95}
\dist(x,[f\leq 0])\leq \tau\left([f(x)]_++\sqrt{[f(x)]_+} \right), \forall x\in \R^n.
\end{equation}
By using Theorem \ref{Kolla} and Theorem \ref{FacPang}, Li \cite{Li10} showed that, for a convex polynomial function $f$ on $\R^n$ with degree $d$, there exists $\tau>0$ such that
 	\begin{equation}\label{Li10}
 	\dist(x,[f\leq 0])\leq \tau\left( [f(x)]_++[f(x)]_+^\frac{1}{\kappa(n,d)}\right), \forall x\in \R^n.
 	\end{equation}
This result is further improved by Yang \cite{Yang}. 
\begin{theorem}\cite{Yang}\label{Yang08}
Let $f$ be a polynomial convex with degree $d$. There exists $\tau >0$ such that 
$$\dist(x,[f\leq 0])\leq \tau\left( [f(x)]_++[f(x)]_+^\frac{1}{d}\right), \forall x\in \R^n.$$ 
\end{theorem}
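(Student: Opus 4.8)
The plan is to split the argument into a near--boundary estimate carrying the sharp H\"older exponent $1/d$ and a convex globalization step, and throughout I assume $[f\leq 0]\neq\emptyset$ (so $\inf f\leq 0$). First I would invoke the convex extension principle of Facchinei and Pang, Theorem~\ref{FacPang}, with $\theta=1/d$: it suffices to produce a constant $\tau>0$ and a level $\delta>0$ for which
\[
\dist(x,[f\leq 0])\leq \tau\,[f(x)]_+^{1/d},\qquad \forall x\in[f\leq\delta],
\]
because on $[f\leq\delta]$ one has $[f(x)]_+^{1/d}\leq [f(x)]_++[f(x)]_+^{1/d}$, so the hypothesis of Theorem~\ref{FacPang} holds and it upgrades the estimate to the two--term global bound in the statement. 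This reduction is the same mechanism by which Li derived (\ref{Li10}); the whole improvement is to replace his exponent $1/\kappa(n,d)$, inherited from Koll\'ar's Theorem~\ref{Kolla}, by the dimension--free $1/d$, and the two terms of the final bound reflect precisely the small--$f$ regime (the $[f]_+^{1/d}$ term) and the large--$f$ regime (the $[f]_+$ term).

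To obtain the exponent $1/d$ I would argue one dimension at a time along the projection. Fix $x$ with $0<f(x)\leq\delta$, let $p=P_{[f\leq 0]}(x)$ and $r=\|x-p\|=\dist(x,[f\leq 0])$; since the half--open segment $(p,x]$ leaves the convex set $[f\leq 0]$, the restriction $h(s)=f\bigl(p+s\,\tfrac{x-p}{r}\bigr)$ is a univariate convex polynomial of degree at most $d$ with $h(0)=0$, $h>0$ on $(0,r]$ and $h(r)=f(x)$. The key point is that $h$ vanishes at $s=0$ to some order $m$ with $1\leq m\leq d$: either $p\notin\argmin f$, in which case $h'(0)>0$ and $m=1$, or $p\in\argmin f$ (only possible when $\inf f=0$), in which case $h'(0)=0$ forces $m$ even with $2\leq m\leq d$. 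In both cases convexity together with the bound on $r$ coming from $f(x)\leq\delta$ yields $f(x)=h(r)\geq c\,r^{m}\geq c\,r^{d}$ over the relevant range of $r$, i.e. $r\leq (f(x)/c)^{1/d}$, which is exactly the desired local estimate.

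The main obstacle is the uniformity of the constant $c$ as the base point $p$ ranges over $[f=0]$ and the direction $(x-p)/r$ varies, the difficulty being that $[f=0]$ (equivalently $\argmin f$ when $\inf f=0$) need not be bounded. What must be shown is that the transverse growth coefficient of $h$ --- playing the role of $\|\nabla f(p)\|$ when $\inf f<0$ and of the lowest nonzero transverse Taylor coefficient of $f$ along $\argmin f$ when $\inf f=0$ --- is bounded below by a positive constant depending only on $f$. This is where convexity is decisive and where the dimension drops out: the admissible pairs (base point, unit normal direction) form a semialgebraic family, so the infimum of the growth coefficient is a semialgebraic function of a single real parameter, and by the Puiseux/curve--selection argument already used in the proof of Theorem~\ref{cond} it cannot tend to $0$ without violating either convexity or the degree bound $d$. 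An equivalent route is induction on $n$ via the recession function of $f$, which is again a convex polynomial of degree $\leq d$ and controls $h$ along the unbounded directions of $\argmin f$. Establishing this uniform transverse growth is the crux; once it is in hand the univariate computation above and Theorem~\ref{FacPang} complete the proof.
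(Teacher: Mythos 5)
The paper itself offers no proof of Theorem~\ref{Yang08}: it is stated as a survey item with a citation to Yang, so your attempt can only be judged on its own terms. Its outer scaffolding is sound: the reduction via Theorem~\ref{FacPang} is exactly the mechanism the paper attributes to Li's weaker bound (\ref{Li10}) (Koll\'ar's Theorem~\ref{Kolla} near the zero set, then the Facchinei--Pang globalization), and your univariate setup is correct --- the projection $p$ satisfies $f(p)=0$, the segment $(p,x]$ lies outside $[f\leq 0]$, and the parity dichotomy for the order of vanishing of $h$ (either $h'(0)=\|\nabla f(p)\|>0$, or $p\in\argmin f$ and the order is even) follows from convexity of $h$ on all of $\R$.

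The problem is that everything you defer to the last paragraph is the theorem, and the arguments you gesture at cannot close it. First, a smaller but genuine circularity: you use ``the bound on $r$ coming from $f(x)\leq\delta$'' to pass from $r^m$ to $r^d$, but the statement that $\dist(\cdot,[f\leq 0])$ is bounded on a low sublevel set is itself an error-bound-type assertion that is not available a priori --- Example~\ref{Slater:Ex} shows it can fail even for a maximum of two convex polynomials satisfying Slater's condition, so for a single convex polynomial it must be \emph{proved}, and it is essentially condition (i)/(ii) of Theorem~\ref{cond} for $f$. Second, and decisively: the uniform lower bound on the transverse growth coefficient cannot be obtained from the univariate restrictions at all, because no lemma of the form ``$h$ univariate convex of degree $\leq d$, $h(0)=0$, $h>0$ on $(0,r]$ implies $r\leq C_d\bigl(h(r)+h(r)^{1/d}\bigr)$ with $C_d$ depending only on $d$'' is true: take $h(s)=\varepsilon s$ and let $\varepsilon\to 0$ with $r$ fixed. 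So the constant necessarily depends on the coefficients of $h_{p,u}$, i.e.\ on the pair (base point, direction), and the needed uniformity must be extracted from the global structure of the multivariate convex polynomial $f$ --- precisely the information that restricting to lines throws away. Your proposed fix does not supply it: when $[f=0]$ is unbounded the family of pairs $(p,u)$ is non-compact, and the infimum of a positive semialgebraic quantity over a non-compact semialgebraic family can perfectly well equal $0$; Puiseux expansion and curve selection apply to one-parameter germs (at a point or at infinity), and you never identify the single parameter, nor why degeneration along such a curve would contradict ``convexity or the degree bound'' --- that implication is exactly Yang's theorem restated, not an argument for it. As it stands, the proof establishes the reduction and the pointwise (non-uniform) estimate, but the heart of the result is missing.
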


%Li showed that, for a convex polynomial function $f$ on  $\R^n$ with degree $d$, there exists $c>0$ such that

The two above results (\ref{Li95}), (\ref{Li10}) have been extended by Li (\cite{Li}), for general convex piecewise polynomial function.
\begin{theorem}\cite{Li}
 	Let f be a piecewise convex polynomial function on $\R^n$ with degree $d$. Suppose that one of the following two conditions holds:
 	\begin{itemize}
 		\item[(i)]If  $\dist(x,[f\leq 0])\rightarrow +\infty$  then $f(x)\rightarrow +\infty $.
 		\item[(ii)] $f$ is convex.
 	\end{itemize}
 	There exists $c>0$ such that
 	$$\dist(x,[f\leq 0])\leq c\left( [f(x)]_++[f(x)]_+^\frac{1}{\kappa(n,d)}\right), \forall x\in \R^n.$$	
 \end{theorem}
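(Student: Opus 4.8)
The plan is to split $\R^n$ into a zone where $f$ is small and a zone where $f$ is large, letting the sublinear term $[f(x)]_+^{1/\kappa(n,d)}$ govern the estimate near the solution set $[f\le 0]$ and the linear term $[f(x)]_+$ govern it far away, in exactly the spirit of Theorem~\ref{cond}. Since $f$ is continuous and restricts to a polynomial on each member of a finite polyhedral subdivision, it is semialgebraic, so both Theorem~\ref{loja} and Theorem~\ref{cond} are at our disposal. Concretely, I would first prove a \emph{near-solution} bound: there exist $\delta>0$ and $c_1>0$ with
\[
\dist(x,[f\le 0])\le c_1\,[f(x)]_+^{1/\kappa(n,d)},\qquad \forall x\in[f\le\delta],
\]
and then globalize it, handling the two hypotheses separately.

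For the near-solution bound I would argue piece by piece. On each polyhedron $P_j$ the restriction $f_j$ is a convex polynomial of degree at most $d$, so the convex-polynomial error bound (\ref{Li10}) (and its refinement, Yang's Theorem~\ref{Yang08}) supplies the exponent $1/\kappa(n,d)$ for the distance to the local zero set $\{x\in P_j:f_j(x)\le 0\}$. The delicate point is to pass from these finitely many piecewise estimates, together with the local estimate on compact sets furnished by Theorem~\ref{loja}, to a single estimate for $\dist(x,[f\le 0])$ that is uniform across the pieces and across the lower-dimensional strata along which the polyhedra meet; here one exploits the continuity of $f$ and the finiteness of the subdivision to glue the bounds while retaining the worst (smallest) exponent $1/\kappa(n,d)$. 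I expect this gluing, and in particular the verification that the explicit exponent survives patching across pieces whose nearest solution point may lie in a neighbouring piece, to be the main technical obstacle.

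It remains to globalize. Under hypothesis (ii) the function $f$ is convex, so once the local bound on the sublevel set $[f\le\delta]$ is in hand I would apply Facchinei and Pang's extension result, Theorem~\ref{FacPang}, with $\theta=1/\kappa(n,d)$; its proof moreover shows that for $f(x)>\delta$ the bound is linear, yielding precisely
\[
\dist(x,[f\le 0])\le c\,\bigl([f(x)]_++[f(x)]_+^{1/\kappa(n,d)}\bigr),\qquad \forall x\in\R^n.
\]
Under hypothesis (i), convexity is unavailable, so I would instead invoke the semialgebraic characterization of Theorem~\ref{cond}: hypothesis (i) is exactly condition (ii) of statement (1) there, while condition (i) of that statement ($f(x_k)\to 0\Rightarrow\dist(x_k,[f\le 0])\to 0$) follows from the near-solution bound just established. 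This already produces a H\"older global error bound; to pin the exponents down to $1$ and $1/\kappa(n,d)$ I would combine the near-solution estimate on $[f\le\delta]$, a linear far-field estimate on $[f\ge R]$ obtained by applying the convex-polynomial bound on each unbounded piece (where $f$ restricts to a convex polynomial) and using the growth hypothesis to control the transition between pieces, and the boundedness of $\dist(\cdot,[f\le 0])$ on the intermediate band $[\delta<f<R]$ guaranteed by that same growth hypothesis.
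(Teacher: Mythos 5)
First, a point of comparison: the paper itself states this theorem without proof (it is quoted directly from \cite{Li}), so your proposal can only be judged on its own merits. Your overall architecture is sensible --- a near-solution bound with exponent $1/\kappa(n,d)$, then globalization via Theorem~\ref{FacPang} under convexity and via a band decomposition plus Theorem~\ref{cond} under hypothesis (i) --- but there is a genuine gap at the core: the near-solution bound is never actually proved. Applying (\ref{Li10}) or Theorem~\ref{Yang08} to the piece $P_j$ bounds $\dist(x,[f_j\le 0])$, where $[f_j\le 0]$ is the sublevel set of the polynomial $f_j$ over \emph{all} of $\R^n$; this is neither $\dist(x,[f\le 0])$ nor even $\dist\left(x,[f\le 0]\cap P_j\right)$. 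The two sets can be unrelated: $[f_j\le 0]$ may lie mostly or entirely outside $P_j$, where $f\ne f_j$, and conversely $[f\le 0]\cap P_j$ may be empty while $[f_j\le 0]$ is not. So the per-piece estimates do not estimate the quantity of interest, and the ``gluing'' step you yourself flag as the main technical obstacle --- points whose nearest solution lies in a neighbouring piece --- is precisely the mathematical content of Li's theorem; ``exploiting continuity and the finiteness of the subdivision'' restates the goal rather than proving it. Nor can Theorem~\ref{loja} repair this, since its exponent is an unquantified $a>0$ and cannot deliver the specific value $1/\kappa(n,d)$.

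The same defect reappears under hypothesis (i). Theorem~\ref{cond} yields, as you concede, only \emph{some} exponents $a,b>0$; your plan for upgrading them again applies the convex-polynomial bound on each unbounded piece, which bounds the distance to the wrong set (and on a piece where $f>0$ the set $[f_j\le 0]$ may simply be empty, e.g.\ $f_j(x)=x_1^2+1$, so there is nothing to apply). What hypothesis (i) genuinely provides is qualitative: for every $M>0$ there is $C_M$ with $\dist(x,[f\le 0])\le C_M$ whenever $f(x)\le M$. This handles your intermediate band $[\delta<f<R]$, where one gets $\dist(x,[f\le 0])\le (C_R/\delta)\,f(x)$, but it gives no linear rate on $[f\ge R]$; ``using the growth hypothesis to control the transition between pieces'' is again a restatement of the difficulty. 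To be fair, the globalization under (ii) is correct as stated: once a bound $\dist(x,[f\le 0])\le c_1[f(x)]_+^{1/\kappa(n,d)}$ on $[f\le\delta]$ is granted, Theorem~\ref{FacPang} with $\theta=1/\kappa(n,d)$ does finish that case. But as it stands, both the near-field and the far-field estimates --- everything except the reduction steps --- are assumed rather than proved, so the proposal presupposes the hard part of the theorem.
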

 Let us now present a global error bound for convex polynomial function systems. In \cite{LuoLuo}, under the Slater condition, Luo and Luo proved that a global Lipschitzian error bound holds for convex quadratic systems. After that, without the Slater condition, Pang and Wang in \cite{Pangwang}, showed that any systems of convex quadratic has a global error bound. 
\begin{theorem}\cite{Pangwang}
Let $f_1, f_2 \ldots, f_m$ be convex quadratic functions. Assume that 
	$$S=\left\lbrace x\in \R^n|f_i(x)\leq 0, i=1,\ldots m \right\rbrace$$
is not empty, then there exists a positive integer $\dist\leq n+1$ and a scalar $c>0$ such that 
	$$\dist(x,S)\leq c\max \left( \|[f(x)]_+\|, \|[f(x)]_+\|^\frac{1}{2^d}\right), \forall x\in \R^n,$$
where $f(x)=(f_i(x))_{i=1\ldots, m}, \, \forall x\in \R^n$. 

\smallskip

Furthermore, if $S$ contains an interior point, then $d=0$. 
\end{theorem}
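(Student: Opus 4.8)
The plan is to work with the convex surrogate $g(x)=\max_{i=1,\ldots,m}[f_i(x)]_+$, which is nonnegative, lower semicontinuous and convex, satisfies $[g\le 0]=S$, and is comparable up to constants to $\|[f(x)]_+\|$ (all finite-dimensional norms being equivalent on the nonnegative orthant). Since $\max(t,t^{1/2^{d}})$ and $t+t^{1/2^{d}}$ agree up to the factor $2$, it suffices to find $\tau,\delta>0$ and an integer $d\le n+1$ with $\dist(x,S)\le\tau\big(g(x)+g(x)^{1/2^{d}}\big)$ on the level set $[g\le\delta]$: indeed, Theorem~\ref{FacPang} applied to the convex function $g$ with $\theta=1/2^{d}$ would then upgrade this level-set estimate to the global one on all of $\R^n$, and would also explain the two powers in the $\max$, the H\"older term governing the near regime $g(x)\le 1$ and the linear term the far regime $g(x)\ge 1$, exactly the dichotomy of Theorem~\ref{cond}. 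The exponent $1/2^{d}$ would be produced by an induction on the ambient dimension $n$.

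The base case is the Slater case. If $S$ has an interior point, then $g$ satisfies the Slater condition, and the linear global error bound for convex quadratic systems of Luo and Luo \cite{LuoLuo} (equivalently, Theorem~\ref{Linearconv} through the strong Slater condition) yields $\dist(x,S)\le\tau\,g(x)$ for all $x$. This is precisely the case $d=0$, which simultaneously settles the final assertion of the statement. When Slater fails, the induction step would perform a facial reduction: there is at least one \emph{implicit equality} index $i_0$, that is $f_{i_0}\equiv 0$ on $S$, while the constraints that are not implicit equalities can be made strictly negative somewhere on $S$.

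The heart of the argument is to quantify how much the exponent degrades under this reduction. When the convex quadratic $f_{i_0}$ attains its minimum value $0$, the set $L:=\argmin f_{i_0}=[f_{i_0}=0]$ is a proper affine subspace of $\R^n$, and the quadratic growth of $f_{i_0}$ off $L$ delivers a \emph{uniform, global} square-root bound $\dist(x,L)\le c\,\sqrt{f_{i_0}(x)}\le c\,\sqrt{g(x)}$. Restricting the remaining constraints to $L$ produces a convex quadratic system, with the same feasible set $S$, in an ambient space of dimension $\dim L\le n-1$; the induction hypothesis would furnish an exponent $1/2^{d'}$ with $d'\le\dim L+1\le n$, and composing $\dist(x,L)\le c\sqrt{g(x)}$ with the reduced bound along $L$ squares the inner exponent to $1/2^{d'+1}$ with $d'+1\le n+1$. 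Each reduction strictly lowers the dimension, so at most $n+1$ levels occur, which is the source of the bound $d\le n+1$.

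The main obstacle is twofold and sits exactly at the two places where the clean affine picture breaks. First, an implicit equality $f_{i_0}$ may have minimum strictly below $0$, so that $[f_{i_0}=0]$ is a curved quadric rather than an affine subspace; here one must show that at feasible points $\nabla f_{i_0}$ is nonzero, so that the reduction onto the resulting smooth hypersurface is \emph{linear} and incurs no square-root loss, thereby confining the doubling of the exponent to the tangential, minimum-attaining case and keeping the total number of square roots bounded by the dimension. Second, and more delicate, the level set $[g\le\delta]$ is in general unbounded, so the comparison of $g(x)$ with $g$ of its projection onto $L$ cannot rest on a global Lipschitz constant; I would control this through the recession cone, exploiting that on $[g\le\delta]$ the transverse displacement $\dist(x,L)$ stays bounded and that every recession direction of the level set lies inside $L$, so the quadratic growth of $g$ is confined to a bounded transverse slab. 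Propagating uniform constants through the induction across this unbounded region is where the real work lies; once it is in place, Theorem~\ref{FacPang} completes the passage to the global estimate.
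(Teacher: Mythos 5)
The paper itself offers no proof of this statement---it is quoted, as a survey item, directly from Pang and Wang \cite{Pangwang}---so your proposal has to be judged on its own merits and against the published argument. Its broad architecture is indeed the right one: facial reduction by implicit equalities, one square root lost per reduction step, the Slater/interior-point situation as the base case $d=0$ (via Luo--Luo \cite{LuoLuo}), and globalization of a level-set bound through Theorem~\ref{FacPang}. But the two places you yourself identify as delicate are genuine gaps, and the first is not a missing detail but a false claim. You assert that when an implicit equality $f_{i_0}$ has $\min f_{i_0}<0$, so that $[f_{i_0}=0]$ is a curved quadric with nonvanishing gradient on $S$, the reduction is ``linear'' and ``incurs no square-root loss'', so that exponent-doubling happens only in the minimum-attaining case. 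Consider $f_1(x,y)=x^2+y^2-1$ and $f_2(x,y)=x^2+(y-2)^2-1$: two convex quadratics, each with minimum $-1<0$ and nonzero gradient at the unique feasible point, $S=\{(0,1)\}$, and both are implicit equalities. Along the line $y=1$ one has $f_1(x,1)=f_2(x,1)=x^2$ while $\dist\bigl((x,1),S\bigr)=|x|$, so the optimal exponent for this system is $1/2$, not $1$; your accounting would predict $d=0$. The square-root loss is caused by tangency between the two constraint sets, not by whether an individual zero set is affine or curved. Moreover, your curved-case reduction cannot feed the induction at all: restricting the remaining constraints to a curved quadric does not yield a convex quadratic system on a lower-dimensional affine space, which is exactly what the induction hypothesis requires. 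The classical arguments circumvent both problems by devices absent from your plan: passing to the affine hull of $S$ (an implicit equality is a polynomial vanishing on a set with nonempty interior relative to that hull, hence it vanishes on the whole hull), and summing implicit equalities---in the example $f_1+f_2=2\bigl(x^2+(y-1)^2\bigr)$ is a convex quadratic attaining its minimum $0$ exactly on $S$, which is what actually produces the square-root bound.

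The second gap you concede (``where the real work lies'') but do not close, and it is where the uniformity claimed in the theorem lives. In the minimum-attaining step you must compare residuals at $x$ with residuals at the projection $x'=P_Lx$. Quadratics are not globally Lipschitz: the available estimate is $|f_i(x')-f_i(x)|\le C(1+\|x\|)\,\|x-x'\|$, and although quadratic growth of $f_{i_0}$ does give $\|x-x'\|\le c\sqrt{\delta}$ on the level set $[g\le\delta]$ (this is all your recession-cone remark yields), the factor $\|x\|$ is unbounded on that level set, so the error term $\|x\|\sqrt{g(x)}$ is not dominated by any power of $g(x)$. Controlling this is precisely the asymptotic analysis that occupies most of \cite{Pangwang}; without it the induction step does not close and no constant $c$ uniform over $\R^n$ is obtained. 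In sum, your proposal is a plausible road map whose base case and bookkeeping of $d\le n+1$ are fine, but whose two load-bearing steps are, respectively, refuted by a planar two-constraint example and left unproved.
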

Similarly Theorem~\ref{Ngaithera5}, the latter result is extended to the Banach space in \cite[Theorem 7]{NgaiThera05}, with  
$$f_i(x)=\frac{1}{2} \langle A_ix,x\rangle+ \langle B_i,x\rangle +c_i,$$
where  $A_i\colon X\times X \rightarrow \R$ is a symmetric continuous bilinear and semi-definite positive, $B_i\in X^*$ and $c_i\in\R$, for $i=1,\ldots,m$.

Note that this result does not hold for a general convex polynomial function system, see Example~\ref{Slater:Ex}. However, in some particular cases, the global error bound hold for such systems.
\begin{theorem}
Let $f_1,\ldots,f_p$ be convex polynomial functions on $\R^n$ whose degrees are at most $d$. Let $f(x)=\max_{i=1,\ldots,m} f_i(x),\,\forall x\in\R^n$. Then, the following statements are hold
	\begin{enumerate}
\item \cite{Li10} If $f_i(x)\geq 0, \forall x\in \R^n, i=1,\ldots,m$ then there exists $\tau>0$ such that
$$\dist(x,[f\leq 0])\leq \tau\left( [f(x)]_++[f(x)]_+^\frac{1}{\kappa(n,d)}\right), \forall x\in \R^n.$$
\item \cite{Ngai} If $S=\{x\in K| f(x)\leq 0 \}$ is a nonempty compact set, where $K$ is a convex polyhedral in $\R^n$. Then, there exists $\tau>0$ such that
		$$\dist(x,S)\leq c\left( [f(x)]_++[f(x)]_+^\frac{1}{\kappa(n,2d)}\right), \forall x\in K.$$
\item \cite{Ngai}Let  $K$ is a convex polyhedral in $\R^n$ and $S=\{x\in K| f(x)\leq 0 \}$ is  nonempty. Assume that, for each $v\in K^\infty$: $\max_{i=1,\ldots,p} f_i^\infty (v)=0\Rightarrow f_i^\infty(v)=0$ (see (\ref{c1:recession})). Then, there exists $\tau>0$ such that
$$\dist(x,S)\leq c\left( [f(x)]_++[f(x)]_+^\frac{1}{\kappa(n,2d)}\right), \forall x\in K,$$
where $K^\infty$ is recession cone of $K$, defined by 
$$K^\infty=\left\lbrace v\in\R^n| x+tv\in K, \forall t>0, x\in K \right\rbrace.$$
\end{enumerate}
\end{theorem}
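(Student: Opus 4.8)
The plan is to handle the three parts separately, reducing each to machinery already assembled in the excerpt; the only genuinely new work sits in the constrained cases, where the exponent $1/\kappa(n,2d)$ must be produced.

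For part (1) I would pass from the maximum to a sum. Set $g=\sum_{i=1}^{p}f_i$. Since each $f_i$ is convex of degree at most $d$ and nonnegative, $g$ is a convex polynomial of degree at most $d$ with $g\ge 0$, and the elementary comparison for nonnegative reals gives
$$ f(x)\le g(x)\le p\,f(x),\qquad \forall x\in\R^n. $$
In particular $[g\le 0]=[f\le 0]$ and $[g(x)]_+\le p\,[f(x)]_+$. Applying the single convex polynomial global error bound (\ref{Li10}) --- itself a consequence of Theorem~\ref{Kolla} and Theorem~\ref{FacPang}, or of the sharper Theorem~\ref{Yang08} --- to $g$, and then substituting $[g(x)]_+\le p\,[f(x)]_+$, yields
$$ \dist(x,[f\le 0])\le \tau\bigl([f(x)]_+ +[f(x)]_+^{1/\kappa(n,d)}\bigr),\qquad \forall x\in\R^n, $$
after absorbing the powers of $p$ into the constant. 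This part is routine.

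For parts (2) and (3) I would first encode the polyhedral constraint. Writing $K=\{x:\ell_j(x)\le 0,\ j=1,\dots,q\}$ with each $\ell_j$ affine, the set $S$ is exactly the zero/sublevel set of the convex polynomial system $\{f_1,\dots,f_p,\ell_1,\dots,\ell_q\}$, all of degree at most $d$. The first and decisive step is a \emph{local} Hölder error bound: on a bounded neighbourhood of $S$ one should obtain $\dist(x,S)\le \tau_1\,[f(x)]_+^{1/\kappa(n,2d)}$ for $x\in K$ near $S$. The degree doubling $d\mapsto 2d$ enters through the squaring device used for Theorem~\ref{LMP2} (and in Luo--Pang, Luo--Luo): one replaces the system by $\sum_i[f_i]_+^2+\sum_j[\ell_j]_+^2$, a semialgebraic function of degree $2d$ whose zero set is $S$, and then feeds the \L ojasiewicz\ gradient inequality for maxima of polynomials (Lemma~\ref{Loja_poly}) into Corollary~\ref{localnonlinear}, exactly as in the proof of Theorem~\ref{LMP1}. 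Producing the \emph{precise} exponent $1/\kappa(n,2d)$, rather than an unspecified H\"older exponent coming from Theorem~\ref{loja}, is the technical heart of Ngai's argument and the main obstacle; it rests on the sharp Kollár-type \L ojasiewicz-exponent estimates for convex polynomials.

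The final step promotes this local bound to all of $K$, and this is where the hypotheses differ. For part (2), compactness of $S$ means $S^\infty=\{v\in K^\infty: f^\infty(v)\le 0\}=\{0\}$, so $f$ grows along every nonzero recession direction of $K$; this is precisely the special case in which the recession hypothesis of part (3) holds automatically, since then $\max_i f_i^\infty(v)=0$ can occur only at $v=0$. For part (3) the stated implication $\max_i f_i^\infty(v)=0\Rightarrow f_i^\infty(v)=0$ supplies the weaker control needed along the \emph{flat} recession directions, where $f$ does not grow: it forces such $v$ to lie in $S^\infty$, so moving along them leaves $\dist(x,S)$ bounded. Using convexity of $f$ and a Facchinei--Pang-type scaling along the segment from $x\in K$ to its projection onto $S$ (as in the proof of Theorem~\ref{FacPang}, the convexity of $K$ keeping the segment in $K$), together with this recession dichotomy to control $x$ far from $S$, one extends the local estimate to
$$ \dist(x,S)\le c\bigl([f(x)]_+ +[f(x)]_+^{1/\kappa(n,2d)}\bigr),\qquad \forall x\in K, $$
the linear term absorbing the far-field behaviour and the H\"older term the behaviour near $S$. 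I expect the sharp local exponent to be the delicate point, with the handling of the flat recession directions under hypothesis (3) the secondary subtlety distinguishing it from the compact case (2), while the convex scaling extension is comparatively mechanical.
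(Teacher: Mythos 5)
A preliminary remark: the paper contains no proof of this theorem at all --- it is quoted as a survey item from \cite{Li10} and \cite{Ngai} and is immediately followed by the bibliography --- so your proposal has to stand entirely on its own. Part (1) does stand: the reduction $g=\sum_{i=1}^{p}f_i$, with $f\le g\le p\,f$ and $[g\le 0]=[f\le 0]$ (both using nonnegativity of the $f_i$), followed by an application of the single-polynomial bound (\ref{Li10}) to $g$, is correct and complete. Your observation that (2) is the special case of (3) in which the recession hypothesis holds vacuously (compactness of $S$ forces $S^\infty=\{0\}$) is also correct, so it is enough to discuss (3).

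For parts (2)--(3) there are two genuine gaps. The first is the exponent. Lemma~\ref{Loja_poly} is a statement about \emph{maxima of polynomials}; the function $\sum_i[f_i]_+^2+\sum_j[\ell_j]_+^2$ is not one (it is $C^1$, and its natural pieces are not polyhedral), so ``feeding it into Lemma~\ref{Loja_poly}'' is undefined. If instead you run the paper's machinery on the system $\{f_1,\dots,f_p,\ell_1,\dots,\ell_q\}$ as in Theorem~\ref{LMP1} or Theorem~\ref{LMP2}, what comes out are D'Acunto--Kurdyka-type exponents, $1/R(n+p+q,d+1)$ or $2/R(\,\cdot\,,2d)$ with $R(n,d)=d(3d-3)^{n-1}$, whereas the claimed exponent $1/\kappa(n,2d)$ with $\kappa(n,2d)=(2d-1)^n+1$ is of Gwo\'zdziewicz--Koll\'ar type. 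The only $\kappa$-type results in the paper ((\ref{Li10}), Theorem~\ref{Yang08}, and the piecewise result of \cite{Li}) concern a \emph{single} (piecewise) convex polynomial, and your proposal never exhibits a reduction of the constrained system to that setting in degree $2d$. You concede the point yourself (``rests on the sharp Koll\'ar-type estimates''), but that estimate \emph{is} the content of Ngai's theorem; deferring it leaves (2) and (3) unproved.

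The second gap is the local-to-global step, and here the argument would prove too much. What you extract from hypothesis (3) is that every flat direction ($v\in K^\infty$ with $\max_i f_i^\infty(v)=0$) lies in $S^\infty$. But that is automatic for \emph{any} convex system with $S\ne\emptyset$: $\max_i f_i^\infty(v)=0$ already gives $f_i^\infty(v)\le 0$ for every $i$, hence, by the definition (\ref{c1:recession}), $f_i(x+tv)\le f_i(x)\le 0$ for $x\in S$ and $t>0$, i.e.\ $v\in S^\infty$ --- no use of the hypothesis is made. So if ``growth directions are controlled by growth, and flat directions leave $\dist(\cdot,S)$ bounded'' were a sufficient dichotomy, every convex polynomial system over a polyhedron would admit a H\"older global error bound; Example~\ref{Slater:Ex} (property (iii)) refutes exactly this, since there points of $[f\le 1]$ lie arbitrarily far from $[f\le 0]$. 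The true force of hypothesis (3) is strictly stronger than what you use: along a flat direction of the maximum, no individual $f_i$ may decrease to $-\infty$. In Example~\ref{Slater:Ex} this fails along $v=e_4$, where $f(te_4)=\max\{0,-t\}=0$ stays flat while $f_2(te_4)=-t\to-\infty$; the error bound is then destroyed along curved sequences escaping to infinity, not along any fixed direction, and a correct proof must deploy hypothesis (3) quantitatively against such sequences. Your proposal never does this, so the extension step, like the exponent, remains unproved.
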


\bibliographystyle{plain}
\bibliography{Bibli}
\end{document}